\newcommand{\footnoteremember}[2]{
    \footnote{#2}%
    \newcounter{#1}
    \setcounter{#1}{\value{footnote}}
}
\newcommand{\footnoterecall}[1]{\footnotemark[\value{#1}]}
\newcommand{\minimize}{\mathop{\mathrm{minimize}{}}}
\newcommand{\argmin}{\mathop{\mathrm{arg\,min}{}}}
\newcommand{\dom}{\mathrm{dom\,}}
\newcommand{\rint}{\mathrm{rint\,}}
\newcommand{\reals}{\mathbb{R}}
\newcommand{\nats}{\mathbb{N}}
\newcommand{\incfac}{\rho}
\newcommand{\decfac}{\rho}
\newcommand{\zz}{\tilde{z}}
\newcommand{\DKL}{{D_\mathrm{KL}}}
\newcommand{\DIS}{{D_\mathrm{IS}}}
\newcommand{\gammain}{\gamma_{\mathrm{in}}}
\newcommand{\eigmax}{{\lambda_\mathrm{max}}}
\newtheorem{proposition}{Proposition}
\newtheorem{definition}{Definition}
\newtheorem{theorem}{Theorem}
\newtheorem{lemma}{Lemma}
\newtheorem{assumption}{Assumption}
\title{Accelerated Bregman proximal gradient methods for relatively smooth convex optimization}
\author{
Filip Hanzely\footnoteremember{KAUST}{
Division of Computer, Electrical and Mathematical Sciences and Engineering
(CEMSE), King Abdullah University of Science and Technology (KAUST), Thuwal, 
Kingdom of Saudi Arabia.
Emails: \texttt{filip.hanzely@kaust.edu.sa}, 
\texttt{peter.richtarik@kaust.edu.sa}}%
\footnote{Currently at Toyota Technological Institute at Chicago (TTIC).
Email: \texttt{filip@ttic.edu}}
\and 
Peter Richt{\'a}rik \footnoterecall{KAUST}
\footnote{School of Mathematics, The University of Edinburgh, Edinburgh, 
United Kingdom. }
\footnote{Moscow Institute of Physics and Technology, Dolgoprudny, Russia.} 
\and 
Lin Xiao \footnote{Work done while at Microsoft Research, Redmond, Washington, United States.
Email: \texttt{lin.xiao@gmail.com}}
}
\date{}
\begin{document}
\maketitle

\begin{abstract} 
We consider the problem of minimizing the sum of two convex functions: one is differentiable and relatively smooth with respect to a reference convex function, and the other can be nondifferentiable but simple to optimize.
We investigate a triangle scaling property of the Bregman distance generated by the reference convex function and present accelerated Bregman proximal gradient (ABPG) methods that attain an $O(k^{-\gamma})$ convergence rate, where $\gamma\in(0,2]$ is the \emph{triangle scaling exponent} (TSE) of the Bregman distance. 
For the Euclidean distance, we have $\gamma=2$ and recover the convergence rate of Nesterov's accelerated gradient methods. 
For non-Euclidean Bregman distances, the TSE can be much smaller (say $\gamma\leq 1$), but we show that a relaxed definition of \emph{intrinsic} TSE is always equal to~2. 
We exploit the intrinsic TSE to develop adaptive ABPG methods that converge much faster in practice. 
Although theoretical guarantees on a fast convergence rate seem to be out of reach in general, our methods obtain empirical $O(k^{-2})$ rates in 
numerical experiments on several applications
and provide posterior numerical certificates for the fast rates. 

\paragraph{Keywords:} convex optimization, relative smoothness, Bregman divergence, proximal gradient methods, accelerated gradient methods.
\end{abstract} 

\section{Introduction}
Let $\reals^n$ be the $n$-dimensional real Euclidean space endowed with 
inner product $\langle x, y\rangle=\sum_{i=1}^n x^{(i)} y^{(i)}$ 
and the Euclidean norm $\|x\|=\sqrt{\langle x, x\rangle}$.
We consider optimization problems of the form 
\begin{equation}\label{eqn:composite-min}
\minimize_{x\in C} ~\bigl\{ F(x) := f(x) + \Psi(x) \bigr\}, 
\end{equation}
where $C$ is a closed convex set in~$\reals^n$, 
and~$f$ and~$\Psi$ are proper, closed convex functions.
We assume that~$f$ is differentiable on an open set that contains 
the relative interior of~$C$ (denoted as $\rint C$).
For the development of first-order methods, 
we also assume that~$C$ and~$\Psi$ are \emph{simple}, 
whose precise meaning will be explained in the context of specific algorithms.

First-order methods for solving~\eqref{eqn:composite-min} are often based
on the idea of minimizing a simple approximation of the objective~$F$ 
during each iteration. 
Specifically, in the \emph{proximal gradient method}, we start with an initial 
point $x_0\in\rint C$ and generate a sequence $x_k$ for $k=1,2,\ldots$ with
\begin{equation}\label{eqn:prox-grad}
x_{k+1} = \argmin_{x\in C} \Bigl\{ f(x_k) + \langle \nabla f(x_k), x-x_k\rangle
+\frac{L_k}{2}\|x-x_k\|^2 + \Psi(x) \Bigr\} ,
\end{equation}
where $L_k>0$ for all $k\geq 0$. 
Here, we use the gradient $\nabla f(x_k)$ to construct a local
quadratic approximation of $f$ around $x_k$ while leaving $\Psi$ untouched.
Our assumption that $C$ and $\Psi$ are simple means that the minimization
problem in~\eqref{eqn:prox-grad} can be solved efficiently, especially if it
admits a closed-form solution.

Assuming that $F$ is bounded below, 
convergence of the proximal gradient method can be established if 
$F(x_{k+1})\leq F(x_k)$ for all $k\in\nats$.
A sufficient condition for this to hold is that the quadratic approximation 
of~$f$ in~\eqref{eqn:prox-grad} is an upper approximation (majorization).
This is the basic idea behind many general methods for nonlinear optimization.
To this end, a common assumption is for the gradient of~$f$
to satisfy a uniform Lipschitz condition, i.e., 
there exists a constant $L_f$ such that
\begin{equation}\label{eqn:uniform-smooth}
\|\nabla f(x)-\nabla f(y)\| \leq L_f \|x-y\|, \quad \forall\, x,y\in\rint C.
\end{equation}
This smoothness assumption implies (see, e.g., \cite[Lemma~1.2.3]{Nesterov04book})
\begin{equation}\label{eqn:quadratic-upper-bound}
f(x)\leq f(y) + \langle\nabla f(y), x-y\rangle + \frac{L_f}{2}\|x-y\|^2,
\quad \forall\, x\in C, ~y\in\rint C.
\end{equation} 
Therefore, setting $L_k=L_f$ for all $k\in\nats$ 
ensures that the quadratic approximation of~$f$ in~\eqref{eqn:prox-grad}
is always an upper bound of~$f$, which implies $ F(x_{k+1})\leq F(x_k)$
for all $k\in\nats$. Moreover, it can be shown that the proximal gradient 
method enjoys an $O(k^{-1})$ convergence rate, i.e., 
\begin{equation}\label{eqn:PGM-rate}
     F(x_k) - F(x) \leq \frac{L_f}{k}\frac{\|x-x_0\|^2}{2}, 
    \quad \forall\, x\in C.
\end{equation}
See, e.g., \cite{BeckTeboulle09fista}, 
\cite{Nesterov13composite} and \cite[Chapter~10]{Beck17book}.
Under the same assumption, accelerated proximal gradient methods  
(\cite{Nesterov83,Nesterov04book,AuslenderTeboulle06,BeckTeboulle09fista,Tseng08,Nesterov13composite})
can achieve a faster $O(k^{-2})$ convergence rate:
\begin{equation}\label{eqn:accl-PGM-rate}
     F(x_k) - F(x) \leq \frac{4L_f}{(k+2)^2}\frac{\|x-x_0\|^2}{2}, 
    \quad \forall\, x\in C,
\end{equation}
which is optimal (up to a constant factor) for this class of convex 
optimization problems \cite{NemirovskiYudin83,Nesterov04book}.

\subsection{Optimization of relatively smooth functions}
\label{sec:relative-smooth}

While the uniform smoothness condition~\eqref{eqn:uniform-smooth} is 
central in the development and analysis of first-order methods, 
there are many applications where the objective function
does not have this property, despite being convex and differentiable. 
For example, in D-optimal experiment design 
(e.g., \cite{KieferWolfowitz59,Atwood69})
and Poisson inverse problems (e.g., \cite{Csiszar91,Bertero09}),
the objective functions involve the logarithm in the form of
log-determinant or relative entropy, 
whose gradients may blow up towards the boundary of the feasible region.
In order to develop efficient first-order algorithms for solving such
problems, the notion of \emph{relative smoothness} was introduced 
by several recent works 
\cite{BauschkeBolteTeboulle17,LuFreundNesterov18,ZhouLiangShen19}.

Let $h$ be a strictly convex function that is differentiable on an open set containing $\rint C$.
The Bregman distance associated with~$h$,
originated in \cite{Bregman67} and popularized by \cite{CensorLent1981iterative,CensorZenios1992proximal}, is defined as
\[
D_h(x,y) := h(x) - h(y) - \langle \nabla h(y), x-y\rangle, 
\quad \forall\, x\in \dom h, ~y\in \rint\dom h.
%\quad \forall\, x,y\in \dom h.
\]
\begin{definition}\label{def:relative-smooth}
The function~$f$ is called $L$-\emph{smooth relative} to~$h$ on~$C$ 
if there is an $L>0$ such that 
\begin{equation}\label{eqn:relative-smooth}
f(x)\leq f(y) + \langle\nabla f(y), x-y\rangle + L D_h(x,y),
\quad \forall\, x\in C, ~y\in\rint C.
\end{equation}
\end{definition}
As shown in \cite{BauschkeBolteTeboulle17} and \cite{LuFreundNesterov18}, 
this notion of relative smoothness is equivalent to the following statements:
\begin{itemize} \itemsep 0pt
    \item $Lh-f$ is a convex function on~$C$.
    \item If both $f$ and $h$ are twice differentiable, then 
        $\nabla^2 f(x) \preceq L\, \nabla^2 h(x)$ for all $x\in\rint C$.
    \item $\langle\nabla f(x)-\nabla f(y),x-y\rangle \leq L\langle 
        \nabla h(x)-\nabla h(y), x-y\rangle$ for all $x,y\in\rint C$.
\end{itemize}

The definition of relative smoothness in~\eqref{eqn:relative-smooth} gives an 
upper approximation of~$f$ that is similar to~\eqref{eqn:quadratic-upper-bound}.
In fact, \eqref{eqn:quadratic-upper-bound} is a special case 
of~\eqref{eqn:relative-smooth} with $h=(1/2)\|x\|^2$ and 
$D_h(x,y)=(1/2)\|x-y\|^2$.
Therefore it is natural to consider a more general algorithm by replacing
the squared Euclidean distance in~\eqref{eqn:prox-grad} with a 
Bregman distance:
\begin{equation}\label{eqn:BPG}
x_{k+1} = \argmin_{x\in C}\,\Bigl\{ f(x_k)+\langle\nabla f(x_k), x-x_k\rangle
+ L_k D_h(x,x_k) + \Psi(x) \Bigr\} .
\end{equation}
Here, our assumption that $C$ and $\Psi$ are simple means that the minimization
problem in~\eqref{eqn:BPG} can be solved efficiently. 
Similar to the proximal gradient method~\eqref{eqn:prox-grad}, this algorithm
can also be interpreted through operator splitting mechanism: 
it is the composition of a Bregman proximal step and a Bregman gradient step
(see details in \cite[Section~3.1]{BauschkeBolteTeboulle17}).
Therefore, it is called the \emph{Bregman proximal gradient} (BPG) method
\cite{Teboulle18simplied}.

Under the relative smoothness condition~\eqref{eqn:relative-smooth}, setting
$L_k=L$ ensures that the function being minimized in~\eqref{eqn:BPG} is a
majorization of~$ F$, which implies $ F(x_{k+1})\leq F(x_k)$ for all
$k\in\nats$.
It was first shown in~\cite{BDX:11} (for the case $\Psi\equiv 0$) 
that the BGD method has a $O(k^{-1})$ convergence rate:
\[
     F(x_k) - F(x) \leq \frac{L}{k}D_h(x,x_0), 
    \quad \forall\, x\in \dom h.
\]
This is a generalization of~\eqref{eqn:PGM-rate}.
The same convergence rate for the general case (with nontrivial $\Psi$)
is obtained in \cite{BauschkeBolteTeboulle17}, where the authors also 
discussed the effect of a symmetry measure for the Bregman distance.
Similar results are also obtained in \cite{LuFreundNesterov18} and
\cite{ZhouLiangShen19}. In addition, \cite{LuFreundNesterov18} 
introduced the notion of relative strong convexity and obtained linear
convergence of the BPG method when both relative smoothness and 
relative strong convexity hold.
More recently, \cite{hanzely2018fastest} studied stochastic gradient descent 
and randomized coordinate descent methods in the relatively smooth setting,
and \cite{Lu17rel_continuity} extended this framework to
minimize relatively continuous convex functions.

A natural question is whether the $O(k^{-1})$ rate can be improved with first-order methods under the relative smoothness assumption, especially whether the accelerated $O(k^{-2})$ rate can be achieved
\cite{LuFreundNesterov18,Teboulle18simplied}. 
Very recently, it is shown by Dragomir et al.~\cite{Dragomir2019} that the $O(k^{-1})$ rate is optimal for the class of relatively smooth functions, thus cannot be improved in general.
However, we note that the class of relatively smooth functions is very broad, containing differentiable functions whose gradients has arbitrarily large Lipschitz constants. Indeed, the worst-case function constructed in \cite{Dragomir2019} to prove the lower bound is obtained by smoothing a nonsmooth function, which demonstrate pathological nonsmooth behavior.
This is in sharp contrast to the situation under the uniform Lipschitz assumption, which uses a \emph{fixed} quadratic function as the relatively smooth measure.

Ideally, it would be most informative to derive both upper and lower bounds on the convergence rate of first-order methods for every \emph{fixed} function~$h$ in the relatively smooth setting, or at least for the popular ones that are frequently encountered in application (such as the KL divergence).
It is plausible that the achievable convergence rates for particular functions~$h$ (more likely particular combinations of~$f$ and~$h$) can be better than $O(k^{-1})$ in theory or at least in practice.
A full spectrum investigation is beyond the scope of this paper. Instead, we study a structural property of general Bregman divergences called \emph{triangle scaling} and develop adaptive first-order methods that, although without a priori guarantee, often demonstrate the $O(k^{-2})$ convergence rate empirically in many applications. Moreover, these methods produce simple \emph{numerical certificates} of the fast rates whenever they happen.

\subsection{Contributions and outline}

First, in Section~\ref{sec:triangle-scaling}, we study a triangle-scaling
property for general Bregman distances and define the triangle-scaling exponent (TSE)~$\gamma>0$, which is key in characterizing the convergence rates of first-order methods in the relatively smooth setting.
We estimate the value of~$\gamma$ for several Bregman distances that appear 
frequently in applications. 
Moreover, we define an intrinsic triangle-scaling exponent $\gammain$ and show that $\gammain=2$ for all~$h$ that is twice continuously differentiable.

In Section~\ref{sec:ABPG}, we propose a basic accelerated Bregman proximal gradient (ABPG) method that attains an $O(k^{-\gamma})$ convergence rate, 
where $\gamma\leq 2$ is the TSE of the Bregman divergence.
More specifically, under the assumption~\eqref{eqn:relative-smooth},
the basic ABPG method produces a sequence $\{x_k\}$ satisfying
\begin{equation}\label{eqn:ABPG-rate}
 F(x_k)- F(x)\leq\left(\frac{\gamma}{k+\gamma}\right)^\gamma L D_h(x,x_0), 
\quad \forall\, x\in \dom h.
\end{equation}
The exact value of~$\gamma$ depends on a \emph{triangle scaling property}
of the Bregman distance. For $D_h(x,y)=(1/2)\|x-y\|^2$, 
we have $\gamma=2$ and $L=L_f$, hence the result in~\eqref{eqn:ABPG-rate} 
recovers that in~\eqref{eqn:accl-PGM-rate}.
We also give an adaptive variant that can automatically search for the
largest possible~$\gamma$ for which the convergence rate
in~\eqref{eqn:ABPG-rate} holds for finite~$k$ even though~$\gamma$ 
is larger than the TSE.

In Section~\ref{sec:ABPG-gain}, we develop an adaptive ABPG method that 
automatically adjust an additional gain factor in order to work with the 
intrinsic TSE $\gammain=2$.
If the geometric mean of the gains obtained at all the iterations up to~$k$ 
is a small constant, say $O(1)$, then they constitute numerical certificates that the algorithm has enjoyed an empirical $O(k^{-2})$ convergence rate.

In Section~\ref{sec:ABDA}, we present an accelerated Bregman dual-averaging 
algorithm that has similar convergence rates as the basic ABPG method, 
but omit discussions of its adaptive variants.

Finally, in Section~\ref{sec:experiments}, we present numerical experiments
with three applications: the D-optimal experiment design problem, 
a Poisson linear inverse problem, and relative-entropy nonnegative regression.
In all experiments, the ABPG methods, especially the adaptive variants, 
demonstrate superior performance compared with the BPG method.
Moreover, we obtain numerical certificates for the empirical $O(k^{-2})$ rate 
in all our experiments.

\paragraph{Related work.}
The relative smoothness condition directly extends the upper approximation
property~\eqref{eqn:quadratic-upper-bound} with more general Bregman distances.
Nesterov \cite{Nesterov15universal} took an alternative approach by 
extending the Lipschitz condition~\eqref{eqn:uniform-smooth}.
Specifically, he considered functions with H\"older continuous gradients 
with a parameter $\nu\in[0,1]$:
\[
\|\nabla f(x)-\nabla f(y)\|_* \leq L_\nu \|x-y\|^\nu, 
\quad x,y\in C,
\]
and obtained $O(k^{-(1+\nu)/2})$ rate with a universal gradient method
and $O(k^{-(1+3\nu)/2})$ rate with accelerated schemes. 
These methods are called ``universal'' because they do not assume the knowledge
of~$\nu$ and automatically ensure the best possible rate of convergence.
The accelerated $O(k^{-(1+3\nu)/2})$ rate 
interpolates between $O(k^{-1/2})$ and $O(k^{-2})$ with $\nu\in[0,1]$.
There seems to be no simple connection or correspondence between 
the H\"older smoothness property and the combination of relative smoothness
and the triangle scaling property studied in this paper.

Gutman and Pe{\~n}a \cite{GutmanPena2018}
studied iteration complexity of first-order methods using a general framework of perturbed Fenchel duality.
Their framework provides alternative derivations of the convergence rates of Bregman proximal gradient methods under the relative smooth setting and the ones under H{\"o}lder continuity assumption.

\paragraph{Technical assumptions.}
Development and analysis of optimization methods in the relatively smooth
setting require some delicate assumptions in order to cover many interesting 
applications without loss of rigor.
Here we adopt the same assumptions made in \cite{BauschkeBolteTeboulle17}
regarding problem~\eqref{eqn:composite-min}.

\begin{assumption}\label{asmp:all}
Suppose that $C$ is a closed convex set in $\reals^n$ and $h:\reals^n\to(-\infty,+\infty]$ is strictly convex and differentiable on an open set containing $\rint C$.
Moreover,
\begin{enumerate} \itemsep 0pt
    \item %$h:\reals^n \to(-\infty,\infty]$ 
        $h$ is of Legendre type
        \cite[Section~26]{Rockafellar70book}. In other words, it is essentially 
        smooth and strictly convex in $\rint\dom h$. 
        Essential smoothness means that it is differentiable and 
        $\|\nabla h(x_k)\|\to\infty$ for every sequence 
        $\{x_k\}_{k\in\nats}$ converging to a boundary point of $\dom h$.
    \item $f:\reals^n \to(-\infty,\infty]$ is a proper and closed convex 
        function, and it is differentiable on $\rint C$.
    \item $\Psi:\reals^n \to(-\infty,\infty]$ is a proper and closed convex 
        function, and $\dom\Psi \cap \rint\dom h \neq \emptyset$.
    \item $\inf_{x\in C}\{f(x)+\Psi(x)\}>-\infty$, i.e., 
        problem~\eqref{eqn:composite-min} is bounded below.
    \item The BPG step~\eqref{eqn:BPG} is well posed, 
        meaning that $x_{k+1}$ is unique and belongs to $\rint\dom h$.
\end{enumerate}
\end{assumption}
Sufficient conditions for the well-posedness of~\eqref{eqn:BPG} are given in
\cite[Lemma~2]{BauschkeBolteTeboulle17}. The same conditions also ensure
that our proposed accelerated methods are well-posed.

\section{Triangle scaling of Bregman distance}
\label{sec:triangle-scaling}

In this section, we define the \emph{triangle scaling property} for 
Bregman distances and discuss two different notions of 
\emph{triangle scaling exponent} (TSE).

\begin{definition}\label{def:triangle-scaling}
Let $h$ be a convex function that is differentiable on $\rint\dom h$.
The Bregman distance $D_h$ has the \emph{triangle scaling property} 
if there is a constant $\gamma>0$ 
such that for all $x,z,\zz\in\rint\dom h$,
\begin{equation}\label{eqn:triangle-scaling}
D_h\big((1-\theta)x+\theta z,\;(1-\theta)x+\theta \zz\big)~\leq~
\theta^\gamma D_h(z,\zz),\qquad \forall\, \theta\in[0,1].
\end{equation}
We call $\gamma$ a uniform \emph{triangle scaling exponent} (TSE) of $D_h$.
\end{definition}

\begin{figure}[t]
\centering
\includegraphics[width=0.4\textwidth]{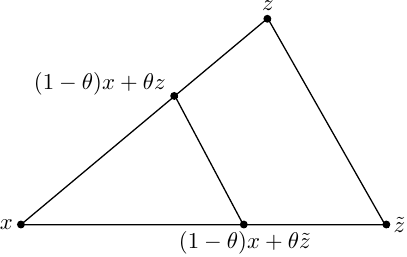}
\vspace{2ex}
\caption{Illustration of different points in the triangle scaling property.}
\label{fig:triangle}
\end{figure}

Figure~\ref{fig:triangle} gives a geometric illustration of the points 
involved in the above definition.

\bigskip 

If $D_h(x,y)$ is jointly convex in $(x,y)$, then the 
inequality~\eqref{eqn:triangle-scaling} holds with $\gamma=1$ 
because
\[
D_h\big((1-\theta)x+\theta z,\;(1-\theta)x+\theta \zz\big)
\leq (1-\theta)D_h(x,x) + \theta D_h(z,\zz) =\theta D_h(z,\zz).
\]
Therefore it is useful to study jointly convex Bregman distances.
Suppose $h:\reals\to(-\infty,\infty]$ is strictly convex and 
twice continuously differentiable on an open interval in~$\reals$.
Let $h''$ denotes the second derivative of~$h$.
It was shown in \cite{BauschkeBorwein2001} that 
the Bregman distance $D_h(\cdot,\cdot)$ is jointly convex 
if and only if $1/h''$ is concave.
This result applies directly to separable functions which can be written as
$h(x)=\sum_{i=1}^n h_i(x^{(i)})$.
If $1/h''_i$ is concave for each~$i=1,\ldots,n$, then we conclude that 
$D_h$ has a uniform TSE of at least $1$.
Below are some specific examples:
\begin{itemize}
\item \emph{The squared Euclidean distance.}
Let $h(x)=(1/2)\|x\|_2^2$ and $D_h(x,y) = (1/2)\|x-y\|_2^2$.
Obviously, here $D_h$ is jointly convex in its two arguments. 
But it is also easy to see that
\[
\frac{1}{2}\left\|(1-\theta)x+\theta z - \big((1-\theta)x+\theta \zz\big) 
\right\|_2^2
=\frac{1}{2}\|\theta(z-\zz)\|_2^2 = \theta^2\frac{1}{2}\|z-\zz\|_2^2.
\]
Therefore the squared Euclidean distance has a uniform TSE $\gamma=2$, which 
is much larger than~$1$ obtained by following the jointly convex argument.
\item 
\emph{Bregman distance induced by strongly convex and smooth functions.}
If~$h$ is $\mu$-strongly convex and $L$-smooth over its domain, 
then the inequality~\eqref{eqn:triangle-scaling} would hold with $\gamma=2$ 
if the right-hand side is multiplied by an additional factor~$G=L/\mu$,
which is the condition number of~$h$.
We will prove this fact in Section~\ref{sec:bound-ts-gain}.
\item \emph{The generalized Kullback-Leibler (KL) divergence.}
Let~$h$ be the negative Boltzmann-Shannon entropy:
$h(x)=\sum_{i=1}^n x^{(i)}\log x^{(i)}$ defined on $\reals^n_+$. 
The Bregman distance associated with~$h$ is
\begin{equation}\label{eqn:KL-divergence}
D_\mathrm{KL}(x,y) 
=\sum_{i=1}^n \biggl( x^{(i)}\log\biggl(\frac{x^{(i)}}{y^{(i)}}\biggr) 
- x^{(i)} + y^{(i)}\biggr).
\end{equation}
Since $1/h''_i=x^{(i)}$ is linear thus concave for each~$i$, we conclude that 
$D_\mathrm{KL}(x,y)$ is jointly convex in $(x,y)$, which implies that
it has a uniform TSE $\gamma=1$.
\item \emph{The Itakura-Saito (IS) distance.}
The IS distance is the Bregman distance generated by Burg's entropy 
$h(x)=\sum_{i=1}^n -\log(x^{(i)})$ with $\dom h=\reals^n_{++}$:
\begin{equation}\label{eqn:IS-distance}
D_\mathrm{IS}(x,y) = \sum_{i=1}^n \biggl(
-\log\biggl(\frac{x^{(i)}}{y^{(i)}}\biggr)+\frac{x^{(i)}}{y^{(i)}} - 1\biggr).
\end{equation}
Since $1/h''_i=(x^{(i)})^2$ is not concave, we conclude that
$D_\mathrm{IS}(\cdot,\cdot)$ is not jointly convex.
Hence if it has a uniform TSE, then it is likely to be less than~$1$.
In fact, it can be easily checked numerically that any $\gamma>0.5$ 
is not a uniform TSE for $D_\mathrm{IS}$ when $G=1$.
\item 
\emph{Bregman distance based on polynomial kernels.}
Reference functions of the form $h(x)=(1/p)\|x\|^p$ for some $p\geq 2$
recently attracted lots of attention following Nesterov's work on tensor methods in convex optimization \cite{Nesterov2019tensor}.
In general, the global TSEs for the induced Bregman divergence can be less than~$1$ for $p>2$. However, the modified reference function $h(x)=(1/2)\|x\|^2 + (1/p)\|x\|^p$ for $p\geq 4$ has $\gamma>1$, or $\gamma=2$ with an additional factor on the right-hand side of~\eqref{eqn:triangle-scaling}, over a bounded domain. 
We will give detailed analysis for the case $p=4$ in Section~\ref{sec:bound-ts-gain}, after introducing a relaxed version of TSE.
\end{itemize}

We observe that the largest uniform TSEs are quite different for the 
Bregman distances listed above. 
An important question is: Are these differences essential such that they
lead to different convergence rates if different Bregman distances are used
in an accelerated algorithm?
It would be ideal to derive an intrinsic characterization that is common for 
most Bregman distances and essential for convergence analysis of 
accelerated algorithms.

%\begin{figure}[t]
%\begin{subfigure}[b]{0.5\linewidth}
%  \centering
%  \includegraphics[width=\textwidth]{figures/exponentShannonEntropy}
%  \caption{KL-divergence} 
%  \label{fig:exponents-Shannon}
%\end{subfigure}%
%\begin{subfigure}[b]{0.5\linewidth}
%  \centering
%  \includegraphics[width=\textwidth]{figures/exponentBurgEntropy}
%  \caption{Burg distance} 
%  \label{fig:exponents-Burg}
%\end{subfigure}
%\caption{Each plot dipicts the function
%  $\hat{\gamma}(\theta) =
%   \log_\theta\bigl(D_h((1-\theta)x+\theta z, (1-\theta)x+\zz)/D_h(z,\zz)\bigr)$
%   for 10 randomly choosen triples $\{x,z,\zz\}\subset\rint\dom h$. } 
%\label{fig:exponents}
%\end{figure}

\subsection{The intrinsic triangle-scaling exponent}
\label{sec:intrinsic-TSE}

For any fixed triple $\{x,z,\zz\}\subset\rint\dom h$, 
we consider a relaxed version of triangle scaling:
\begin{equation}\label{eqn:gain-triangle-scaling}
D_h\big((1-\theta)x+\theta z,\;(1-\theta)x+\theta \zz\big)
~\leq~G(x,z,\zz)\, \theta^\gamma D_h(z,\zz),\qquad \forall\, \theta\in[0,1],
\end{equation}
where $G(x,z,\zz)$ depends on the triple $\{x,z,\zz\}$ 
but does not depend on~$\theta$.
The intrinsic TSE of $D_h$, denoted $\gammain$, is the largest~$\gamma$ 
such that~\eqref{eqn:gain-triangle-scaling} holds with some finite $G(x,z,\zz)$ 
for all triples $\{x,z,\zz\}\subset\rint\dom h$.

Notice that when $\theta$ is bounded away from~$0$, we can always find 
sufficiently large $G(x,z,\zz)$ to make the inequality 
in~\eqref{eqn:gain-triangle-scaling} hold with any value of $\gamma$. 
Therefore, the intrinsic TSE is determined only by the asymptotic behavior of 
$D_h\big((1-\theta)x+\theta z,(1-\theta)x+\theta \zz\big)$ when $\theta\to 0$.
A more precise definition is as follows.

\begin{definition}
The intrinsic TSE of $D_h$, denoted $\gammain$, is the largest~$\gamma$ such that for all $x,z,\zz\in \rint\dom h$,
\[
\limsup_{\theta\to 0} 
\frac{D_h\big((1-\theta)x+\theta z,(1-\theta)x+\theta \zz\big)}{\theta^\gamma}
~<~ \infty .
\]
\end{definition}
We show that a broad family of Bregman distances
share the same intrinsic TSE $\gammain=2$. 

\begin{proposition}\label{prop:intrinsic-TSE}
If $h$ is convex and twice continuously differentiable on $\rint\dom h$, 
then the intrinsic TSE of the Bregman distance $D_h$ is $2$.
Specifically, for any $\{x,z,\zz\}\subset\rint\dom h$, we have
\begin{equation}\label{eqn:limit-square-ratio}
\lim_{\theta\to 0} 
\frac{D_h\big((1-\theta)x+\theta z,\;(1-\theta)x+\theta \zz\big)}{\theta^2}
=\frac{1}{2} \bigl\langle \nabla^2 h(x) (z-\zz), z-\zz\bigr\rangle.
\end{equation}
\end{proposition}
\begin{proof}
Consider the limit in~\eqref{eqn:limit-square-ratio}, since both the numerator 
$D_h\big((1-\theta)x+\theta z,\;(1-\theta)x+\theta \zz\big)$
and the denominator $\theta^2$ converge to zero as $\theta\to 0$,
we apply L'Hospital's rule. 
First, by definition of the Bregman distance, we have
\begin{eqnarray*}
&& D_h\big((1-\theta)x+\theta z,\;(1-\theta)x+\theta \zz\big) \\
&=& D_h\big(x+\theta (z-x),\;x+\theta (\zz-x) \big) \\
&=& h\big(x+\theta (z-x)\bigr)- h\big(x+\theta (\zz-x)\big)
-\bigl\langle\nabla h\big(x+\theta (\zz-x)\big), \theta(z-\zz)\bigr\rangle .
\end{eqnarray*}
The derivative of $D_h\big((1-\theta)x+\theta z,\;(1-\theta)x+\theta \zz\big)$
with respect to $\theta$ is
\[
\frac{d}{d\theta} D_h\big((1-\theta)x+\theta z,\;(1-\theta)x+\theta \zz\big) 
~=~ A(\theta) 
-\bigl\langle\nabla^2 h(x+\theta(\zz-x))(\zz-x), \theta(z-\zz)\bigr\rangle,
\]
where
\[
A(\theta)
= \bigl\langle\nabla h(x+\theta(z-x)),z-x\bigr\rangle
-\bigl\langle\nabla h(x+\theta(\zz-x)),\zz-x\bigr\rangle 
-\bigl\langle\nabla h(x+\theta(\zz-x)),z-\zz\bigr\rangle.
\]
Therefore,
\begin{align}
\lim_{\theta\to 0} 
\frac{D_h\big((1-\theta)x+\theta z,(1-\theta)x+\theta \zz\big)}{\theta^2}
&=\lim_{\theta\to 0} \frac{A(\theta) -\bigl\langle 
\nabla^2 h\bigl(x+\theta(\zz-x)\bigr)(\zz-x),\theta(z-\zz)\bigr\rangle}{2\theta}
\nonumber \\
&=\lim_{\theta\to 0} \frac{A(\theta)}{2\theta} 
-\lim_{\theta\to 0} \frac{ 
\bigl\langle\nabla^2 h\bigl(x+\theta(\zz-x)\bigr)(\zz-x), z-\zz\bigr\rangle}{2}
\nonumber \\
&=\lim_{\theta\to 0} \frac{A(\theta)}{2\theta} 
- \frac{1}{2} \bigl\langle\nabla^2 h(x)(\zz-x), z-\zz\bigr\rangle.
\label{eqn:second-order-limit}
\end{align}
Notice that
\[
\lim_{\theta\to 0} A(\theta) 
= \bigl\langle\nabla h(x),z-x\bigr\rangle
-\bigl\langle\nabla h(x),\zz-x\bigr\rangle 
-\bigl\langle\nabla h(x),z-\zz\bigr\rangle = 0,
\]
so we apply L'Hospital's rule again:
\[
\lim_{\theta\to 0} \frac{A(\theta)}{2\theta}
= \frac{\bigl\langle\nabla^2 h(x)(z-x),z-x\bigr\rangle
-\bigl\langle\nabla^2 h(x)(\zz-x),\zz-x\bigr\rangle 
-\bigl\langle\nabla^2 h(x)(\zz-x),z-\zz\bigr\rangle}{2}.
\]
Plugging the last equality into~\eqref{eqn:second-order-limit} 
and after some simple algebra, we arrive at~\eqref{eqn:limit-square-ratio}.
\end{proof}

According to Proposition~\ref{prop:intrinsic-TSE}, the examples we considered
earlier, including the generalized KL-divergence and
the IS-distance, share the same intrinsic TSE $\gammain=2$.
Proposition~\ref{prop:intrinsic-TSE} also implies that the largest 
uniform TSE cannot exceed~$2$.

\subsection{Bounding the triangle-scaling gain}
\label{sec:bound-ts-gain}

In our analysis of accelerated algorithms in the relatively smooth setting, 
it is crucial to bound the triangle scaling gain $G(x,z,\zz)$.
Here we derive a general bound based on the relative scaling of the Hessians of~$h$ at different points. 
First, by the second-order Taylor expansion (mean value theorem), we have
\[
D_h(x,y) = h(x) - h(y) - \langle \nabla h(y), x-y\rangle 
=\frac{1}{2}(x-y)^T \nabla^2 h(w) (x-y),
\]
where $w=x+t(y-x)$ for some $t\in[0,1]$, which we denote as $w\in[x, y]$.
Consequently, if we define
\[
G_{\theta}(x, z, \zz) := \frac{D_h\bigl((1-\theta)x+\theta z, (1-\theta)x+\theta \zz\bigr)}{\theta^2 \cdot D_h(z,\zz)},
\]
then for some $u\in[(1-\theta)x+\theta z, (1-\theta)x+\theta \zz]$ and $v\in[z,\zz]$, we have
\[
G_{\theta}(x, z, \zz) 
=\frac{\frac{1}{2}\theta^2 (z-\zz)\nabla^2 h(u)(z-\zz)}{\theta^2\cdot \frac{1}{2}(z-\zz)^T\nabla^2 h(v)(z-\zz)}
=\frac{(z-\zz)\nabla^2 h(u)(z-\zz)}{(z-\zz)^T\nabla^2 h(v)(z-\zz)}.
\]
The last expression does not depend on~$\theta$ explicitly, but through $u\in[(1-\theta)x+\theta z, (1-\theta)x+\theta \zz]$. 
If $\theta\to 0$, then we have $u\to x$.

In general, let's assume $u,v\in\rint\dom h$ and $\nabla^2 h(v)$ is non-singular. Then we have
\begin{equation}\label{eqn:gain-eigen-bd}
G_{\theta}(x,z,\zz) \leq \eigmax\left(\nabla^2 h(v)^{-1/2} \nabla^2 h(u) \nabla^2 h(v)^{-1/2}\right),
\end{equation}
where $\eigmax(\cdot)$ denotes the maximum eigenvalue of a positive semidefinite matrix.
Therefore, the triangle-scaling gain is bounded by how close the two Hessians $\nabla^2 h(u)$ and $\nabla^2 h(v)$ are.
%If $h(x)=(1/2)\|x\|^2$, then $\nabla^2 h(x)= I$ for any~$x$, thus we always have $G_{\theta}(z,z,\zz)=1$, independent of $\theta$.
Since convex quadratic functions of the form $h(x)=(1/2)x^T A x + b^T x + c$
has constant Hessian $\nabla^2 h(x)=A$, the triangle scaling gain is always~$1$, independent of~$\theta$.

More generally, if $h$ is strongly convex and smooth (the eigenvalues of its Hessian have positive lower and upper bounds), then the triangle scaling gain is bounded by its condition number, i.e., the ratio between the upper and lower bounds on the Hessian eigenvalues.
Otherwise, the gain can be unbounded without any proximity assumption on the three points $(x,z,\zz)$. 

Next we consider the polynomial reference function
$h(x)=(1/4)\|x\|^4$, which does not have bounded Hessian.
In this case, we have $\nabla h(x)=\|x\|^{2} x$ and 
$\nabla^2 h(x)=\|x\|^{2}\cdot I + 2 xx^T$, where~$I$ is the identity matrix.
Clearly $\|x\|^2\cdot I \preceq \nabla^2 h(x) \preceq 3\|x\|^2\cdot I$.
According to~\eqref{eqn:gain-eigen-bd}, we have
\[
G_{\theta}(x,z,\zz)\leq 3\frac{\|u\|^2}{\|v\|^2},
\]
for some $u\in[(1-\theta)x+\theta z, (1-\theta)x+\theta \zz]$ and $v\in[z,\zz]$.
Therefore, it is not hard to construct examples with $v\approx 0$ thus the triangle scaling gain can be unbounded, even if the points $(x,z,\zz)$ are close in a small neighborhood (near the origin).
%On the other hand, if the solution to the optimization problem is not near the origin, then the local gain $G_{\theta}(x_k,z_k,z_{k+1})$ can be bounded with $\gamma_\mathrm{in}=2$.
%Generally speaking, the global TSE can be less than~1.

As a simple fix, we consider $h(x) = (1/2)\|x\|^2+(1/4)\|x\|^4$, 
whose Hessian is $\nabla^2 h(x)=(1+\|x\|^2) I + 2 xx^T$ and it satisfies 
$(1+\|x\|^2) I \preceq \nabla^2 h(x) \preceq (1+3\|x\|^2) I$.
Therefore, according to~\eqref{eqn:gain-eigen-bd}, 
\[
G_{\theta}(x,z,\zz)\leq \frac{1+3\|u\|^2}{1+\|v\|^2}.
\]
In this case, it is clear that $G_{\theta}(x,z,\zz)$ (associated with $\gamma_{\mathrm{in}}=2$) is always bounded if the three points $(x,z,\zz)$ are bounded, even if $\|v\|=0$. 
In particular, if the domain of consideration, $\dom\Psi$, is bounded with radius~$R$ from the origin, then 
$G_{\theta}(x,z,\zz)\leq 1+3R^2$.

\iffalse
\paragraph{Remark.}
Suppose $h$ is twice continuously differentiable.
If $\|z-\zz\|$ is small, then by definition of the Bregman distance,
\begin{equation}\label{eqn:2nd-order-residue}
D_h(z,\zz) = \frac{1}{2}\bigl\langle \nabla^2 h(\zz)(z-\zz), z-\zz\bigr\rangle
 + o(\|z-\zz\|^2) .
\end{equation}
Therefore, 
in the regime of $\theta\to 0$ and $\|z-\zz\|$ small, a good estimate based
on~\eqref{eqn:limit-square-ratio} and~\eqref{eqn:2nd-order-residue} is
\[
    G(x,z,\zz) = O \left( 
        \frac{\bigl\langle\nabla^2 h(x)(z-\zz),z-\zz\bigr\rangle}
             {\bigl\langle\nabla^2 h(\zz)(z-\zz),z-\zz\bigr\rangle}
         \right) .
\] 
If in addition $\nabla^2 h(x)\preceq\nabla^2 h(\zz)$ or $\|x-\zz\|$ is small, 
then it suffices to have $G(x,z,\zz)=O(1)$. 
\fi

\section{Accelerated Bregman proximal gradient method}
\label{sec:ABPG}
In this section, we present an accelerated Bregman proximal gradient (ABPG)
method for solving problem~\eqref{eqn:composite-min}, 
and analyze its convergence rate under the uniform triangle-scaling property. 
Adaptive variants based on the intrinsic TSE are developed in 
Section~\ref{sec:ABPG-gain}.

To simplify notation, we define a lower approximation 
of~$ F(x)=f(x)+\Psi(x)$ by linearizing~$f$ at a given point~$y$:
\[
    \ell(x|y) := f(y) + \langle\nabla f(y), x-y\rangle + \Psi(x).
\]
If $f$ is $L$-smooth relative to~$h$ (Definition~\ref{def:relative-smooth}), 
then we have both a lower and an upper approximation:
\begin{equation}\label{eqn:sanwitch}
\ell(x|y) ~\leq~  F(x) ~\leq~ \ell(x|y) + L D_h(x,y).
\end{equation}

Algorithm~\ref{alg:ABPG} describes the ABPG method.
Its input parameters include a uniform TSE $\gamma$ of $D_h$ 
and an initial point $x_0\in\rint C$. 
The sequence $\{\theta_k\}_{k\in\nats}$ in Algorithm~\ref{alg:ABPG}
satisfies $0<\theta_k\leq 1$ and
\begin{equation}\label{eqn:theta-inequality}
\frac{1-\theta_{k+1}}{\theta_{k+1}^\gamma} \leq \frac{1}{\theta_k^\gamma},
\qquad \forall\, k\geq 0. 
\end{equation}
When $\gamma=2$ and $\Psi\equiv 0$, Algorithm~\ref{alg:ABPG} reduces to 
the IGA (improved interior gradient algorithm) method
in \cite{AuslenderTeboulle06}, which is an 
extension of Nesterov's accelerated gradient method in \cite{Nesterov88} 
to the Bregman proximal setting.
%(with Bregman distance and nontrivial $\Psi$).
It was shown in \cite{AuslenderTeboulle06} that the IGA method attains
$O(k^{-2})$ rate of convergence under the uniform Lipschitz 
condition~\eqref{eqn:uniform-smooth}. 
In this paper, we consider the general case $\gamma\in[1,2]$ under the
much weaker relatively smooth condition. 
%Here we require $\gamma\geq 1$.
%Although the algorithm may still work for $\gamma<1$, it will not have better
%convergence rate than the simple BPG method~\eqref{eqn:BPG}.

\begin{algorithm}[t]
\caption{Accelerated Bregman proximal gradient (ABPG) method}
\label{alg:ABPG}
\linespread{1.2}\selectfont
\DontPrintSemicolon
\textbf{input:} initial point $x_0 \in \rint C$ and $\gamma\geq 1$. \;
initialize: $z_0=x_0$ and $\theta_0=1$. \;
\For{$k=0,1,2,\dots $}{
\nl\label{lnl:ABPG-yk}% 
$y_k = (1-\theta_k) x_k + \theta_k z_k $ \; 
\nl\label{lnl:ABPG-zk}% 
$z_{k+1} = \argmin_{z \in C} \bigl\{ \ell(z|y_k) 
 + \theta_k^{\gamma-1} L D_h(z,z_k)\bigr\} $ \; 
\nl\label{lnl:ABPG-xk}% 
$x_{k+1} = (1-\theta_k) x_k + \theta_k z_{k+1} $ \; 
\nl choose $\theta_{k+1}\in(0,1]$ such that
$\frac{1-\theta_{k+1}}{\theta_{k+1}^\gamma} \leq \frac{1}{\theta_k^\gamma}$\;
}
\end{algorithm}

Using the definition of $\ell(\cdot|\cdot)$, 
line~\ref{lnl:ABPG-zk} in Algorithm~\ref{alg:ABPG} can be written as
\begin{equation}\label{eqn:ABPG-zk}
z_{k+1} = \argmin_{x\in C}\,\Bigl\{ f(y_k)+\langle\nabla f(y_k), x-y_k\rangle
+ \theta_k^{\gamma-1} L D_h(x,z_k) + \Psi(x) \Bigr\} ,
\end{equation}
which is very similar to the BPG step~\eqref{eqn:BPG}. 
Here the function~$f$ is linearized around~$y_k$ 
but the Bregman distance is measured from a different point~$z_k$.
Therefore it does not fit into the framework of majorization and 
the sequence $ F(x_k)$ may not be monotone decreasing.
However, the upper bound in~\eqref{eqn:sanwitch} is still crucial 
to ensure convergence of the algorithm.
Under the same assumption that the BPG step is well-posed
(Assumption~\ref{asmp:all}.5), the ABPG method is also well-posed,
meaning that $z_{k+1}\in\rint C$ always and it is unique.

\subsection{Convergence analysis of ABPG}
\label{sec:ABPG-analysis}

We show that the ABPG method converges with a sublinear rate of
$O(k^{-\gamma})$.
First, we state a basic property of optimization with Bregman distance
\cite[Lemma~3.2]{ChenTeboulle93}.
\begin{lemma}\label{lem:basic-lemma-D}
For any closed convex function $\varphi:\reals^n\to(-\infty,\infty]$ and
any $z\in\rint \dom h$, if 
\[
    z_+ = \argmin_{x\in C}\, \bigl\{\varphi(x) + D_h(x,z)\bigr\}
\]
and $h$ is differentiable at $z_+$, then
\[
    \varphi(x) + D_h(x,z) \geq \varphi(z_+) + D_h(z_+,z) + D_h(x,z_+),
    \quad \forall x\in\dom h.
\]
\end{lemma}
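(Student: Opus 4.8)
The statement is the classical three–point (``Bregman proximal'') inequality, and the plan is to combine a first–order optimality condition for $z_+$ with the three–point identity for Bregman distances. Write $\Phi(x):=\varphi(x)+D_h(x,z)$, a closed convex function on $C$. As a function of its first argument, $D_h(x,z)=h(x)-h(z)-\langle\nabla h(z),x-z\rangle$ is convex and differentiable wherever $h$ is, with gradient $\nabla h(x)-\nabla h(z)$; in particular it is differentiable at $z_+$ by hypothesis (and near $z_+$, since $z\in\rint\dom h$ and the step is well posed). Because $z_+$ minimizes $\Phi$ over $C$, first–order optimality yields a subgradient $g\in\partial\varphi(z_+)$ with
\[
\langle g+\nabla h(z_+)-\nabla h(z),\ x-z_+\rangle\ \geq\ 0,\qquad\forall\,x\in C.
\]
(If one prefers to avoid the subdifferential sum rule, the same inequality follows by evaluating $\Phi$ at $(1-t)z_++tx$ for $x\in C$, using minimality, dividing by $t>0$, and letting $t\to0$.)

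Next I would record the three–point identity: for any $x\in\dom h$,
\[
D_h(x,z)\ =\ D_h(x,z_+)+D_h(z_+,z)+\langle\nabla h(z_+)-\nabla h(z),\ x-z_+\rangle,
\]
which is immediate from the definition of $D_h$ by expanding all three distances and cancelling the $h$–terms. Combining this with the subgradient inequality $\varphi(x)\geq\varphi(z_+)+\langle g,x-z_+\rangle$ (convexity of $\varphi$) gives
\[
\varphi(x)+D_h(x,z)\ \geq\ \varphi(z_+)+D_h(z_+,z)+D_h(x,z_+)+\langle g+\nabla h(z_+)-\nabla h(z),\ x-z_+\rangle,
\]
and the last inner product is nonnegative by the optimality condition, which yields the claim.

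The only step needing care — and the one I expect to be the main obstacle — is the bookkeeping around the optimality condition and the range of $x$: one must justify the first–order condition (e.g.\ via the well-posedness / constraint qualification already in force when the lemma is invoked, cf.\ Assumption~\ref{asmp:all}, together with finiteness and differentiability of $D_h(\cdot,z)$ around $z_+$), and note that the displayed inequality from optimality is a priori valid only for $x\in C$. For $x\notin\dom\varphi$ the asserted inequality is trivial since its left-hand side is $+\infty$; the remaining cases are covered because in the settings where the lemma is applied the effective domain of $\varphi$ lies in $C$. Everything else is the one-line Bregman identity above, so no genuinely hard computation is involved.
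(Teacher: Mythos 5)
Your proof is correct. The paper does not prove this lemma at all — it simply cites it as \cite[Lemma~3.2]{ChenTeboulle93} — and your argument (first-order optimality for $z_+$ over $C$ combined with the three-point identity $D_h(x,z)=D_h(x,z_+)+D_h(z_+,z)+\langle\nabla h(z_+)-\nabla h(z),x-z_+\rangle$) is exactly the standard proof given in that reference, including your correct observation that the optimality condition only covers $x\in C$ and that the extension to all $x\in\dom h$ relies on $\varphi$ being $+\infty$ outside the relevant set in the lemma's applications.
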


The following lemma establishes a relationship between the two 
consecutive steps of Algorithm~\ref{alg:ABPG}.
It is an extension of Proposition~1 in \cite{Tseng08}
, which uses $\gamma=2$ under the assumption~\eqref{eqn:uniform-smooth}.

\begin{lemma}\label{lem:ABPG-onestep}
Suppose Assumption~\ref{asmp:all} holds, 
$f$ is $L$-smooth relative to~$h$ on~$C$, 
and $\gamma$ is a uniform TSE of~$D_h$.
For any $x\in\dom h$, the sequences generated by Algorithm~\ref{alg:ABPG} 
satisfy, for all $k\geq 0$, 
\begin{equation}\label{eqn:ABPG-onestep}
\frac{1-\theta_{k+1}}{\theta_{k+1}^\gamma} \bigl( F(x_{k+1}) -  F(x)\bigr)
    + L D_h(x, z_{k+1}) 
~\leq~ \frac{1-\theta_k}{\theta_k^\gamma} \bigl( F(x_k) -  F(x) \bigr)
    + L D_h(x, z_k) .
\end{equation}
\end{lemma}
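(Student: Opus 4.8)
The plan is to combine three ingredients: the upper bound from relative smoothness in~\eqref{eqn:sanwitch}, the triangle scaling property~\eqref{eqn:triangle-scaling}, and the three-point inequality of Lemma~\ref{lem:basic-lemma-D}. First I would apply the upper bound in~\eqref{eqn:sanwitch} at the point $x_{k+1}=(1-\theta_k)x_k+\theta_k z_{k+1}$ with linearization point $y_k=(1-\theta_k)x_k+\theta_k z_k$, obtaining
\[
F(x_{k+1}) \leq \ell(x_{k+1}\mid y_k) + L\, D_h\!\bigl((1-\theta_k)x_k+\theta_k z_{k+1},\;(1-\theta_k)x_k+\theta_k z_k\bigr).
\]
Then I would invoke the triangle scaling property~\eqref{eqn:triangle-scaling} with the triple $(x_k, z_{k+1}, z_k)$ and parameter $\theta_k$ to bound the Bregman distance term by $\theta_k^\gamma L\, D_h(z_{k+1}, z_k)$. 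For the linear part, convexity of $\ell(\cdot\mid y_k)$ in its first argument gives $\ell(x_{k+1}\mid y_k) \leq (1-\theta_k)\ell(x_k\mid y_k) + \theta_k \ell(z_{k+1}\mid y_k)$, and since $\ell(\cdot\mid y_k)$ is an affine-plus-$\Psi$ lower bound of $F$, we have $\ell(x_k\mid y_k)\leq F(x_k)$.

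Next I would handle $\ell(z_{k+1}\mid y_k)$ using the optimality of $z_{k+1}$. Line~\ref{lnl:ABPG-zk} minimizes $\ell(z\mid y_k) + \theta_k^{\gamma-1}L\, D_h(z, z_k)$ over $z\in C$; applying Lemma~\ref{lem:basic-lemma-D} with $\varphi(z) = \ell(z\mid y_k)/(\theta_k^{\gamma-1}L)$ and $z=z_k$, $z_+=z_{k+1}$, and rescaling, I get for any $x\in\dom h$,
\[
\ell(z_{k+1}\mid y_k) + \theta_k^{\gamma-1}L\, D_h(z_{k+1}, z_k) \leq \ell(x\mid y_k) + \theta_k^{\gamma-1}L\, D_h(x, z_k) - \theta_k^{\gamma-1}L\, D_h(x, z_{k+1}).
\]
Since $\ell(x\mid y_k)\leq F(x)$, this bounds $\theta_k\ell(z_{k+1}\mid y_k)$ in terms of $F(x)$, the distances $D_h(x,z_k)$ and $D_h(x,z_{k+1})$ (each picking up a factor $\theta_k^\gamma L$), and $-\theta_k^\gamma L\, D_h(z_{k+1},z_k)$ — the last of which exactly cancels the triangle-scaling term $\theta_k^\gamma L\, D_h(z_{k+1},z_k)$ coming from the relative-smoothness step.

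Collecting everything, I would arrive at an inequality of the form
\[
F(x_{k+1}) - F(x) \leq (1-\theta_k)\bigl(F(x_k)-F(x)\bigr) + \theta_k^\gamma L\, D_h(x,z_k) - \theta_k^\gamma L\, D_h(x,z_{k+1}).
\]
Dividing through by $\theta_k^\gamma$ and rearranging gives
\[
\frac{1}{\theta_k^\gamma}\bigl(F(x_{k+1})-F(x)\bigr) + L\, D_h(x,z_{k+1}) \leq \frac{1-\theta_k}{\theta_k^\gamma}\bigl(F(x_k)-F(x)\bigr) + L\, D_h(x,z_k).
\]
Finally, applying the defining inequality~\eqref{eqn:theta-inequality}, namely $\frac{1-\theta_{k+1}}{\theta_{k+1}^\gamma}\leq\frac{1}{\theta_k^\gamma}$, to the left-hand side (using that $F(x_{k+1})-F(x)$ may be negative, so one should be slightly careful — but the standard fix is that the claimed inequality~\eqref{eqn:ABPG-onestep} follows either directly or after noting the monotonicity is applied in the right direction) yields~\eqref{eqn:ABPG-onestep}. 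The main obstacle I anticipate is the bookkeeping around the case $k=0$ (where $\theta_0=1$ makes the coefficient $\frac{1-\theta_0}{\theta_0^\gamma}=0$) and ensuring the sign of $F(x_{k+1})-F(x)$ does not spoil the step from $\frac{1}{\theta_k^\gamma}$ to $\frac{1-\theta_{k+1}}{\theta_{k+1}^\gamma}$; this is the kind of subtlety where one must check that~\eqref{eqn:theta-inequality} is used in the correct direction, possibly requiring the per-iteration bound to first be established in the cleaner ``$1/\theta_k^\gamma$'' form before chaining.
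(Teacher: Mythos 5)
Your proposal is correct and follows essentially the same route as the paper's proof: relative smoothness plus triangle scaling to get $F(x_{k+1})\leq \ell(x_{k+1}|y_k)+\theta_k^\gamma L D_h(z_{k+1},z_k)$, convexity of $\ell(\cdot|y_k)$, the three-point inequality of Lemma~\ref{lem:basic-lemma-D} with $\varphi=\ell(\cdot|y_k)/(\theta_k^{\gamma-1}L)$, the lower bound $\ell(\cdot|y_k)\leq F(\cdot)$, division by $\theta_k^\gamma$, and finally~\eqref{eqn:theta-inequality}. The sign subtlety you flag in the last step (replacing $1/\theta_k^\gamma$ by the smaller coefficient $(1-\theta_{k+1})/\theta_{k+1}^\gamma$ only goes the right way when $F(x_{k+1})\geq F(x)$) is real but is equally present and silently glossed over in the paper's own proof, which likewise passes from~\eqref{eqn:last-recursion} to~\eqref{eqn:ABPG-onestep} without comment; the downstream use in Theorem~\ref{thm:ABPG-rate} is unaffected since the final step there reverts to the $1/\theta_k^\gamma$ form.
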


\begin{proof}
First, using the upper approximation in~\eqref{eqn:sanwitch} and 
line~\ref{lnl:ABPG-yk} and line~\ref{lnl:ABPG-xk} in Algorithm~\ref{alg:ABPG},
we have
\begin{eqnarray}
 F(x_{k+1})
&\leq& \ell(x_{k+1}|y_k) + L D_h(x_{k+1}, y_k) \nonumber \\
&=& \ell(x_{k+1}|y_k) + L D_h\bigl((1-\theta_k)x_k+\theta_k z_{k+1},
    (1-\theta_k)x_k+\theta_k z_k\bigr)  \nonumber \\
&\leq& \ell(x_{k+1}|y_k) + \theta_k^\gamma L D_h(z_{k+1}, z_k),
\label{eqn:ABPG-triangle}
\end{eqnarray}
where in the last inequality
we used the triangle-scaling property~\eqref{eqn:triangle-scaling}.
Using $x_{k+1}=(1-\theta_k)x_k+\theta_k z_{k+1}$ and
convexity of $\ell(\cdot|y_k)$, we have
\begin{eqnarray}
 F(x_{k+1})
&\leq& (1-\theta_k)\ell(x_k|y_k) + \theta_k \ell(z_{k+1}|y_k) 
    + \theta_k^\gamma L D_h(z_{k+1}, z_k)  \label{eqn:dual-avg-start}\\
&=& (1-\theta_k)\ell(x_k|y_k) + \theta_k \left( \ell(z_{k+1}|y_k) 
    + \theta_k^{\gamma-1} L D_h(z_{k+1}, z_k) \right) \nonumber .
\end{eqnarray}
Now applying Lemma~\ref{lem:basic-lemma-D} with 
$\varphi(x)=\ell(x|y_k)/(\theta^{\gamma-1}L)$ yields, for any $x\in\dom h$,
\[
\ell(z_{k+1}|y_k) + \theta_k^{\gamma-1} L D_h(z_{k+1}, z_k)
~\leq~ \ell(x|y_k) + \theta_k^{\gamma-1} L D_h(x, z_k)  
- \theta_k^{\gamma-1} L D_h(x, z_{k+1}) .
\]
Hence
\begin{eqnarray*}
 F(x_{k+1})
&\leq& (1-\theta_k)\ell(x_k|y_k) + \theta_k \left( \ell(x|y_k) 
    + \theta_k^{\gamma-1} L D_h(x, z_k)  
    - \theta_k^{\gamma-1} L D_h(x, z_{k+1}) \right) \\
&=& (1-\theta_k)\ell(x_k|y_k) + \theta_k \ell(x|y_k) 
    + \theta_k^{\gamma} \bigl( L D_h(x, z_k) - L D_h(x, z_{k+1}) \bigr) \\
&\leq& (1-\theta_k) F(x_k) + \theta_k  F(x) 
    + \theta_k^{\gamma} \bigl( L D_h(x, z_k) - L D_h(x, z_{k+1}) \bigr) ,
\end{eqnarray*}
where in the last inequality we used the lower bound in~\eqref{eqn:sanwitch}.
Subtracting $ F(x)$ from both sides of the inequality above, we obtain
\[
 F(x_{k+1}) -  F(x) ~\leq~ (1-\theta_k)\bigl( F(x_k) -  F(x) \bigr)
+ \theta_k^{\gamma} \bigl( L D_h(x, z_k) - L D_h(x, z_{k+1}) \bigr).
\]
Dividing both sides by $\theta_k^\gamma$ and rearranging terms yield
\begin{equation}\label{eqn:last-recursion}
\frac{1}{\theta_k^\gamma} \bigl( F(x_{k+1}) -  F(x)\bigr)
    + L D_h(x, z_{k+1}) 
~\leq~ \frac{1-\theta_k}{\theta_k^\gamma} \bigl( F(x_k) -  F(x) \bigr)
    + L D_h(x, z_k). 
\end{equation}
Finally applying the condition~\eqref{eqn:theta-inequality}
gives the desired result.
\end{proof}

\begin{lemma}\label{lem:theta-explicit}
The sequence $\theta_k = \frac{\gamma}{k+\gamma}$ for $k=0,1,2,\ldots$
satisfies the condition~\eqref{eqn:theta-inequality}.
\end{lemma}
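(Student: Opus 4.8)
The plan is to verify the inequality
\[
\frac{1-\theta_{k+1}}{\theta_{k+1}^\gamma}\;\leq\;\frac{1}{\theta_k^\gamma}
\]
directly for the candidate sequence $\theta_k=\gamma/(k+\gamma)$. First I would substitute $\theta_k=\gamma/(k+\gamma)$ and $\theta_{k+1}=\gamma/(k+1+\gamma)$ into both sides. The right-hand side becomes $((k+\gamma)/\gamma)^\gamma$, which is clean. For the left-hand side, note that $1-\theta_{k+1}=1-\gamma/(k+1+\gamma)=(k+1)/(k+1+\gamma)$, so
\[
\frac{1-\theta_{k+1}}{\theta_{k+1}^\gamma}
=\frac{k+1}{k+1+\gamma}\cdot\frac{(k+1+\gamma)^\gamma}{\gamma^\gamma}
=\frac{(k+1)(k+1+\gamma)^{\gamma-1}}{\gamma^\gamma}.
\]
So the claim reduces to showing $(k+1)(k+1+\gamma)^{\gamma-1}\leq(k+\gamma)^\gamma$ for all integers $k\geq 0$ and all $\gamma\in[1,2]$ (or more generally $\gamma\geq 1$, which is the range assumed in Algorithm~\ref{alg:ABPG}).

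To prove $(k+1)(k+1+\gamma)^{\gamma-1}\leq(k+\gamma)^\gamma$, I would divide both sides by $(k+\gamma)^\gamma$ and rewrite the target as
\[
\frac{k+1}{k+\gamma}\left(\frac{k+1+\gamma}{k+\gamma}\right)^{\gamma-1}\leq 1.
\]
Write $a=\frac{k+1}{k+\gamma}$ and $b=\frac{k+1+\gamma}{k+\gamma}=1+\frac{1}{k+\gamma}$, so we want $a\,b^{\gamma-1}\leq 1$. Since $\gamma\geq 1$, we have $k+1\leq k+\gamma$ so $a\leq 1$, and we have $b\geq 1$, so this is a genuine trade-off. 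The natural tool is the weighted AM-GM inequality or, equivalently, concavity of $\log$: taking logarithms, it suffices to show
\[
\log a+(\gamma-1)\log b\leq 0.
\]
Both $\log a$ and $\log b$ can be bounded using $\log(1+t)\leq t$ and $\log(1-t)\leq -t$ (i.e.\ $\log s\leq s-1$). Indeed $\log a=\log\!\bigl(1-\tfrac{\gamma-1}{k+\gamma}\bigr)\leq -\tfrac{\gamma-1}{k+\gamma}$ and $\log b=\log\!\bigl(1+\tfrac{1}{k+\gamma}\bigr)\leq \tfrac{1}{k+\gamma}$, hence
\[
\log a+(\gamma-1)\log b\;\leq\;-\frac{\gamma-1}{k+\gamma}+\frac{\gamma-1}{k+\gamma}\;=\;0,
\]
which is exactly what we need. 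This also handles the boundary case $\gamma=1$ trivially (both sides equal), and I should separately check $k=0$: there $\theta_0=1$, the right-hand side of~\eqref{eqn:theta-inequality} is $1$, and the left-hand side is $(1-\theta_1)/\theta_1^\gamma=(1/(1+\gamma))\cdot((1+\gamma)/\gamma)^\gamma=(1+\gamma)^{\gamma-1}/\gamma^\gamma\leq 1$, which is the same inequality with $k=0$, so nothing special is required.

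The only mild subtlety — and the place I would be most careful — is making sure the one-line log bound is applied in the right direction: $\log a$ needs the bound $\log s\leq s-1$ applied to $s=a<1$ (so $s-1<0$, a genuine upper bound pointing the right way), and $\log b$ needs the same bound applied to $s=b>1$. Both are instances of the single elementary fact $\log s\leq s-1$ for $s>0$, which the paper already invokes in the form $\log(\alpha)\leq\alpha-1$ when bounding the KL triangle-scaling gain, so it is available. Everything else is routine algebra; I would also remark that one should verify $0<\theta_k\leq 1$ for the sequence, which is immediate since $\gamma/(k+\gamma)\in(0,1]$ for $\gamma\geq 1$ and $k\geq 0$, with equality at $k=0$ matching the initialization $\theta_0=1$.
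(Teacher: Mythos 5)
Your proposal is correct and follows essentially the same route as the paper: both reduce the claim to the inequality $(k+1)(k+1+\gamma)^{\gamma-1}\leq(k+\gamma)^\gamma$ via the identical algebraic simplification of the two sides. The only difference is in the last step, where the paper invokes the weighted arithmetic--geometric mean inequality with weights $1$ and $\gamma-1$ while you take logarithms and apply the tangent-line bound $\log s\leq s-1$; these are equivalent elementary facts, and your version checks out (including the sign of $\gamma-1\geq 0$ when scaling the bound on $\log b$).
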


\begin{proof}
With $\theta_k = \frac{\gamma}{k+\gamma}$, we have
\begin{equation}\label{eqn:simple-theta-k+1}
\frac{1-\theta_{k+1}}{\theta_{k+1}^\gamma} 
= \left( 1-\frac{\gamma}{k+1+\gamma}\right) 
  \left(\frac{k+1+\gamma}{\gamma}\right)^\gamma
= \frac{(k+1)(k+1+\gamma)^{\gamma-1}}{\gamma^\gamma}
\end{equation}
and
\begin{equation}\label{eqn:simple-theta-k}
\frac{1}{\theta_k^\gamma} 
=\left(\frac{k+\gamma}{\gamma}\right)^\gamma 
= \frac{(k+\gamma)^\gamma}{\gamma^\gamma}.
\end{equation}
Recall the weighted arithmetic mean and geometric mean inequality
(see, e.g., \cite[Section~2.5]{HLP:52}.), i.e., 
for any positive real numbers $a$, $b$, $\alpha$ and $\beta$, it holds that
\begin{equation}\label{eqn:wagmi2}
a^\alpha b^\beta 
\leq \left(\frac{\alpha a + \beta b}{\alpha+\beta}\right)^{\alpha+\beta}.
\end{equation}
Setting $a=k+1$, $b=k+1+\gamma$, $\alpha=1$ and $\beta=\gamma-1$, we arrive at
\[
(k+1)(k+1+\gamma)^{\gamma-1} \leq 
\left( \frac{k+1+(\gamma-1)(k+1+\gamma)}{1+\gamma-1} \right)^{1+\gamma-1}
=(k+\gamma)^\gamma,
\]
which, 
together with~\eqref{eqn:simple-theta-k+1} and~\eqref{eqn:simple-theta-k},
implies the inequality~(\ref{eqn:theta-inequality}).
\end{proof}

A slightly faster converging sequence $\theta_k$ can be obtained by solving 
the equality in~\eqref{eqn:theta-inequality}.
Since there is no closed-form solution in general, 
we can find $\theta_{k+1}$ as the root of
\begin{equation}\label{eqn:theta-equation}
    \theta^\gamma - \theta_k^\gamma(1-\theta) = 0
\end{equation}
numerically, say, using Newton's method with $\theta_k$ as the starting point. 

\begin{lemma}\label{lem:theta-bound}
Let $\theta_0=1$ and $\theta_{k+1}$ be the solution 
to~\eqref{eqn:theta-equation} for all $k\geq 0$.
Then $\theta_k \leq \frac{\gamma}{k+\gamma}$ for all $k\geq 0$.
\end{lemma}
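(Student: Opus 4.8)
The plan is to prove the bound $\theta_k \le \gamma/(k+\gamma)$ by induction on $k$, exploiting the fact that the sequence defined by the equality~\eqref{eqn:theta-equation} is ``the fastest'' one, so it must be dominated by any sequence satisfying the inequality~\eqref{eqn:theta-inequality}. Concretely, Lemma~\ref{lem:theta-explicit} already shows that the explicit sequence $\bar\theta_k := \gamma/(k+\gamma)$ satisfies $\frac{1-\bar\theta_{k+1}}{\bar\theta_{k+1}^\gamma} \le \frac{1}{\bar\theta_k^\gamma}$, i.e., $\bar\theta_{k+1}^\gamma - \bar\theta_k^\gamma(1-\bar\theta_{k+1}) \le 0$. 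The key observation is that the function $g_t(\theta) := \theta^\gamma - t^\gamma(1-\theta) = \theta^\gamma + t^\gamma\theta - t^\gamma$ is strictly increasing in $\theta$ on $(0,1]$ (derivative $\gamma\theta^{\gamma-1} + t^\gamma > 0$), and it is also increasing in the parameter $t$ for fixed $\theta\in(0,1)$ (derivative $\gamma t^{\gamma-1}(\theta-1) < 0$ — so actually \emph{decreasing} in $t$; I will need to be careful with this sign and instead phrase monotonicity in terms of the root).

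First I would set up the induction. The base case is $k=0$: $\theta_0 = 1 = \gamma/(0+\gamma)$, so equality holds. For the inductive step, assume $\theta_k \le \bar\theta_k = \gamma/(k+\gamma)$. By definition $\theta_{k+1}$ is the unique root in $(0,1]$ of $g_{\theta_k}(\theta) = \theta^\gamma - \theta_k^\gamma(1-\theta) = 0$ (uniqueness follows from strict monotonicity of $g_{\theta_k}$ in $\theta$, together with $g_{\theta_k}(0) = -\theta_k^\gamma < 0$ and $g_{\theta_k}(1) = 1 > 0$). I want to show $\theta_{k+1} \le \bar\theta_{k+1}$. Since $g_{\theta_k}$ is strictly increasing in its argument, it suffices to show $g_{\theta_k}(\bar\theta_{k+1}) \ge 0 = g_{\theta_k}(\theta_{k+1})$. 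Now $g_{\theta_k}(\bar\theta_{k+1}) = \bar\theta_{k+1}^\gamma - \theta_k^\gamma(1-\bar\theta_{k+1})$. Using the inductive hypothesis $\theta_k^\gamma \le \bar\theta_k^\gamma$ and the fact that $1 - \bar\theta_{k+1} \ge 0$, we get $\theta_k^\gamma(1-\bar\theta_{k+1}) \le \bar\theta_k^\gamma(1-\bar\theta_{k+1})$, hence $g_{\theta_k}(\bar\theta_{k+1}) \ge \bar\theta_{k+1}^\gamma - \bar\theta_k^\gamma(1-\bar\theta_{k+1})$. But this last quantity is exactly $g_{\bar\theta_k}(\bar\theta_{k+1})$, which is $\ge 0$ by Lemma~\ref{lem:theta-explicit} (rearranging~\eqref{eqn:theta-inequality} applied to $\bar\theta$). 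Chaining these gives $g_{\theta_k}(\bar\theta_{k+1}) \ge 0$, and therefore $\theta_{k+1} \le \bar\theta_{k+1} = \gamma/(k+1+\gamma)$, completing the induction.

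I also need to confirm the well-definedness implicitly assumed: that $\theta_{k+1}\in(0,1]$ exists. As noted, $g_{\theta_k}$ is continuous and strictly increasing on $[0,1]$ with $g_{\theta_k}(0)<0<g_{\theta_k}(1)$ whenever $\theta_k\in(0,1]$ (if $\theta_k = 1$ the root is $\theta_{k+1}$ solving $\theta^\gamma = 1-\theta$, which lies strictly in $(0,1)$), so the intermediate value theorem supplies a unique root in $(0,1)$ for $k\ge 0$; and since all roots lie in $(0,1)$, the hypothesis $\theta_k\in(0,1]$ propagates. The main obstacle — really the only delicate point — is getting the two monotonicity directions of $g$ straight and correctly using $1-\bar\theta_{k+1}\ge 0$ to push the inductive hypothesis $\theta_k^\gamma\le\bar\theta_k^\gamma$ through the $(1-\theta)$ factor in the right direction; once that bookkeeping is done the argument is a short three-line chain of inequalities. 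No new analytic input beyond Lemma~\ref{lem:theta-explicit} and elementary monotonicity is needed.
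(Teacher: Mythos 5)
Your proof is correct and follows essentially the same strategy as the paper's: induction on $k$, with Lemma~\ref{lem:theta-explicit} supplying the key inequality for the explicit sequence $\gamma/(k+\gamma)$ and a monotonicity argument transferring the inductive hypothesis to the next root. The only cosmetic difference is that you compare roots directly via the increasing polynomial $g_t(\theta)=\theta^\gamma-t^\gamma(1-\theta)$, whereas the paper introduces an auxiliary sequence $\xi_k$ and works with the decreasing function $\omega(\theta)=(1-\theta)/\theta^\gamma$; the two bookkeeping devices are interchangeable.
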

\begin{proof}
Let $\vartheta_k=\frac{\gamma}{k+\gamma}$ and define another sequence
$\xi_k$ such that $\xi_0=1$ and 
\begin{equation}\label{eqn:theta-mix}
    \frac{1-\xi_{k+1}}{\xi_{k+1}^\gamma} 
    = \frac{1}{\vartheta_k^\gamma},
    \qquad \forall\, k\geq 0.
\end{equation}
Notice that the function 
\[
    \omega(\theta):=\frac{1-\theta}{\theta^{\gamma}}
\]
is monotone decreasing in~$\theta$.
Since $\omega(\vartheta_{k+1})\leq 1/\vartheta_k^\gamma$ 
by Lemma~\ref{lem:theta-explicit}
and $\omega(\xi_{k+1})=1/\vartheta_k^\gamma$ by~\eqref{eqn:theta-mix},
we have $\xi_{k+1}\leq\vartheta_{k+1}$ for all $k\geq 0$.

Next we prove $\theta_k\leq\vartheta_k$ for all $k\geq 0$ by mathematical
induction. This obviously holds for $k=0$ since $\theta_0=\vartheta_0=1$. 
Suppose $\theta_k\leq\vartheta_k$ holds for some $k\geq 0$. 
Then using the facts $\omega(\theta_{k+1})=1/\theta_k^\gamma$ and
$\omega(\xi_{k+1})=1/\vartheta_k^\gamma$, we obtain
$\omega(\theta_{k+1})\geq\omega(\xi_{k+1})$.
Since $\omega$ is monotone decreasing, 
we conclude that $\theta_{k+1}\leq\xi_{k+1}$.
Combining with $\xi_{k+1}\leq\vartheta_{k+1}$ obtained above, we have
$\theta_{k+1}\leq\vartheta_{k+1}$.
This completes the induction.
\end{proof}

\begin{theorem}\label{thm:ABPG-rate}
Suppose Assumption~\ref{asmp:all} holds,
$f$ is $L$-smooth relative to~$h$ on~$C$, 
and $\gamma$ is a uniform TSE of~$D_h$.
If $\theta_k\leq \frac{\gamma}{k+\gamma}$ for all $k\geq 0$, then
the outputs of Algorithm~\ref{alg:ABPG} satisfy,
for any $x\in\dom h$, 
\[
     F(x_{k+1}) -  F(x) \leq 
    \left(\frac{\gamma}{k+\gamma}\right)^\gamma L D_h(x, x_0),
    \qquad \forall\, k\,\geq 0.
\]
\end{theorem}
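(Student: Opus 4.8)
The plan is to telescope the one-step inequality from Lemma~\ref{lem:ABPG-onestep} and then use the explicit bound on $\theta_k$ from Lemma~\ref{lem:theta-bound} to convert the accumulated constant into the claimed rate. Concretely, fix $x\in\dom h$ and abbreviate $\delta_k := F(x_k) - F(x)$ and $\Delta_k := \frac{1-\theta_k}{\theta_k^\gamma}\,\delta_k + L\,D_h(x,z_k)$. Then Lemma~\ref{lem:ABPG-onestep} says $\frac{1-\theta_{k+1}}{\theta_{k+1}^\gamma}\delta_{k+1} + L\,D_h(x,z_{k+1}) \leq \Delta_k$ for all $k\geq 0$, i.e. the recursion~\eqref{eqn:last-recursion} combined with~\eqref{eqn:theta-inequality}. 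The first step is to observe that the left-hand side at index $k+1$ is exactly $\Delta_{k+1}$, so $\{\Delta_k\}$ is nonincreasing: $\Delta_{k+1}\leq\Delta_k\leq\cdots\leq\Delta_0$. Hence for every $k\geq 0$ we get $\frac{1-\theta_{k+1}}{\theta_{k+1}^\gamma}\delta_{k+1}\leq \Delta_{k+1}\leq \Delta_0$ (discarding the nonnegative term $L\,D_h(x,z_{k+1})$, which is nonnegative since $h$ is strictly convex).

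The second step is to evaluate $\Delta_0$. Since $\theta_0 = 1$ and $z_0 = x_0$, we have $\frac{1-\theta_0}{\theta_0^\gamma} = 0$, so the term involving $\delta_0$ vanishes and $\Delta_0 = L\,D_h(x,z_0) = L\,D_h(x,x_0)$. Therefore $\frac{1-\theta_{k+1}}{\theta_{k+1}^\gamma}\,\bigl(F(x_{k+1}) - F(x)\bigr) \leq L\,D_h(x,x_0)$, which rearranges to $F(x_{k+1}) - F(x) \leq \frac{\theta_{k+1}^\gamma}{1-\theta_{k+1}}\,L\,D_h(x,x_0)$. It remains to bound $\frac{\theta_{k+1}^\gamma}{1-\theta_{k+1}}$ from above using the hypothesis $\theta_k\leq\frac{\gamma}{k+\gamma}$.

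The third step is the elementary estimate: we must show $\frac{\theta_{k+1}^\gamma}{1-\theta_{k+1}}\leq\bigl(\frac{\gamma}{k+\gamma}\bigr)^\gamma$, equivalently $\frac{1-\theta_{k+1}}{\theta_{k+1}^\gamma}\geq\bigl(\frac{k+\gamma}{\gamma}\bigr)^\gamma = \frac{1}{\vartheta_k^\gamma}$ where $\vartheta_k = \frac{\gamma}{k+\gamma}$. But $\omega(\theta) := \frac{1-\theta}{\theta^\gamma}$ is monotone decreasing (as noted in the proof of Lemma~\ref{lem:theta-bound}), and the hypothesis gives $\theta_{k+1}\leq\vartheta_{k+1}\leq\vartheta_k$, hence $\omega(\theta_{k+1})\geq\omega(\vartheta_k) = \frac{1}{\vartheta_k^\gamma}$, which is exactly what we need. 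Plugging this in yields $F(x_{k+1}) - F(x)\leq\bigl(\frac{\gamma}{k+\gamma}\bigr)^\gamma L\,D_h(x,x_0)$ for all $k\geq 0$.

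There is no real obstacle here: Lemma~\ref{lem:ABPG-onestep} already does the heavy lifting, and the remaining work is a telescoping argument plus a one-line monotonicity estimate on $\omega$. The only points requiring a modicum of care are (i) checking that the term dropped in the telescoping ($L\,D_h(x,z_{k+1})$) is indeed nonnegative, which follows from strict (hence weak) convexity of $h$, and (ii) correctly handling the base case $\theta_0 = 1$ so that the $\delta_0$ contribution to $\Delta_0$ disappears — this is why the initialization $\theta_0 = 1$, $z_0 = x_0$ is chosen.
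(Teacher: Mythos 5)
Your first two steps match the paper's argument: telescoping Lemma~\ref{lem:ABPG-onestep} shows that your quantity $\Delta_k$ is nonincreasing, and $\Delta_0 = L\,D_h(x,x_0)$ by the initialization $\theta_0=1$, $z_0=x_0$. The gap is in your third step. You assert $\omega(\vartheta_k) = 1/\vartheta_k^\gamma$, but $\omega(\vartheta_k) = (1-\vartheta_k)/\vartheta_k^\gamma$, which is strictly smaller (it equals $0$ for $k=0$). More importantly, the inequality you actually need, $\frac{1-\theta_{k+1}}{\theta_{k+1}^\gamma} \geq \bigl(\frac{k+\gamma}{\gamma}\bigr)^\gamma$, is false in general: take the admissible choice $\theta_{k+1} = \vartheta_{k+1} = \frac{\gamma}{k+1+\gamma}$, which satisfies both~\eqref{eqn:theta-inequality} and the theorem's hypothesis. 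Then by~\eqref{eqn:simple-theta-k+1} the left side equals $(k+1)(k+1+\gamma)^{\gamma-1}/\gamma^\gamma$, which by the computation in Lemma~\ref{lem:theta-explicit} is at most --- not at least --- $(k+\gamma)^\gamma/\gamma^\gamma$. Concretely, for $\gamma=2$ and $k=0$ your claim reads $3\geq 4$, so your route only yields $F(x_1)-F(x)\leq \frac{4}{3}L\,D_h(x,x_0)$ rather than the claimed $L\,D_h(x,x_0)$. The structural reason is that condition~\eqref{eqn:theta-inequality} forces $\frac{1-\theta_{k+1}}{\theta_{k+1}^\gamma} \leq \frac{1}{\theta_k^\gamma}$: the coefficient multiplying $F(x_{k+1})-F(x)$ in the fully telescoped form of Lemma~\ref{lem:ABPG-onestep} is the \emph{smaller} of the two, so dividing through by it loses a factor that cannot be recovered from $\theta_{k+1}\leq\vartheta_{k+1}$ alone.

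The fix is what the paper does: stop one step short of the final form of the one-step lemma. The intermediate inequality~\eqref{eqn:last-recursion} gives $\frac{1}{\theta_k^\gamma}\bigl(F(x_{k+1})-F(x)\bigr) + L\,D_h(x,z_{k+1}) \leq \Delta_k$, and combining this with $\Delta_k\leq\Delta_0 = L\,D_h(x,x_0)$ and $D_h(x,z_{k+1})\geq 0$ yields $F(x_{k+1})-F(x)\leq \theta_k^\gamma L\,D_h(x,x_0)$. The hypothesis $\theta_k\leq\frac{\gamma}{k+\gamma}$ (note the index $k$, not $k+1$) then gives the stated constant immediately, with no further estimate on $\omega$ required.
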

\begin{proof}
A direct consequence of Lemma~\ref{lem:ABPG-onestep} is, for any $x\in\dom h$,
\[
    \frac{1-\theta_k}{\theta_k^\gamma} \bigl( F(x_k)- F(x)) 
    + L D_h(x, z_k)
    \leq \frac{1-\theta_0}{\theta_0}\bigl( F(x_0)- F(x)\bigr)
    +L D_h(x, z_0).
\] 
Combining with~\eqref{eqn:last-recursion}, we have
\[
    \frac{1}{\theta_k^\gamma} \bigl( F(x_{k+1})- F(x)) 
    + L D_h(x, z_{k+1})
    \leq \frac{1-\theta_0}{\theta_0}\bigl( F(x_0)- F(x)\bigr)
    +L D_h(x, z_0).
\] 
Using $D_h(x,z_{k+1})\geq 0$
and the initializations $\theta_0=1$ and $z_0=x_0$, we obtain
\[
\frac{1}{\theta_k^\gamma} \bigl( F(x_{k+1})- F(x)) \leq L D_h(x, z_0),
\] 
which implies
\[
     F(x_{k+1}) -  F(x) \leq \theta_k^\gamma L D_h(x, x_0).
\]
It remains to apply the condition $\theta_k\leq \frac{\gamma}{k+\gamma}$.
\end{proof}

\subsection{ABPG method with exponent adaptation}
\label{sec:ABPG-expo}

\begin{algorithm}[t]
\caption{ABPG method with exponent adaptation (ABPG-e)}
\label{alg:ABPG-e}
\linespread{1.2}\selectfont
\DontPrintSemicolon
\textbf{input:} initial point $x_0 \in \rint C$, $\gamma_0\geq 2$, 
$\gamma_\mathrm{min}\geq 0$, and $\delta>0$. \;
initialize: $z_0=x_0$, $\gamma_{-1}=\gamma_0$, and $\theta_0=1$. \;
\For{$k= 0,1,2,\dots $}{
$y_k = (1-\theta_k) x_k + \theta_k z_k$ \;
    \Repeat( for $t=0,1,2,\ldots$){
%$D_h(x_{k+1}, y_k)\leq \theta_k^{\gamma_k} D_h(z_{k+1},z_k)$ 
%$f(x_{k+1})\leq\ell(x_{k+1}|y_k)+\theta_k^{\gamma_k}L D_h(z_{k+1},z_k)$
$f(x_{k+1})\leq f(y_k) + \langle\nabla f(y_k), x_{k+1}-y_k\rangle +\theta_k^{\gamma_k}L D_h(z_{k+1},z_k)$
}{
$\gamma_k = \max\{\gamma_{k-1} - \delta t,~\gamma_\mathrm{min}\}$ \;
$z_{k+1} = \argmin_{z \in C} \bigl\{ \ell(z|y_k) 
                + \theta_k^{\gamma_k-1} L D_h(z,z_k)\bigr\}$ \;
$x_{k+1} = (1-\theta_k) x_k + \theta_k z_{k+1}$ \;
} 
\vspace{0.5ex}
choose $\theta_{k+1}\in(0,1]$ such that
$\frac{1-\theta_{k+1}}{\theta_{k+1}^{\gamma_k}}\leq\frac{1}{\theta_k^{\gamma_k}}$\;
\vspace{-0.5ex}
}
\end{algorithm}

The best convergence rate of the ABPG method is obtained with the largest 
uniform TSE for the Bregman distance.
Since it is often hard to determine the largest TSE, we present
in Algorithm~\ref{alg:ABPG-e} a variant of the ABPG method with automatic
exponent adaptation, called the ABPG-e method. 

This method starts with a large $\gamma_0\geq 2$.
During each iteration~$k$, it reduces $\gamma_k$ 
by a small amount $\delta>0$ until some stopping criterion is satisfied.
An obvious choice for the stopping criterion is 
the local triangle-scaling property 
\begin{equation}\label{eqn:local-ts}
D_h(x_{k+1}, y_k)\leq \theta_k^{\gamma_k} D_h(z_{k+1},z_k),
\end{equation}
where $x_{k+1}=(1-\theta_k)x_k + \theta_k z_{k+1}$
and $y_k=(1-\theta_k)x_k + \theta_k z_k$.
According to the proof of Lemma~\ref{lem:ABPG-onestep}, we can 
also use the inequality~\eqref{eqn:ABPG-triangle} as stopping criterion,
which is implied by~\eqref{eqn:local-ts} and the relatively smooth assumption.
For convergence analysis, we only need~\eqref{eqn:ABPG-triangle} to hold, 
which can be less conservative than~\eqref{eqn:local-ts}.
In Algorithm~\ref{alg:ABPG-e}, we use the following inequality as the
stopping criterion
\[
f(x_{k+1})\leq f(y_k) + \langle\nabla f(y_k), x_{k+1}-y_k\rangle +\theta_k^{\gamma_k} L D_h(z_{k+1}, z_k),
\]
which is equivalent to~\eqref{eqn:ABPG-triangle} 
(by subtracting $\Psi(x_{k+1})$ from both sides of the inequality).
In practice, this condition often leads to much faster convergence than 
using~\eqref{eqn:local-ts}.
Computationally, it is slightly more expensive since it needs to evaluate
$f(x_{k+1})$ in addition to $\nabla f(y_k)$ during each inner loop, 
while~\eqref{eqn:local-ts} does not.

The lower bound $\gamma_\mathrm{min}$ can be any known uniform TSE, which
guarantees that the stopping criterion can always be satisfied.
Without such prior information, we can simply set $\gamma_\mathrm{min}=0$.
Since $\gamma_{k+1}\leq\gamma_k$ and $\theta_{k+1}\in (0,1)$, we always have
$\theta_{k+1}^{\gamma_{k+1}} \geq \theta_{k+1}^{\gamma_k}$. Therefore
\[
    \frac{1-\theta_{k+1}}{\theta_{k+1}^{\gamma_{k+1}}}
    \leq \frac{1-\theta_{k+1}}{\theta_{k+1}^{\gamma_k}}
    \leq \frac{1}{\theta_k^{\gamma_k}} .
\]
By replacing inequality~\eqref{eqn:theta-inequality} with the one above and
repeating the analysis in Section~\ref{sec:ABPG-analysis},
we obtain the following result.

\begin{theorem}
Suppose Assumption~\ref{asmp:all} holds,
$f$ is $L$-smooth relative to~$h$ on~$C$, 
and $\gamma_\mathrm{min}$ is a uniform TSE of~$D_h$.
Then the sequences generated by Algorithm~\ref{alg:ABPG-e} satisfy,
for any $x\in\dom h$, 
\[
     F(x_{k+1}) -  F(x) \leq 
    \left(\frac{\gamma_k}{k+\gamma_k}\right)^{\gamma_k} L D_h(x, x_0),
    \qquad \forall\, k\,\geq 0.
\]
\end{theorem}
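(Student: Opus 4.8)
The plan is to mirror the analysis of Section~\ref{sec:ABPG-analysis} essentially verbatim, replacing the fixed exponent~$\gamma$ at each outer iteration~$k$ by the adapted exponent~$\gamma_k$ produced by the inner loop. There are three things to check: (i) the inner loop always terminates, so that $\gamma_k$ and the pair $(z_{k+1},x_{k+1})$ are well defined; (ii) the one-step recursion of Lemma~\ref{lem:ABPG-onestep} still holds with $\gamma_k$ in place of~$\gamma$; and (iii) the exponent-adapted replacement of the $\theta$-condition~\eqref{eqn:theta-inequality} (stated just before the theorem) is strong enough to telescope the recursion and to recover the rate via a bound on $\theta_k$.

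First I would argue termination of the inner loop. As a function of the inner counter~$t$, $\gamma_k=\max\{\gamma_{k-1}-\delta t,\gamma_\mathrm{min}\}$ is non-increasing and equals $\gamma_\mathrm{min}$ as soon as $\delta t\ge\gamma_{k-1}-\gamma_\mathrm{min}$. For that value of the exponent, relative smoothness~\eqref{eqn:relative-smooth} applied to $x_{k+1}$ and $y_k$ gives $f(x_{k+1})\le f(y_k)+\langle\nabla f(y_k),x_{k+1}-y_k\rangle+L\,D_h(x_{k+1},y_k)$, while the triangle-scaling property~\eqref{eqn:triangle-scaling} with the \emph{uniform} TSE $\gamma_\mathrm{min}$, applied to $x_{k+1}=(1-\theta_k)x_k+\theta_k z_{k+1}$ and $y_k=(1-\theta_k)x_k+\theta_k z_k$ (using well-posedness of the iterates), gives $D_h(x_{k+1},y_k)\le\theta_k^{\gamma_\mathrm{min}}D_h(z_{k+1},z_k)$; chaining the two is exactly the stopping criterion. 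Hence the loop exits, and at every outer iteration the exit condition
\[
f(x_{k+1})\le f(y_k)+\langle\nabla f(y_k),x_{k+1}-y_k\rangle+\theta_k^{\gamma_k}L\,D_h(z_{k+1},z_k)
\]
holds, which (adding $\Psi(x_{k+1})$ to both sides) is precisely inequality~\eqref{eqn:ABPG-triangle} with $\gamma$ replaced by~$\gamma_k$.

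Next I would re-run the proof of Lemma~\ref{lem:ABPG-onestep}. Starting from the displayed inequality instead of the majorization used for BPG, convexity of $\ell(\cdot\,|y_k)$ together with $x_{k+1}=(1-\theta_k)x_k+\theta_k z_{k+1}$, then Lemma~\ref{lem:basic-lemma-D} applied with $\varphi(\cdot)=\ell(\cdot\,|y_k)/(\theta_k^{\gamma_k-1}L)$, $z=z_k$, $z_+=z_{k+1}$, and finally the lower bound $\ell(\cdot\,|y_k)\le F$ from~\eqref{eqn:sanwitch}, yield after dividing by $\theta_k^{\gamma_k}$ the analogue of~\eqref{eqn:last-recursion},
\[
\frac{1}{\theta_k^{\gamma_k}}\bigl(F(x_{k+1})-F(x)\bigr)+L\,D_h(x,z_{k+1})\;\le\;\frac{1-\theta_k}{\theta_k^{\gamma_k}}\bigl(F(x_k)-F(x)\bigr)+L\,D_h(x,z_k).
\]
Since $\gamma_{k+1}\le\gamma_k$ and $\theta_{k+1}\in(0,1]$, we have $\frac{1-\theta_{k+1}}{\theta_{k+1}^{\gamma_{k+1}}}\le\frac{1-\theta_{k+1}}{\theta_{k+1}^{\gamma_k}}\le\frac{1}{\theta_k^{\gamma_k}}$, the last step being the $\theta$-update in Algorithm~\ref{alg:ABPG-e}; this is the exponent-adapted substitute for~\eqref{eqn:theta-inequality}. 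Applying it to the left-hand side above produces a recursion whose right-hand side at step~$k$ coincides with its left-hand side at step~$k-1$, so the recursions telescope down to $k=0$; then, exactly as in the proof of Theorem~\ref{thm:ABPG-rate}, using $\theta_0=1$, $z_0=x_0$ and $D_h(x,z_{k+1})\ge0$ gives $F(x_{k+1})-F(x)\le\theta_k^{\gamma_k}L\,D_h(x,x_0)$. It then remains to show $\theta_k\le\frac{\gamma_k}{k+\gamma_k}$, which I would obtain by reproducing Lemmas~\ref{lem:theta-explicit} and~\ref{lem:theta-bound}: the weighted arithmetic--geometric mean inequality~\eqref{eqn:wagmi2} shows $\gamma/(j+\gamma)$ satisfies the $\theta$-recursion for fixed~$\gamma$, and a monotonicity/induction argument transfers the bound to the adapted sequence using $\gamma_{k+1}\le\gamma_k$ to control the change of exponent.

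The step I expect to be most delicate is (iii): making the telescoping and the $\theta_k\le\gamma_k/(k+\gamma_k)$ bound rigorous when the exponent changes between consecutive iterations. The tension is that $\theta_{k+1}$ is selected using the exponent~$\gamma_k$ (the only one available at that point), whereas the target bound for step~$k+1$ features~$\gamma_{k+1}$; closing the induction seems to require pinning down a canonical choice of~$\theta_{k+1}$ (the root of $(1-\theta)/\theta^{\gamma_k}=1/\theta_k^{\gamma_k}$, as in~\eqref{eqn:theta-equation}) and carefully exploiting the monotonicity of $\theta\mapsto(1-\theta)/\theta^{\gamma}$ in both~$\theta$ and~$\gamma$, possibly inducting on~$\theta_k$ and~$\gamma_k$ jointly. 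Everything else is a routine transcription of Section~\ref{sec:ABPG-analysis}.
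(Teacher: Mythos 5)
Your overall route is the same one the paper takes --- its ``proof'' is essentially the single sentence ``replace \eqref{eqn:theta-inequality} by the chained inequality and repeat the analysis of Section~\ref{sec:ABPG-analysis}'' --- and parts (i) and (ii) of your plan are correct and complete: termination of the inner loop follows exactly as you say from relative smoothness plus the uniform TSE $\gamma_\mathrm{min}$, and the one-step recursion goes through verbatim with $\gamma_k$ in place of $\gamma$, yielding $F(x_{k+1})-F(x)\le\theta_k^{\gamma_k}L\,D_h(x,x_0)$ after telescoping with $\frac{1-\theta_{k+1}}{\theta_{k+1}^{\gamma_{k+1}}}\le\frac{1-\theta_{k+1}}{\theta_{k+1}^{\gamma_k}}\le\frac{1}{\theta_k^{\gamma_k}}$.

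The gap is exactly where you suspect it, in step (iii), and the fix you sketch does not close it. You need $\theta_k\le\frac{\gamma_k}{k+\gamma_k}$, but $\theta_k$ is produced at the end of iteration $k-1$ from the condition $\frac{1-\theta_k}{\theta_k^{\gamma_{k-1}}}\le\frac{1}{\theta_{k-1}^{\gamma_{k-1}}}$, i.e.\ with the \emph{older, larger} exponent $\gamma_{k-1}$, whereas $\gamma_k$ is only determined afterwards, inside iteration $k$. Since $\gamma\mapsto\gamma/(k+\gamma)$ is increasing, lowering the exponent from $\gamma_{k-1}$ to $\gamma_k$ \emph{shrinks} the target threshold, so the monotonicity you invoke via $\gamma_{k+1}\le\gamma_k$ runs in the wrong direction. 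Concretely, take $\gamma_0=3$ and suppose the inner loop at $k=1$ settles on $\gamma_1=2$: then $\theta_1$ must satisfy $(1-\theta_1)/\theta_1^{3}\le 1$, which (since $(1-\theta)/\theta^{\gamma}$ is decreasing in $\theta$) forces $\theta_1$ to be \emph{at least} the root of $\theta^3=1-\theta$, so $\theta_1\ge 0.682\ldots>2/3=\gamma_1/(1+\gamma_1)$, and $\theta_1^{\gamma_1}\ge 0.465\ldots>(2/3)^2\approx 0.444$; the claimed bound at $k=1$ is therefore not implied by $F(x_2)-F(x)\le\theta_1^{\gamma_1}L\,D_h(x,x_0)$, and no admissible choice of $\theta_1$ repairs this. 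What the argument actually delivers is an estimate involving the exponent history (e.g.\ a threshold of the form $\gamma_{k-1}/(k+\gamma_{k-1})$, or the stated bound once $\gamma_k$ has stopped decreasing). To be fair, the paper itself offers nothing beyond the one-sentence proof, so this difficulty is inherited from the source rather than introduced by you --- but as written, your induction cannot be closed.
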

The convergence rate of ABPG-e is determined by the last value~$\gamma_k$. 
Since we only need to satisfy the local triangle-scaling 
property~\eqref{eqn:local-ts} instead of the uniform 
condition~\eqref{eqn:triangle-scaling}, it is very likely that
$\gamma_k$ is greater than the largest uniform TSE.
However, according to Proposition~\ref{prop:intrinsic-TSE}, when $k\to\infty$,
the limit of $\gamma_k$ (which always exists) cannot be larger than 
the intrinsic TSE $\gammain=2$.
In any case, $\gamma_k$ itself is a \emph{numerical certificate} of an 
empirical convergence rate of $O(k^{-\gamma_k})$, 
albeit one that depends on~$k$.
We always have $\gamma_k \geq \max\{\gamma-\delta,\,\gamma_\mathrm{min}\}$
where $\gamma$ is the uniform TSE.
In our numerical experiments in Section~\ref{sec:experiments}, 
the numerical certificate $\gamma_k$ is mostly close to $\gammain=2$. 

Compared with ABPG, each iteration of ABPG-e may invoke an inner loop that requires additional computation. However, for finishing the same number of~$k$ iterations, the number of extra steps performed by ABPG-e is at most $(\gamma_0-\gamma_{\mathrm{min}})/\delta$. In practice, we always pick $\delta\geq 0.1$, thus the number of extras steps is a constant of at most a few tens, regardless of the number of iterations~$k$.

\section{ABPG methods with gain adaptation}
\label{sec:ABPG-gain}

In this section, we present and analyze an adaptive ABPG method based on 
the concept of intrinsic TSE developed in Section~\ref{sec:intrinsic-TSE}.
Instead of searching for the largest uniform TSE 
as in Algorithm~\ref{alg:ABPG-e}, we can replace line~\ref{lnl:ABPG-zk}
in Algorithm~\ref{alg:ABPG} by
\[
z_{k+1} = \argmin_{z \in C} \left\{ \ell(z|y_k) 
+ G_k \theta_k^{\gamma-1} L D_h(z,z_k)\right\}
\]
and adjust the additional gain $G_k$ while keeping $\gamma=\gammain$ fixed.

Algorithm~\ref{alg:ABPG-g} is such a method with gain adaptation.
During each iteration, the algorithm uses an inner loop to search for an
value of $G_k$ that satisfies
\begin{equation}\label{eqn:func-stop}
f(x_{k+1})\leq f(y_k) + \langle\nabla f(y_k), x_{k+1}-y_k\rangle + G_k\theta_k^{\gamma} L D_h(z_{k+1}, z_k),
\end{equation}
which is true if the following local triangle-scaling property holds:
\begin{equation}\label{eqn:Gk-intrinsic-ts}
D_h(x_{k+1},y_k) 
= D_h\bigl((1-\theta)x_k+\theta z_{k+1}, (1-\theta)x_k+\theta z_k\bigr)
\leq G_k \theta^{\gamma} D_h(z_{k+1},z_k).
\end{equation}
By definition of the intrinsic TSE, such a $G_k$ 
(as a function of $x_{k+1}$, $z_k$ and $z_{k+1}$) 
always exists for $\gamma=\gammain$,
i.e., the stopping criterion for gain adaptation in Algorithm~\ref{alg:ABPG-g} 
can always be satisfied.
In order to obtain fast convergence, we want the value of $G_k$ to be 
as small as possible while still satisfying~\eqref{eqn:func-stop}. 
Therefore, at the beginning of each iteration~$k$, we always try a 
tentative gain that is no larger than $G_{k-1}$: 
$M_k= \max\{ G_{k-1}/\decfac, ~G_\mathrm{min}\}$ with $\rho>1$.
The gain adaptation loop finds the smallest integer $t\geq 0$
such that $G_k=M_k\rho^t$ satisfies the inequality~\eqref{eqn:func-stop}.

Another major difference between Algorithm~\ref{alg:ABPG-g} and the previous
variants of ABPG is that the sequence
$\{\theta_k\}_{k\in\nats}$ in Algorithm~\ref{alg:ABPG-g}
is generated by solving the equation
\begin{equation}\label{eqn:theta-equality-Gk}
\frac{1-\theta_{k+1}}{G_{k+1}\theta_{k+1}^\gamma}
= \frac{1}{G_k\theta_k^\gamma}.
\end{equation}
While an obvious choice is to set $G_\mathrm{min}=1$, we usually set it to 
be much smaller (say $G_\mathrm{min}=10^{-3}$), 
which allows the algorithm to converge much faster.
Since we don't have a priori upper bound on $G_k$, it is hard to 
characterize how fast $\theta_k$ converges to zero.
In fact, $\{\theta_k\}_{k\in\nats}$ may not be a monotone decreasing sequence.
Instead of tracking~$G_k$ and~$\theta_k$ separately, 
we analyze the convergence of the combined quantity $G_k\theta_k^\gamma$.
The following simple lemma will be very useful.

\begin{algorithm}[t]
\caption{ABPG method with gain adaptation (ABPG-g)}
\label{alg:ABPG-g}
\linespread{1.2}\selectfont
\DontPrintSemicolon
\textbf{input:} initial points $x_0 \in C$, $\gamma>1$, 
$\rho>1$ and $G_\mathrm{min}>0$. \;
\textbf{initialize:} $z_0=x_0$, $\theta_0=1$ and $G_{-1}=1$\;
\For{$k= 0,1,2,\dots $}{
    $M_k = \max\{ G_{k-1}/\decfac, ~G_\mathrm{min}\}$ \;
    \Repeat( for $t=0,1,2,\ldots$){
%$D_h(x_{k+1}, y_k)\leq G_k \theta_k^\gamma D_h(z_{k+1},z_k)$
$f(x_{k+1})\leq f(y_k) + \langle\nabla f(y_k), x_{k+1}-y_k\rangle +G_k\theta_k^\gamma L D_h(z_{k+1},z_k)$
    }{
        $G_k = M_k \incfac^{t}$ \;
        \lIf{$k>0$}{compute $\theta_k$ by solving 
        {\small $\displaystyle\frac{1-\theta_k}{G_k\theta_k^\gamma}=\frac{1}{G_{k-1}\theta_{k-1}^\gamma}$}}
        \vspace{-1ex}
        $y_k = (1-\theta_k) x_k + \theta_k z_k$ \;
        $z_{k+1} = \argmin_{z \in C} \left\{ \ell(z|y_k) 
        + G_k \theta_k^{\gamma-1} L D_h(z,z_k)\right\}$ \;
        $x_{k+1} = (1-\theta_k) x_k + \theta_k z_{k+1}$ \;
    }
}
\end{algorithm}

\begin{lemma}\label{lem:power-diff}
For any $\alpha,\beta>0$ and $\gamma\geq1$, the following inequality holds:
\[
\alpha^\gamma - \beta^\gamma ~\leq~ \gamma(\alpha-\beta)\alpha^{\gamma-1}.
\]
\end{lemma}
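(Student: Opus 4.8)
The plan is to recognize this as the standard gradient (subgradient) inequality for the convex function $\phi(t)=t^\gamma$ on $(0,\infty)$. For $\gamma\geq 1$ this function is differentiable with $\phi'(t)=\gamma t^{\gamma-1}$, and $\phi''(t)=\gamma(\gamma-1)t^{\gamma-2}\geq 0$, so $\phi$ is convex on $(0,\infty)$. Convexity gives, for any $\alpha,\beta>0$,
\[
\phi(\beta)\;\geq\;\phi(\alpha)+\phi'(\alpha)(\beta-\alpha)
\;=\;\alpha^\gamma+\gamma\alpha^{\gamma-1}(\beta-\alpha).
\]
Rearranging this inequality and negating yields exactly $\alpha^\gamma-\beta^\gamma\leq \gamma(\alpha-\beta)\alpha^{\gamma-1}$, which is the claim.

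An alternative route, if one prefers to avoid invoking the gradient inequality directly, is the mean value theorem: there is some $\xi$ strictly between $\alpha$ and $\beta$ with $\alpha^\gamma-\beta^\gamma=\gamma\xi^{\gamma-1}(\alpha-\beta)$. Since $\gamma-1\geq 0$, the map $t\mapsto t^{\gamma-1}$ is nondecreasing on $(0,\infty)$. If $\alpha\geq\beta$ then $\alpha-\beta\geq 0$ and $\xi\leq\alpha$, so $\xi^{\gamma-1}\leq\alpha^{\gamma-1}$ and the product bound follows; if $\alpha<\beta$ then $\alpha-\beta<0$ and $\xi\geq\alpha$, so $\xi^{\gamma-1}\geq\alpha^{\gamma-1}$, and multiplying by the negative factor $\alpha-\beta$ again yields the stated inequality. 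The degenerate cases $\gamma=1$ or $\alpha=\beta$ hold trivially (with equality).

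There is essentially no hard part here; the only thing requiring any care is the case split in the mean-value-theorem argument (getting the direction of the inequality right after multiplying by a possibly negative quantity), which the convexity proof sidesteps entirely. I would therefore present the one-line convexity argument as the proof.
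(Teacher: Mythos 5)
Your proof is correct, but it takes a different route from the paper. You invoke the first-order convexity (gradient) inequality for $\phi(t)=t^\gamma$ at the point $\alpha$, which gives the claim in one line; your mean-value-theorem variant is also sound, including the case split on the sign of $\alpha-\beta$. The paper instead rearranges the target inequality into the equivalent form $\alpha^{\gamma-1}\beta \leq \bigl((\gamma-1)\alpha^\gamma + \beta^\gamma\bigr)/\gamma$ and then applies the weighted arithmetic--geometric mean inequality~\eqref{eqn:wagmi2} with weights $\gamma-1$ and $1$ on $\alpha^\gamma$ and $\beta^\gamma$. The two arguments are closely related in substance (weighted AM--GM is itself a concavity statement), but the mechanics differ: the paper's choice has the virtue of reusing inequality~\eqref{eqn:wagmi2}, which it has already stated and used in the proof of Lemma~\ref{lem:theta-explicit}, so no new tool is introduced; your convexity argument is arguably more direct, avoids the algebraic rearrangement, and makes transparent why the inequality is tight at $\alpha=\beta$. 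Either proof would serve equally well here.
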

\begin{proof}
The case of $\gamma=1$ is obvious. Assume $\gamma>1$.
The desired inequality is equivalent to
\[
\alpha^{\gamma-1}\beta 
~\leq~ \frac{(\gamma-1)\alpha^\gamma + \beta^\gamma}{\gamma}
~=~ \frac{(\gamma-1)\alpha^\gamma + 1\cdot \beta^\gamma}{(\gamma-1) + 1}.
\]
Applying the weighted arithmetic and geometric mean 
inequality~\eqref{eqn:wagmi2}, we have
\[
    \frac{(\gamma-1)\alpha^\gamma + 1\cdot\beta^\gamma}{(\gamma-1)+1}
~\geq~\Bigl((\alpha^\gamma)^{\gamma-1}(\beta^\gamma)^{1}\Bigr)^\frac{1}{\gamma}
~=~ \alpha^{\gamma-1}\beta ,
\]
which completes the proof.
\end{proof}

\begin{theorem}\label{thm:adapt-G}
Suppose Assumption~\ref{asmp:all} holds,
$f$ is $L$-smooth relative to~$h$ on~$C$, 
and $\gamma=\gammain$ is the intrinsic TSE of~$D_h$.
Then the sequences generated by Algorithm~\ref{alg:ABPG-g} satisfy,
for any $x\in\dom h$, 
\begin{equation}\label{eqn:ABPG-g-rate}
 F(x_{k+1})- F(x) ~\leq~ \left(\frac{\gamma}{k+\gamma}\right)^\gamma
\overline{G}_k L D_h(x, x_0),
\qquad \forall\, k\geq 0,
\end{equation}
where $\overline{G}_k$ is a weighted geometric mean of the gains at each step:
\begin{equation}\label{eqn:wgm-Gk}
\overline{G}_k~=~\left(G_0^\gamma G_1\cdots G_k\right)^{\frac{1}{k+\gamma}}.
\end{equation}
\end{theorem}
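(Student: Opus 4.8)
The plan is to reuse the architecture of the proof of Theorem~\ref{thm:ABPG-rate} — a one-step recursion followed by a telescoping estimate — and then add a third argument that controls the combined quantity $G_k\theta_k^{\gamma}$ by its running weighted geometric mean. First I would establish the per-iteration inequality by mimicking the proof of Lemma~\ref{lem:ABPG-onestep} almost verbatim, with $\theta_k^{\gamma}L$ and $\theta_k^{\gamma-1}L$ replaced everywhere by $G_k\theta_k^{\gamma}L$ and $G_k\theta_k^{\gamma-1}L$: adding $\Psi(x_{k+1})$ to both sides of the acceptance test~\eqref{eqn:func-stop} gives $F(x_{k+1})\le\ell(x_{k+1}|y_k)+G_k\theta_k^{\gamma}LD_h(z_{k+1},z_k)$, which plays the role of~\eqref{eqn:ABPG-triangle}; then convexity of $\ell(\cdot|y_k)$ with $x_{k+1}=(1-\theta_k)x_k+\theta_kz_{k+1}$, Lemma~\ref{lem:basic-lemma-D} applied with $\varphi(z)=\ell(z|y_k)/(G_k\theta_k^{\gamma-1}L)$, and the sandwich~\eqref{eqn:sanwitch} yield, for every $x\in\dom h$,
\[
\frac{1}{G_k\theta_k^{\gamma}}\bigl(F(x_{k+1})-F(x)\bigr)+LD_h(x,z_{k+1})\;\le\;\frac{1-\theta_k}{G_k\theta_k^{\gamma}}\bigl(F(x_k)-F(x)\bigr)+LD_h(x,z_k).
\]
Since the $\theta$-update~\eqref{eqn:theta-equality-Gk} is exactly $\frac{1-\theta_k}{G_k\theta_k^{\gamma}}=\frac{1}{G_{k-1}\theta_{k-1}^{\gamma}}$ for $k\ge1$, the right-hand side equals the left-hand side at index $k-1$; for $k=0$ it equals $LD_h(x,z_0)=LD_h(x,x_0)$ because $\theta_0=1$ and $z_0=x_0$. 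Telescoping down to $k=0$ and dropping $LD_h(x,z_{k+1})\ge0$ gives $F(x_{k+1})-F(x)\le G_k\theta_k^{\gamma}LD_h(x,x_0)$ for all $k\ge0$.

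It then remains to show that $G_k\theta_k^{\gamma}\le\bigl(\frac{\gamma}{k+\gamma}\bigr)^{\gamma}\overline{G}_k$, which is the \emph{crux}. I would set $u_k:=G_k\theta_k^{\gamma}$ and $b_k:=u_k^{-1/\gamma}$. The update~\eqref{eqn:theta-equality-Gk} is equivalent to $u_k=u_{k-1}(1-\theta_k)$, so $0<u_k<u_{k-1}$, $b_k>b_{k-1}>0$, and $\theta_k=(u_k/G_k)^{1/\gamma}=b_k^{-1}G_k^{-1/\gamma}$. Rewriting the update as $b_k^{\gamma}-b_{k-1}^{\gamma}=b_k^{\gamma}\theta_k=b_k^{\gamma-1}G_k^{-1/\gamma}$ and applying Lemma~\ref{lem:power-diff} with $\alpha=b_k$, $\beta=b_{k-1}$ gives $b_k^{\gamma-1}G_k^{-1/\gamma}\le\gamma(b_k-b_{k-1})b_k^{\gamma-1}$, hence $b_k-b_{k-1}\ge\frac{1}{\gamma}G_k^{-1/\gamma}$ for $k\ge1$. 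Summing from $1$ to $k$, using $b_0=u_0^{-1/\gamma}=G_0^{-1/\gamma}$ (because $\theta_0=1$), and then applying the weighted arithmetic–geometric mean inequality~\eqref{eqn:wagmi2} in its several-term form to $G_0^{-1/\gamma},\dots,G_k^{-1/\gamma}$ with weights $\gamma,1,\dots,1$ (total weight $k+\gamma$), I obtain
\[
b_k\;\ge\;\frac{1}{\gamma}\Bigl(\gamma G_0^{-1/\gamma}+\sum_{j=1}^{k}G_j^{-1/\gamma}\Bigr)\;\ge\;\frac{k+\gamma}{\gamma}\bigl(G_0^{\gamma}G_1\cdots G_k\bigr)^{-1/(\gamma(k+\gamma))}\;=\;\frac{k+\gamma}{\gamma}\,\overline{G}_k^{-1/\gamma}.
\]
Raising to the power $-\gamma$ yields $u_k\le\bigl(\frac{\gamma}{k+\gamma}\bigr)^{\gamma}\overline{G}_k$; combined with the previous paragraph this proves~\eqref{eqn:ABPG-g-rate}.

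The first two steps are routine once one observes that~\eqref{eqn:theta-equality-Gk} is designed precisely so the recursion telescopes. The real obstacle is the last step: because $\{\theta_k\}$ need not be monotone and no a priori upper bound on $G_k$ is known, Lemma~\ref{lem:theta-bound} cannot be reused. The two ideas that unlock it are (i) passing to $b_k=(G_k\theta_k^{\gamma})^{-1/\gamma}$, which via Lemma~\ref{lem:power-diff} converts the multiplicative $\theta$-recursion into the additive lower bound $b_k-b_{k-1}\ge\gamma^{-1}G_k^{-1/\gamma}$; and (ii) recognizing that the weighted geometric mean appearing in $\overline{G}_k$ is exactly what the weighted AM–GM inequality delivers from $\{G_j^{-1/\gamma}\}$, with the anomalous weight $\gamma$ on the $j=0$ term arising from $\theta_0=1$ (which makes $b_0=G_0^{-1/\gamma}$ count with multiplicity $\gamma$). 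Well-posedness of the inner loop is not needed in the proof: for $\gamma=\gammain$ the existence of an accepting $G_k$ follows from the definition of the intrinsic TSE.
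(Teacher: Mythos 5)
Your proposal is correct and follows essentially the same route as the paper's proof: the same modified one-step recursion telescoped via the update \eqref{eqn:theta-equality-Gk}, then a bound on $G_k\theta_k^\gamma$ obtained by applying Lemma~\ref{lem:power-diff} to the $1/\gamma$-th powers of $1/(G_k\theta_k^\gamma)$ and finishing with the weighted AM--GM inequality with weights $\gamma,1,\dots,1$. The only difference is cosmetic: you work with $b_k=(G_k\theta_k^\gamma)^{-1/\gamma}$ directly and get $b_k-b_{k-1}\ge\gamma^{-1}G_k^{-1/\gamma}$ in one step, whereas the paper passes through $A_k$ and $a_{k+1}$ with exponents $1/(\gamma-1)$ before reaching the identical inequality.
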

\begin{proof}
We follow the same steps as in Section~\ref{sec:ABPG-analysis}. 
In light of~\eqref{eqn:Gk-intrinsic-ts}, 
the inequality~\eqref{eqn:ABPG-triangle} becomes
\begin{equation}\label{eqn:ABPG-m-rs-ts}
 F(x_{k+1}) \leq \ell(x_{k+1}|y_k) + G_k\theta_k^\gamma L D_h(z_{k+1},z_k),
\end{equation}
and the inequality~\eqref{eqn:last-recursion} becomes
\begin{equation}\label{eqn:converge-pre-G}
\frac{1}{G_k\theta_k^\gamma}\bigl( F(x_{k+1})- F(x)\bigr)+L D_h(x,z_{k+1}) 
~\leq~ \frac{1-\theta_k}{G_k\theta_k^\gamma} \bigl( F(x_k) -  F(x) \bigr)
    + L D_h(x, z_k). 
\end{equation}
Plugging in the equality~\eqref{eqn:theta-equality-Gk}, we obtain
\begin{equation}\label{eqn:converge-recursion-G}
\frac{1-\theta_{k+1}}{G_{k+1}\theta_{k+1}^\gamma} \bigl( F(x_{k+1})- F(x)\bigr)+L D_h(x, z_{k+1}) 
~\leq~ \frac{1-\theta_k}{G_k\theta_k^\gamma} \bigl( F(x_k) -  F(x) \bigr)
    + L D_h(x, z_k). 
\end{equation}
Then the same arguments in the proof of Theorem~\ref{thm:ABPG-rate} lead to
\begin{equation}\label{eqn:Gk-thetak-bound}
 F(x_{k+1})- F(x) ~\leq~ G_k\theta_k^\gamma L D_h(x, x_0).
\end{equation}

Next we derive an upper bound for $G_k\theta_k^\gamma$. 
For convenience, let's define for $k=0,1,2,\ldots$,
\[
A_k = \frac{1}{G_k\theta_k^\gamma}, \qquad
a_{k+1} = \frac{1}{G_{k+1}\theta_{k+1}^{\gamma-1}}. 
\]
Then~\eqref{eqn:theta-equality-Gk} implies $a_{k+1} = A_{k+1}-A_k$. 
Moreover, we have
\begin{equation}\label{eqn:Ak1-Ak}
A_{k+1} ~=~ \frac{1}{G_{k+1}\theta_{k+1}^\gamma}
~=~ G_{k+1}^{\frac{1}{\gamma-1}} a_{k+1}^{\frac{\gamma}{\gamma-1}}
~=~ 
G_{k+1}^{\frac{1}{\gamma-1}} \Bigl(A_{k+1}-A_k\Bigr)^{\frac{\gamma}{\gamma-1}}.
\end{equation}
Applying Lemma~\ref{lem:power-diff} with $\alpha=A_{k+1}^{1/\gamma}$
and $\beta=A_k^{1/\gamma}$, we obtain
\[
A_{k+1} - A_k
~=~ \Bigl(A_{k+1}^\frac{1}{\gamma}\Bigr)^\gamma
    - \Bigl(A_k^\frac{1}{\gamma}\Bigr)^\gamma 
~\leq~ \gamma\Bigl( A_{k+1}^\frac{1}{\gamma} - A_k^\frac{1}{\gamma}\Bigr)
    A_{k+1}^{\frac{\gamma-1}{\gamma}} .
\]
Combining with~\eqref{eqn:Ak1-Ak} yields
\[
A_{k+1} 
~=~G_{k+1}^{\frac{1}{\gamma-1}}\Bigl(A_{k+1}-A_k\Bigr)^{\frac{\gamma}{\gamma-1}}
~\leq~G_{k+1}^{\frac{1}{\gamma-1}} \gamma^{\frac{\gamma}{\gamma-1}}
\Bigl( A_{k+1}^\frac{1}{\gamma} - A_k^\frac{1}{\gamma}\Bigr)^{\frac{\gamma}{\gamma-1}}
A_{k+1}
\]
We can eliminate the common factor $A_{k+1}$ on both sides of the above 
inequality to obtain
\[
1 ~\leq~ G_{k+1}^{\frac{1}{\gamma-1}} \gamma^{\frac{\gamma}{\gamma-1}}
\Bigl( A_{k+1}^\frac{1}{\gamma} - A_k^\frac{1}{\gamma}\Bigr)^{\frac{\gamma}{\gamma-1}},
\]
which implies
\[
A_{k+1}^\frac{1}{\gamma} - A_k^\frac{1}{\gamma}
~\geq~ \frac{1}{\gamma\, G_{k+1}^{1/\gamma}},
\qquad k=0,1,2,\ldots.
\]
Summing the above inequality from step~$0$ to~$k-1$ and using $A_0=1/G_0$, 
we have
\[
A_k^{\frac{1}{\gamma}} 
~\geq~ \sum_{t=1}^k\frac{1}{\gamma\,G_t^{1/\gamma}}+A_0^{\frac{1}{\gamma}}
~=~ \sum_{t=1}^k\frac{1}{\gamma\, G_t^{1/\gamma}} + \frac{1}{G_0^{1/\gamma}}
~=~ \frac{1}{\gamma}\left(\sum_{t=1}^k\frac{1}{G_t^{1/\gamma}} 
    + \frac{\gamma}{G_0^{1/\gamma}}\right).
\]
Using the weighted arithmetic and geometric mean inequality
(e.g., \cite[Section~2.5]{HLP:52}) gives
\[
\sum_{t=1}^k\frac{1}{G_t^{1/\gamma}} + \frac{\gamma}{G_0^{1/\gamma}}
~\geq~ (k+\gamma)\left(\biggl(\frac{1}{G_0^{1/\gamma}}\biggr)^\gamma 
    \frac{1}{G_1^{1/\gamma}} \cdots \frac{1}{G_k^{1/\gamma}}
    \right)^{\frac{1}{k+\gamma}}
=~(k+\gamma)\Bigl(G_0^\gamma G_1\cdots G_k\Bigr)^{\frac{-1}{\gamma(k+\gamma)}}.
\]
Combining the last two inequalities above, we arrive at
\[
A_k ~\geq~ \left(\frac{k+\gamma}{\gamma}\right)^\gamma
\left(G_0^\gamma G_1\cdots G_k\right)^{\frac{-1}{k+\gamma}}.
\]
Therefore, 
\[
G_k\theta_k^\gamma ~=~ \frac{1}{A_k}
~\leq~ \left(\frac{\gamma}{k+\gamma}\right)^\gamma
\left(G_0^\gamma G_1\cdots G_k\right)^{\frac{1}{k+\gamma}}.
\]
Finally, substituting the inequality above
into~\eqref{eqn:Gk-thetak-bound} gives the desired result.
\end{proof}

We note that the geometric mean $\overline{G}_k$ in~\eqref{eqn:wgm-Gk} can be 
much smaller than the average (arithmetic mean) of $\{G_0,G_1,\ldots,G_k\}$.
Under the assumption of uniform Lipschitz smoothness~\eqref{eqn:uniform-smooth},
Nesterov \cite{Nesterov13composite} proposed an accelerated gradient method
with non-monotone line search. However, the complexity obtained there still
depends on the global Lipschitz constant $L$, more specifically, replacing
$\overline{G}_k L$ in~\eqref{eqn:ABPG-g-rate} with $\rho L$ when $\gamma=2$.
Our result in~\eqref{eqn:ABPG-g-rate} can be tighter if the local
Lipschitz constants are smaller than $L$ (equivalently with $G_k<1$).

\paragraph{Total number of oracle calls.}\label{remark:ABPG-e-oracles}
In order to estimate the overhead of the gain-adaptation procedure,
we follow the approach of \cite[Lemma~4]{Nesterov13composite}.
Notice that each inner loop needs to call a gradient oracle to compute
$\nabla f(y_k)$, and also $f(x_{k+1})$ when we use~\eqref{eqn:func-stop}
as the stopping criterion for gain adaptation.
Let $n_i\geq 1$ be the number of calls of the oracle (for $\nabla f(y_k)$) at
the $i$th iteration, for $i=0, \ldots, k$. Then
\[
G_{i+1} = \max\{G_i/\rho, G_\mathrm{min}\} \rho^{n_i-1} \geq G_i \rho^{n_i-2}, 
\qquad i=0,\ldots, k-1.
\]
Thus
\[
n_i \leq 2 + \log_{\rho}\frac{G_{i+1}}{G_i} 
= 2 + \frac{1}{\ln\rho}\ln\frac{G_{i+1}}{G_i} .
\]
Therefore, the total number of oracle calls is
\[
N_k = \sum_{i=0}^k n_i 
\leq \sum_{i=0}^k \left(2 + \frac{1}{\ln\rho}\ln\frac{G_{i+1}}{G_i} \right)
= 2(k+1) + \frac{1}{\ln\rho}\ln\frac{G_k}{G_0} .
\]
Roughly speaking, on average each iteration need two oracle calls
(unless $G_k$ becomes very large).

\paragraph{An explicit update rule for $\theta_k$.}
As an alternative to calculating $\theta_{k+1}$ by solving the 
equation~\eqref{eqn:theta-equality-Gk}, we can also use the following
explicit update rule:
\[
\frac{1}{\theta_{k+1}} 
~=~ \frac{\gamma\alpha_k}{1+\alpha_k(\gamma-1)}\frac{1}{\theta_k}
+ \frac{1}{1+\alpha_k(\gamma-1)}, 
\]
where $\alpha_k=G_{k+1}/G_k$ for $k=0,1,2,\dots$.
This recursion is obtained by solving a linearized equation 
of~\eqref{eqn:theta-equality-Gk}.
In particular, if $\alpha_k=1$ for all $k\geq 0$, then this formula
produces $\theta_k=\gamma/(k+\gamma)$.
The sequence $\{\theta_k\}_{k\in\nats}$ generated this way
satisfies an inequality obtained by replacing the ``$=$'' sign with ``$\leq$''
in~\eqref{eqn:theta-equality-Gk}. 
Although Theorem~\ref{thm:adapt-G} does not apply to this sequence,
it often has comparable or even faster performance in practice, 
especially when the $\alpha_k$'s are close to~1.

\subsection{Towards the $O(k^{-2})$ convergence rate}
\label{sec:towards-k-2}

Proposition~\ref{prop:intrinsic-TSE} shows that the intrinsic TSE $\gammain=2$ 
for all Bregman distances $D_h$ where~$h$ is convex and twice continuously
differentiable. This covers most Bregman distances of practical interest.
If we run the ABPG-g method (Algorithm~\ref{alg:ABPG-g}) with $\gamma=2$, 
then Theorem~\ref{thm:adapt-G} states that the convergence rate is 
$O(\overline{G}_k k^{-2})$.
In order to obtain the $O(k^{-2})$ convergence rate, we need 
$\overline{G}_k$ to be $O(1)$.
In this subsection, we discuss the asymptotic behavior of $G_k$, which dominate the geometric mean $\overline{G}_k$ when $k\to \infty$. 

We focus on the concrete case of KL divergence, which is a widely used in first-order optimization algorithms.
For the KL divergence, $h(x)=\sum_{i=1}^n x^{(i)}\log x^{(i)}$ and 
$\nabla^2 h(x) = \mathrm{diag}\bigl(\frac{1}{x^{(1)}}, \ldots, \frac{1}{x^{(n)}}\bigr)$, thus according to the derivations in Section~\ref{sec:bound-ts-gain},
\begin{equation}\label{eqn:gain-KL-bd}
G_{\theta_k}(x_k, z_k, z_{k+1}) 
%=\frac{(z_k-z_{k+1})^T \nabla^2 h(u_k) (z_k-z_{k+1})}{(z_k-z_{k+1})^T \nabla^2 h(v_k) (z_k-z_{k+1})}
\leq \eigmax\left(\nabla^2 h(v_k)^{-1/2} \nabla^2 h(u_k) \nabla^2 h(v_k)^{-1/2}\right)
= \max_{i\in\{1,\ldots,n\}} \frac{v_k^{(i)}}{u_k^{(i)}},
\end{equation}
where $u_k\in\bigl[(1-\theta_k)x_k+\theta_k z_k, (1-\theta_k)x_k+\theta_k z_{k+1}\bigr]$ and $v_k\in\bigl[z_k, z_{k+1}\bigr]$.
Suppose the sequence $\{x_k\}$ converges to the optimal solution $x_\star$ and
$\theta_k\to 0$, then have $u_k\to x_\star$.
If $x_\star$ is an interior point of the positive orthant or the simplex, meaning $x_\star^{(i)}>0$ for all coordinates~$i$, 
Then we see from~\eqref{eqn:gain-KL-bd} that the bound on 
$G_{\theta_k}(x_k, z_k, z_{k+1})$
depends on how close $x_\star$ is close to the boundary
(assuming $v_k$ is bounded). 
%In particular, Theorem~3 only require the triangle scaling property on the points $(x_k, z_k, z_{k+1})$ generated by the algorithm.

The most interesting case is when the optimal solution $x_\star$ is on the boundary, i.e., when $x_\star^{(i)}=0$ for some coordinates $i$. 
In fact, in our numerical examples on the D-optimal design problem and Poisson linear inverse problem, most of the solutions are on the boundary.
However, we emphasize that the iterates generated by the ABPG algorithm is never on the boundary, but may only converge to the boundary; see Assumption~A, especially A.5.
Our analysis applies to this case as well. 
%By the update rule $x_{k+1}=(1-\theta_k) + \theta_k z_{k+1}$, we know that each $x_k$ is a convex combination of the points $\{z_0=x_0, z_1, \ldots, z_k\}$.
%Therefore for each coordinate~$i$, 
%$x_k^{(i)}\geq \min\{z_0^{(i)},z_1^{(i)},\ldots,z_k^{(i)}\}$. 
%Notice that in the case of D-optimal design or Poisson linear inverse problem, they are all positive numbers, i.e., 
%$$x_k^{(i)}\geq \min\{z_0^{(i)},z_1^{(i)},\ldots,z_k^{(i)}\} ~>~ 0, \qquad\forall i, k.$$
%If $z_k^{(i)}$ converges but the limit is not zero, then there must be an $\epsilon>0$ such that $z_k^{(i)}\geq\epsilon$ for all $k\geq 0$. Then $x_k^{(i)}\geq \epsilon$ for all $k\geq 0$ regardless of $\theta_k\in[0,1]$.
In particular, we can show that if both sequences $\{x_k\}$ and $\{z_k\}$ converge, then $x_k^{(i)}\to 0$ implies $z_k^{(i)}\to 0$ and the convergence rate of 
$x_k^{(i)}$ is no faster than that of $z_k^{(i)}$.
(Here $x_k^{(i)}\to 0$ means $\lim_{k\to\infty}x_k^{(i)}=0$.)
More precisely, we have the following lemma.

\begin{lemma} \label{lem:xk-zk-converge}
Suppose an algorithm generates two sequences $\{x_k\}$ and $\{z_k\}$ in the strictly positive orthant, satisfying $x_0=z_0$ and $x_{k+1}=(1-\theta_k)x_k + \theta_k z_{k+1}$ for all $k\geq 0$. Then
\begin{itemize}
\item[(a)] 
If $\{x_k\}$ converges and $x_k^{(i)}\to 0$ for some coordinate~$i$,
then there must exists an subsequence of $\{z_{k}^{(i)}\}$ that converges to $0$. 
\item[(b)] Suppose both sequences $\{x_k\}$ and $\{z_k\}$ converge.
If $x_k^{(i)}\to 0$ for some~$i$, then it converges at a rate that is no faster than $z_k^{(i)}$ in the following sense:
For any monotone decreasing sequence $\{r_k\}$ that converges to~$0$ and satisfies $z_k^{(i)}\geq r_k$ for all $k\geq 0$, we have $x_k^{(i)}\geq r_k$ for all $k\geq 0$.
In particular, we can choose $r_k$ to be the monotone lower envelop of $z_k^{(i)}$, i.e., $r_k=\min\{z_0^{(i)}, z_1^{(i)},\ldots, z_k^{(i)}\}$.
\end{itemize}
\end{lemma}

\begin{proof}

By the update rule $x_{k+1}=(1-\theta_k) + \theta_k z_{k+1}$, we know that each $x_k$ is a convex combination of the points $\{z_0=x_0, z_1, \ldots, z_k\}$, which all lie in the strictly positive orthant. Therefore,
\[
x_k^{(i)}\geq \min\{z_0^{(i)},z_1^{(i)},\ldots,z_k^{(i)}\} ~>~ 0, \qquad\forall i, k.
\vspace{-1ex}
\]

\emph{Part~(a).}
Suppose $x_k^{(i)}\to 0$ but there is no subsequence of $\{z_k^{(i)}\}$ converging to~$0$. 
Then there must exist an~$\epsilon>0$ such that $z_k^{(i)}>\epsilon$ for all $k\geq 0$.
Since $x_k$ is a convex combination of $\{z_0, z_1, \ldots, z_k\}$, this implies 
$$x_k^{(i)}\geq \min\{z_0^{(i)},z_1^{(i)},\ldots,z_k^{(i)}\}>\epsilon, \qquad \forall\, k\geq 0,$$
which contradicts with the assumption that $x_k^{(i)}\to 0$.
Therefore, there must exists an subsequence of $\{z_{k}^{(i)}\}$ that converges to $0$. 

\emph{Part~(b).}
Suppose $\{r_k\}$ is monotone decreasing and converges to~$0$.
If $z_k^{(i)}\geq r_k$ for all $k\geq 0$, then
\[
x_k^{(i)}\geq \min\{z_0^{(i)},z_1^{(i)},\ldots,z_k^{(i)}\}
\geq \min\{r_0,r_1,\ldots,r_k\} = r_k, \qquad \forall\,k\geq 0.
\]
In this sense, $x_k^{(i)}\to 0$ at a rate that is no faster than $z_k^{(i)}$.
\end{proof}

\smallskip

According to the bound in~\eqref{eqn:gain-KL-bd} and Lemma~\ref{lem:xk-zk-converge}, if both sequences $\{x_k\}$ and $\{z_k\}$ converge, then
\[
G_{\theta_k}(x_k, z_k, z_{k+1}) 
\leq \max_{i\in\{1,\ldots,n\}} \frac{v_k^{(i)}}{u_k^{(i)}}
\approx \max_{i\in\{1,\ldots,n\}} \frac{z_k^{(i)}}{x_k^{(i)}},
\] 
which can be bounded by a constant, since $x_k^{(i)}\to 0$ at a rate that is no faster than $z_k^{(i)}$.
In this case, we have $G_k$ in Algorithm~\ref{alg:ABPG-g} bounded by a constant asymptotically, thus the convergence rate is $O(k^2)$. 

For the IS divergence, $h(x)= \sum_{i=1}^n -\log x^{(i)}$ and 
$\nabla^2 h(x) = \mathrm{diag}\bigl((\frac{1}{x^{(1)}})^2, \ldots, (\frac{1}{x^{(n)}})^2\bigr)$, thus the situation is very similar to the KL divergence.

However, we are not able to prove the convergence of the sequences 
$\{x_k\}$ and $\{z_k\}$ without additional assumptions 
(such as relative strong convexity). 
Indeed, to the best of our knowledge, convergence of these sequences have not been established even under the classical uniform Lipschitz condition.
Therefore, an a priori theoretical guarantee of the $O(k^{-2})$ rate
seems to be out of reach in general, which seems to coroborate the recent 
result in \cite{Dragomir2019} that the $O(k^{-1})$ rate cannot be improved in 
in general for the class of relatively smooth functions.

Nevertheless, we would like to reiterate the remarks at the end of 
Sections~\ref{sec:relative-smooth}. In particular, the class of relatively 
smooth functions is very large, and the lower bound in \cite{Dragomir2019}
is established with a worst-case function with pathological nonsmooth behavior.
In practical applications, we always work with one particular reference function which may possess structural properties that allow fast convergence.
In Algorithm~\ref{alg:ABPG-g}, the sequence $\{G_k\}$ is readily available 
as part of the computation and we can easily check the magnitude of 
$\overline{G}_k$.
Whenever it is small, we obtain a numerical certificate that the algorithm
did converge with the $O(k^{-2})$ rate.
This is exactly what we observe in the numerical experiments in
Section~\ref{sec:experiments}.

\section{Accelerated Bregman dual averaging method}
\label{sec:ABDA}

In this section, we present an accelerated Bregman dual averaging (ABDA) method
under the relative smoothness assumption.
This method extends Nesterov's accelerated dual averaging method
(\cite{Nesterov05smooth} and \cite[Algorithm~3]{Tseng08}) 
to the relatively smooth setting.
Here we focus on a simple variant in Algorithm~\ref{alg:ABDA} based on the 
uniform triangle-scaling property, although it is also possible to develop 
more sophisticated variants with automatic exponent or gain adaptation.

In Algorithm~\ref{alg:ABDA}, Line~\ref{lnl:psi-k-recursion} defines a sequence 
of functions $\{\psi_k\}_{k\in\nats}$ starting with $\psi_0\equiv 0$:
\begin{equation}\label{eqn:psi-k-recursion}
\psi_{k+1}(x) := \psi_k(x) + \theta_k^{1-\gamma} \ell(x|y_k).
\end{equation}
In other words, $\psi_{k+1}$ is a weighted sum of the lower approximations 
in~\eqref{eqn:sanwitch} constructed at $y_0,\ldots,y_k$:
\begin{equation}\label{eqn:psi-k-sum}
    \psi_{k+1}(x) = \sum_{t=0}^k \theta_t^{1-\gamma} \ell(x|y_t) .
\end{equation}
Line~\ref{lnl:ABDA-zk} in Algorithm~\ref{alg:ABDA} can be written as
\begin{equation}\label{eqn:ABDA-zk-simplified}
z_{k+1} = \argmin_{z\in C}\, \bigl\{
    \left\langle g_k, z\right\rangle + \vartheta_k \Psi(z) + L h(z) \bigr\}
\end{equation}
where 
\[
    g_k = \sum_{t=1}^k\theta_t^{1-\gamma}\nabla f(y_t), \qquad
    \vartheta_k = \sum_{t=1}^k\theta_t^{1-\gamma}. 
\]
When implementing Algorithm~\ref{alg:ABDA}, we only need to keep track 
of $g_k$ and $\vartheta_k$, and there is no need to maintain the abstract
form of $\psi_k(x)$.
Here our assumption of $C$ and $\Psi$ being simple means that the minimization
problem in~\eqref{eqn:ABDA-zk-simplified} can be solved efficiently.
This requirement is equivalent to that for the BPG method~\eqref{eqn:BPG}
and all variants of the ABPG methods in this paper.

\begin{algorithm}[t]
\caption{Accelerated Bregman dual averaging (ABDA) method}
\label{alg:ABDA}
\linespread{1.2}\selectfont
\DontPrintSemicolon
\textbf{input:} initial point $z_0\in\rint C$ and $\gamma>1$. \;
initialize: $x_0=z_0$, $\psi_0(x)\equiv 0$, and $\theta_0=1$. \;
\For{$k=0,1,2,\dots $}{
\nl $y_k := (1-\theta_k) x_k + \theta_k z_k$  \;
\nl\label{lnl:psi-k-recursion}%
$\psi_{k+1}(x) := \psi_k(x) + \theta_k^{1-\gamma} \ell(x|y_k)$ \;
\nl\label{lnl:ABDA-zk}%
$z_{k+1} := \argmin_{z \in C} \bigl\{ \psi_{k+1}(z) + L h(z)\bigr\}$ \;
\nl $x_{k+1} := (1-\theta_k) x_k + \theta_k z_{k+1}$ \;
\nl\label{lnl:ABDA-theta}%
find $\theta_{k+1}\in(0,1]$ such that
$\frac{1-\theta_{k+1}}{\theta_{k+1}^\gamma} = \frac{1}{\theta_k^\gamma}$\;
}
\end{algorithm}

Algorithm~\ref{alg:ABDA} (line~\ref{lnl:ABDA-theta}) requires the sequence 
$\{\theta_k\}_{k\in\nats}$ satisfy
\begin{equation}\label{eqn:theta-equality}
    \frac{1-\theta_{k+1}}{\theta_{k+1}^\gamma} = \frac{1}{\theta_k^\gamma},
    \qquad \forall\, k\geq 0.
\end{equation}
Under this condition, we can show 
\begin{equation}\label{eqn:theta-sum}
\vartheta_k = 
\sum_{i=0}^k \theta_i^{1-\gamma} = \frac{1}{\theta_k^\gamma}.
\end{equation} 
To see this, we use induction. Clearly it holds for $k=0$ 
if we choose $\theta_0=1$.
Suppose it holds for some $k\geq 0$,
then in light of~\eqref{eqn:theta-sum} and~\eqref{eqn:theta-equality},
\[
\vartheta_{k+1} = \sum_{i=0}^{k+1}\frac{1}{\theta_i^{\gamma-1}} 
= \frac{1}{\theta_k^\gamma} + \frac{1}{\theta_{k+1}^{\gamma-1}}
= \frac{1-\theta_{k+1}}{\theta_{k+1}^\gamma}+\frac{1}{\theta_{k+1}^{\gamma-1}}
= \frac{1-\theta_{k+1}+\theta_{k+1}}{\theta_{k+1}^\gamma}
= \frac{1}{\theta_{k+1}^\gamma}.
\]
Therefore the inequality~\eqref{eqn:theta-sum} holds for all $k\geq 0$.

To analyze the convergence of Algorithm~\ref{alg:ABDA}, we need the following
simple variant of Lemma~\ref{lem:basic-lemma-D}.
\begin{lemma}\label{lem:basic-lemma-h}
Suppose~$h$ is convex and differentiable on $\rint C$. 
For any closed convex function $\varphi$, if
\[
    z = \argmin_{x\in C} \, \bigl\{ \varphi(x) + h(x) \bigr\}
\]
and $h$ is differentiable at~$z$, then
\[
    \varphi(x) + h(x) \geq \varphi(z) + h(z) + D_h(x,z), 
    \quad \forall\, x\in \dom h.
\]
\end{lemma}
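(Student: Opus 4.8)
\textbf{Proof proposal for Lemma~\ref{lem:basic-lemma-h}.}
The plan is to prove it by the same three-point computation used for Lemma~\ref{lem:basic-lemma-D}, with the term $h(\cdot)$ now playing the role that $D_h(\cdot,z)$ plays there; I would carry it out via a direct variational perturbation so as to use differentiability of $h$ only at the point $z$ itself, which is exactly the hypothesis.

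Fix $x\in C\cap\dom h$ with $\varphi(x)<\infty$ (otherwise the inequality is trivial). For $t\in(0,1]$ set $z_t:=(1-t)z+tx\in C$. Optimality of $z$ over $C$ gives $\varphi(z_t)+h(z_t)\geq\varphi(z)+h(z)$, while convexity of $\varphi$ gives $\varphi(z_t)\leq(1-t)\varphi(z)+t\varphi(x)$. Combining them,
\[
t\bigl(\varphi(x)-\varphi(z)\bigr)~\geq~\varphi(z_t)-\varphi(z)~\geq~h(z)-h(z_t).
\]
Dividing by $t>0$ and letting $t\to 0^{+}$, differentiability of $h$ at $z$ turns the right-hand side into $-\langle\nabla h(z),x-z\rangle$, so $\varphi(x)\geq\varphi(z)-\langle\nabla h(z),x-z\rangle$. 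Adding $h(x)$ to both sides and substituting the Bregman identity $h(x)=h(z)+\langle\nabla h(z),x-z\rangle+D_h(x,z)$, the inner-product terms cancel and I arrive at $\varphi(x)+h(x)\geq\varphi(z)+h(z)+D_h(x,z)$, as claimed. (Alternatively one can write the first-order optimality condition $0\in\partial\varphi(z)+\nabla h(z)+N_C(z)$ and combine the subgradient inequality for $\varphi$, the Bregman identity, and the normal-cone inequality; the perturbation route sidesteps invoking a sum rule or a constraint qualification.)

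This lemma is elementary, so there is no real obstacle; the one point worth a line is the range of $x$. The perturbation yields the inequality for $x\in C\cap\dom h$, whereas the statement asserts it for all $x\in\dom h$ --- reconciled, exactly as in Lemma~\ref{lem:basic-lemma-D}, by the fact that the functions $\varphi$ to which the lemma is applied (here $\psi_{k+1}$, a weighted sum of the linearizations $\ell(\cdot\,|\,y_t)$) carry the constraint $x\in C$, so the left-hand side is $+\infty$ off $C$ and the claim is vacuous there; and in any case the comparison points used in the convergence analysis lie in $C$. The remaining hypotheses --- $z\in\dom h$ and $h$ differentiable at $z$ --- are supplied by well-posedness of the update that defines $z$.
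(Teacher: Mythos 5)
Your proof is correct. Note that the paper offers no proof of this lemma at all: it is stated as a ``simple variant'' of Lemma~\ref{lem:basic-lemma-D}, which is itself quoted from Chen and Teboulle without proof, so there is no in-paper argument to match. The route implicit in that reference is the first-order optimality condition $0\in\partial\bigl(\varphi+h+\iota_C\bigr)(z)$ combined with the subgradient inequality for $\varphi$ and the identity $h(x)-h(z)-\langle\nabla h(z),x-z\rangle=D_h(x,z)$; your perturbation along $z_t=(1-t)z+tx$ reaches the same conclusion while using differentiability of $h$ only at the single point $z$ (exactly the stated hypothesis) and without invoking a subdifferential sum rule or constraint qualification, which is a genuine, if small, gain in self-containedness. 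Two points you handle correctly and that are worth keeping: $z_t\in C\cap\dom h$ for $t\in[0,1]$ by convexity, so both the optimality inequality and the finiteness needed to rearrange it are available; and the discrepancy between ``for all $x\in\dom h$'' in the statement and the restriction $x\in C$ needed by the argument is the same gap present in the paper's own statement of Lemma~\ref{lem:basic-lemma-D}, resolved by absorbing the indicator of $C$ into $\varphi$ (or by noting that every application in Lemma~\ref{lem:ABDA-onestep} and Theorem~\ref{thm:ABDA-rate} takes comparison points in $C$).
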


\begin{lemma}\label{lem:ABDA-onestep}
Suppose Assumption~\ref{asmp:all} holds, 
$f$ is $L$-smooth relative to~$h$ on~$C$,
and $\gamma$ is a uniform TSE of~$D_h$.
Then the sequences generated by Algorithm~\ref{alg:ABDA} satisfy,
for all $x\in\dom h$ and all $k\geq1$, 
\begin{equation}\label{eqn:ABDA-onestep}
\frac{1-\theta_{k+1}}{\theta_{k+1}^\gamma} F(x_{k+1})
-\psi_{k+1}(z_{k+1}) - L h(z_{k+1}) 
~\leq~ \frac{1-\theta_k}{\theta_k^\gamma}  F(x_k) 
- \psi_k(z_k) - L h(z_k).
\end{equation}
\end{lemma}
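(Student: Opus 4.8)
The plan is to mirror the one-step analysis of the ABPG method (Lemma~\ref{lem:ABPG-onestep}), with the single moving Bregman anchor replaced by the aggregated lower models~$\psi_k$. First I would start from the upper half of the sandwich inequality~\eqref{eqn:sanwitch}, $F(x_{k+1})\leq\ell(x_{k+1}|y_k)+LD_h(x_{k+1},y_k)$, then use $x_{k+1}=(1-\theta_k)x_k+\theta_k z_{k+1}$ and $y_k=(1-\theta_k)x_k+\theta_k z_k$ together with the uniform triangle-scaling property~\eqref{eqn:triangle-scaling} to bound $D_h(x_{k+1},y_k)\leq\theta_k^\gamma D_h(z_{k+1},z_k)$, and the convexity of $\ell(\cdot|y_k)$ to split $\ell(x_{k+1}|y_k)\leq(1-\theta_k)\ell(x_k|y_k)+\theta_k\ell(z_{k+1}|y_k)$. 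Combining these gives
\[
F(x_{k+1})\leq(1-\theta_k)\ell(x_k|y_k)+\theta_k\ell(z_{k+1}|y_k)+\theta_k^\gamma LD_h(z_{k+1},z_k).
\]

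The manoeuvre specific to the dual-averaging setting is, first, to rewrite $\theta_k\ell(z_{k+1}|y_k)=\theta_k^\gamma\,\theta_k^{1-\gamma}\ell(z_{k+1}|y_k)=\theta_k^\gamma\bigl(\psi_{k+1}(z_{k+1})-\psi_k(z_{k+1})\bigr)$, using the recursion~\eqref{eqn:psi-k-recursion} that defines $\psi_{k+1}$; and, second, to absorb the term $\theta_k^\gamma LD_h(z_{k+1},z_k)$ by applying Lemma~\ref{lem:basic-lemma-h} \emph{to the previous step}: since $z_k=\argmin_{z\in C}\{\psi_k(z)+Lh(z)\}$ (this is why the restriction $k\geq1$ is needed, so that $z_k$ is produced by line~\ref{lnl:ABDA-zk}; it also uses that $\psi_k$ is closed and convex, being a nonnegative combination of the closed convex functions $\ell(\cdot|y_t)$, and that $h$ is differentiable at $z_k\in\rint C$), the lemma with target point $z_{k+1}$ gives $LD_h(z_{k+1},z_k)\leq\psi_k(z_{k+1})+Lh(z_{k+1})-\psi_k(z_k)-Lh(z_k)$. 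Substituting both relations, the $\pm\theta_k^\gamma\psi_k(z_{k+1})$ terms cancel, leaving
\[
F(x_{k+1})\leq(1-\theta_k)\ell(x_k|y_k)+\theta_k^\gamma\bigl(\psi_{k+1}(z_{k+1})+Lh(z_{k+1})\bigr)-\theta_k^\gamma\bigl(\psi_k(z_k)+Lh(z_k)\bigr).
\]

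To finish, I would invoke the lower half of~\eqref{eqn:sanwitch}, $\ell(x_k|y_k)\leq F(x_k)$, together with $1-\theta_k\geq0$, then divide through by $\theta_k^\gamma>0$ and rearrange to obtain
\[
\frac{1}{\theta_k^\gamma}F(x_{k+1})-\psi_{k+1}(z_{k+1})-Lh(z_{k+1})\leq\frac{1-\theta_k}{\theta_k^\gamma}F(x_k)-\psi_k(z_k)-Lh(z_k),
\]
and the claim~\eqref{eqn:ABDA-onestep} then follows by substituting the defining relation~\eqref{eqn:theta-equality}, i.e.\ $(1-\theta_{k+1})/\theta_{k+1}^\gamma=1/\theta_k^\gamma$, into the left-hand side. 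I do not expect a genuine obstacle: once the two substitutions are spotted, the rest is bookkeeping. The point that deserves care is \emph{which} minimization the basic lemma is applied to --- in ABPG the anchor $z_k$ moves and Lemma~\ref{lem:basic-lemma-D} is applied to the \emph{current} step, whereas here the aggregation is global and it is the \emph{previous} optimality of $z_k$ (hence $k\geq1$) that lets us telescope the $D_h(z_{k+1},z_k)$ term; matching the $\theta_k^{1-\gamma}$ weights exactly against the $\psi_k$ recursion is the other place to double-check.
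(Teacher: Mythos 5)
Your proposal is correct and follows essentially the same route as the paper's proof: the same starting inequality from the triangle-scaling property and convexity of $\ell(\cdot|y_k)$, the same application of Lemma~\ref{lem:basic-lemma-h} to the optimality of $z_k$ for $\psi_k+Lh$ (hence $k\geq 1$), the same use of the recursion~\eqref{eqn:psi-k-recursion}, and the same final substitution of~\eqref{eqn:theta-equality}. The only difference is the (immaterial) order in which the bound $\ell(x_k|y_k)\leq F(x_k)$ is invoked.
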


\begin{proof}
We can start with the inequality~\eqref{eqn:dual-avg-start}:
\begin{eqnarray}
 F(x_{k+1})
&\leq& (1-\theta_k)\ell(x_k|y_k) + \theta_k \ell(z_{k+1}|y_k) 
    + \theta_k^\gamma L D_h(z_{k+1}, z_k) \nonumber \\
&=& (1-\theta_k)\ell(x_k|y_k) + \theta_k^{\gamma} \left( \theta_k^{1-\gamma} 
    \ell(z_{k+1}|y_k) + L D_h(z_{k+1}, z_k) \right) \nonumber \\
&\leq& (1-\theta_k) F(x_k) + \theta_k^{\gamma} \left( \theta_k^{1-\gamma} 
    \ell(z_{k+1}|y_k) + L D_h(z_{k+1}, z_k) \right) .
\label{eqn:dual-avg-dzz}
\end{eqnarray}
Notice that for $k\geq 1$, $z_k$ is the minimizer of $\psi_k(z)+L h(z)$ 
over~$C$. We use Lemma~\ref{lem:basic-lemma-h} to obtain
\[
\psi_k(z_k) + L h(z_k) + L D_h(z_{k+1}, z_k) \leq \psi_k(z_{k+1})+L h(z_{k+1}),
\]
which gives 
\begin{equation}\label{eqn:bound-div-zz}
L D_h(z_{k+1}, z_k) \leq \psi_k(z_{k+1}) + L h(z_{k+1})-\psi_k(z_k)-L h(z_k).
\end{equation}
Combining the inequalities~\eqref{eqn:dual-avg-dzz} 
and~\eqref{eqn:bound-div-zz}, we obtain
\begin{eqnarray*}
 F(x_{k+1})
&\leq& (1-\theta_k) F(x_k) + \theta_k^{\gamma} \left( 
    \theta_k^{1-\gamma} \ell(z_{k+1}|y_k) 
    + \psi_k(z_{k+1}) + L h(z_{k+1}) - \psi_k(z_k) - L h(z_k) \right) \\
&=& (1-\theta_k) F(x_k) + \theta_k^{\gamma} \bigl( 
    \psi_{k+1}(z_{k+1}) + L h(z_{k+1}) - \psi_k(z_k) - L h(z_k) \bigr),
\end{eqnarray*}
where in the last equality we used recursive definition of $\psi_{k+1}$
in~\eqref{eqn:psi-k-recursion}.
Dividing both sides of the above inequality by $\theta_k^\gamma$, we have
\[
\frac{1}{\theta_k^\gamma} F(x_{k+1})
~\leq~ \frac{1-\theta_k}{\theta_k^\gamma}  F(x_k) 
+\psi_{k+1}(z_{k+1}) + L h(z_{k+1}) - \psi_k(z_k) - L h(z_k).
\]
Using~\eqref{eqn:theta-equality} and rearranging terms gives the desired 
result~\eqref{eqn:ABDA-onestep}, which holds for $k\geq 1$.
\end{proof}

\begin{theorem}\label{thm:ABDA-rate}
Suppose Assumption~\ref{asmp:all} holds, 
$f$ is $L$-smooth relative to~$h$ on~$C$,
and $\gamma$ is a uniform TSE of~$D_h$.
The sequences generated by Algorithm~\ref{alg:ABDA} satisfy:
\begin{itemize}
\item[(a)] If $z_0=\argmin_{z\in C} h(z)$, then for any $x\in\dom h$, 
\begin{equation}\label{eqn:ABDA-rate-z0-min}
 F(x_{k+1}) -  F(x) \leq \left(\frac{\gamma}{k+\gamma}\right)^\gamma 
L \bigl(h(x) - h(z_0)\bigr),
\qquad \forall\, k\,\geq 0;
\end{equation}
\item[(b)] Otherwise, for any $x\in\dom h$,
%such that $ F(x)< F(z_1)$, 
\begin{equation}\label{eqn:ABDA-rate-no-min}
 F(x_{k+1}) -  F(x) \leq \left(\frac{\gamma}{k+\gamma}\right)^\gamma 
L \bigl(h(x) - h(z_1) + D_h(z_1,z_0)\bigr),
\qquad \forall\, k\,\geq 0.
\end{equation}
\end{itemize}
\end{theorem}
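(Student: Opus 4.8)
The plan is to derive the bound from Lemma~\ref{lem:ABDA-onestep} by telescoping the one-step inequality~\eqref{eqn:ABDA-onestep} from step~$1$ up to step~$k$, which yields, for any $x\in\dom h$,
\[
\frac{1-\theta_{k+1}}{\theta_{k+1}^\gamma} F(x_{k+1})
-\psi_{k+1}(z_{k+1}) - L h(z_{k+1})
~\leq~ \frac{1-\theta_1}{\theta_1^\gamma} F(x_1)
- \psi_1(z_1) - L h(z_1).
\]
The left-hand side must then be lower-bounded in terms of $F(x)$. Using the definition $\psi_{k+1}(z)=\sum_{t=0}^k\theta_t^{1-\gamma}\ell(z|y_t)$, the lower bound $\ell(z|y_t)\le F(z)$ from~\eqref{eqn:sanwitch}, and the identity $\vartheta_k=\sum_{t=0}^k\theta_t^{1-\gamma}=1/\theta_k^\gamma$ from~\eqref{eqn:theta-sum}, we get $\psi_{k+1}(x)\le \frac{1}{\theta_{k+1}^\gamma}F(x)$; combined with $z_{k+1}$ being the minimizer of $\psi_{k+1}(z)+Lh(z)$, we obtain $\psi_{k+1}(z_{k+1})+Lh(z_{k+1})\le \psi_{k+1}(x)+Lh(x)\le \frac{1}{\theta_{k+1}^\gamma}F(x)+Lh(x)$. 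Plugging this in and using $\frac{1-\theta_{k+1}}{\theta_{k+1}^\gamma}=\frac{1}{\theta_{k+1}^\gamma}-1/\theta_k^\gamma$-style bookkeeping turns the left side into $\frac{1}{\theta_{k+1}^\gamma}\bigl(F(x_{k+1})-F(x)\bigr)-Lh(x)$ up to the telescoped remainder, and then applying $\theta_{k+1}\le\gamma/(k+1+\gamma)$ (which follows from Lemma~\ref{lem:theta-bound} since $\theta_k$ here solves the equality version of~\eqref{eqn:theta-inequality}) delivers the $(\gamma/(k+\gamma))^\gamma$ prefactor.

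The two cases (a) and (b) differ only in how we handle the ``boundary'' terms from the first step, i.e. the right-hand side $\frac{1-\theta_1}{\theta_1^\gamma}F(x_1)-\psi_1(z_1)-Lh(z_1)$ together with the $+Lh(x)$ we pick up on the left. In case~(b) I would simply compute: $\psi_1(z)=\theta_0^{1-\gamma}\ell(z|y_0)=\ell(z|y_0)$ since $\theta_0=1$, and $\theta_1$ satisfies $\frac{1-\theta_1}{\theta_1^\gamma}=1$, so $\frac{1-\theta_1}{\theta_1^\gamma}F(x_1)=F(x_1)$. Since $z_1=\argmin_{z}\{\ell(z|y_0)+Lh(z)\}$, Lemma~\ref{lem:basic-lemma-h} gives $\ell(x|y_0)+Lh(x)\ge \ell(z_1|y_0)+Lh(z_1)+LD_h(x,z_1)$; noting $x_1=z_1$ (since $\theta_0=1$ forces $y_0=z_0$ and $x_1=z_1$) and $F(x_1)=F(z_1)\ge \ell(z_1|y_0)$, one assembles $F(x_1)-\psi_1(z_1)-Lh(z_1)+Lh(x)\le LD_h(x,z_1)+$(a controllable remainder). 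Carefully tracking the remaining $\ell(z_1|y_0)$ versus $F(z_1)$ gap and the term $Lh(z_1)$ gives the $L\bigl(h(x)-h(z_1)+D_h(z_1,z_0)\bigr)$ on the right after recognizing $D_h(z_1,z_0)=h(z_1)-h(z_0)-\langle\nabla h(z_0),z_1-z_0\rangle$ and that $z_0=z_1$'s relation drops out; I expect the cleanest route is to expand everything in terms of $h$ and $\ell(\cdot|y_0)$ and simplify. For case~(a), the extra hypothesis $z_0=\argmin_{z\in C}h(z)$ means $\langle\nabla h(z_0),z-z_0\rangle\ge 0$ for all $z\in C$, hence $D_h(z_1,z_0)\le h(z_1)-h(z_0)$, and substituting into~\eqref{eqn:ABDA-rate-no-min} collapses $-h(z_1)+D_h(z_1,z_0)\le -h(z_0)$, yielding~\eqref{eqn:ABDA-rate-z0-min}; so (a) follows from (b) by one inequality.

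The main obstacle is the bookkeeping at the first step: unlike Algorithm~\ref{alg:ABPG}, Lemma~\ref{lem:ABDA-onestep} is only valid for $k\ge 1$ (because it needs $z_k$ to be a genuine minimizer of $\psi_k+Lh$, which fails for $\psi_0\equiv 0$ unless $z_0$ is the unconstrained-over-$C$ minimizer of $h$), so the $k=0$ step has to be treated by hand and it is where the two different right-hand sides are born. I would handle $k=0$ separately using inequality~\eqref{eqn:dual-avg-dzz} directly (valid for all $k\ge 0$), together with $y_0=z_0=x_0$, $\theta_0=1$, and then either Lemma~\ref{lem:basic-lemma-h} applied to $z_1$ (general case) or the optimality of $z_0$ for $h$ (case a). Once the base case is pinned down, the telescoping and the final substitution of $\theta_k\le\gamma/(k+\gamma)$ are routine and parallel to Theorem~\ref{thm:ABPG-rate}. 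A secondary point requiring care is verifying $\vartheta_k=1/\theta_k^\gamma$ is used with the correct index shift ($\psi_{k+1}$ involves $\theta_0,\dots,\theta_k$, so the relevant sum equals $1/\theta_{k+1}^\gamma$ via~\eqref{eqn:theta-equality}), but the excerpt already establishes this identity so I can invoke it directly.
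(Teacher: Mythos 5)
Your overall architecture matches the paper's: telescope Lemma~\ref{lem:ABDA-onestep} (valid only for $k\ge 1$), treat $k=0$ by hand, bound $\psi_{k+1}(z_{k+1})+Lh(z_{k+1})$ by its value at $x$ using the minimizing property of $z_{k+1}$ together with $\ell(x|y_t)\le F(x)$ and the identity $\sum_{t=0}^k\theta_t^{1-\gamma}=1/\theta_k^\gamma$, then invoke Lemma~\ref{lem:theta-bound}. Two points differ or need repair. First, your derivation of (a) from (b) is a genuine and slightly slicker departure: the paper proves (a) directly by observing that $z_0=\argmin_{z\in C}\{\psi_0(z)+Lh(z)\}$ when $z_0$ minimizes $h$ over $C$, so the one-step lemma extends to $k=0$ and the telescoped right-hand side is simply $-Lh(z_0)$. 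Your route --- first-order optimality of $z_0$ gives $\langle\nabla h(z_0),z_1-z_0\rangle\ge 0$, hence $D_h(z_1,z_0)\le h(z_1)-h(z_0)$, which collapses the bound in (b) to that in (a) --- is valid because the proof of (b) never actually uses $z_0\ne\argmin_{z\in C}h(z)$; it saves a separate base case. Second, your handling of the $k=0$ step in case (b) is the one place the argument is not closed: the term $LD_h(z_1,z_0)$ does not come from Lemma~\ref{lem:basic-lemma-h} or from expanding the Bregman distance; it comes from applying the upper bound in~\eqref{eqn:sanwitch} (relative smoothness) at the pair $(z_1,z_0)$. Since $\theta_0=1$ forces $y_0=z_0$ and $x_1=z_1$, the boundary term is exactly $F(z_1)-\ell(z_1|z_0)-Lh(z_1)\le LD_h(z_1,z_0)-Lh(z_1)$; your ``controllable remainder'' is precisely this gap, so you have the pieces, but you should name this inequality rather than hope a general expansion simplifies. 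Finally, watch the index: $\sum_{t=0}^k\theta_t^{1-\gamma}=1/\theta_k^\gamma=(1-\theta_{k+1})/\theta_{k+1}^\gamma$, not $1/\theta_{k+1}^\gamma$, so both $F(x_{k+1})$ and $F(x)$ carry the coefficient $1/\theta_k^\gamma$ and the final prefactor is $\theta_k^\gamma\le(\gamma/(k+\gamma))^\gamma$, matching the statement.
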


\begin{proof}
If $z_0=\argmin_{z\in C} h(z)$, we use the definition $\psi_0\equiv 0$ 
to conclude that
\[
    z_0 = \argmin_{z\in C} \,\bigl\{\psi_0(z) + L h(z) \bigr\}.
\]
In this case, we can extend the result of Lemma~\ref{lem:ABDA-onestep} to 
hold for all $k\geq 0$.
Applying the inequality~\eqref{eqn:ABDA-onestep} for iterations $0,1,\ldots,k$,
we obtain
\[
\frac{1-\theta_{k+1}}{\theta_{k+1}^\gamma}  F(x_{k+1})
-\psi_{k+1}(z_{k+1}) - L h(z_{k+1}) 
~\leq~ \frac{1-\theta_0}{\theta_0^\gamma}  F(x_0) - \psi_0(z_0) - L h(z_0)
~=~ - L h(z_0),
\]
where we used $\theta_0=1$ and $\psi_0\equiv 0$.
Next using~\eqref{eqn:theta-equality} and rearranging terms, we have
\begin{eqnarray}
\frac{1}{\theta_k^\gamma}  F(x_{k+1})
&\leq& \psi_{k+1}(z_{k+1}) + L h(z_{k+1}) - L h(z_0) \nonumber \\
&\leq& \psi_{k+1}(x) + L h(x) - L h(x_0)
\label{eqn:ABDA-pre-k}\\
&=& \sum_{t=0}^k \theta_t^{1-\gamma}\ell(x|y_t) + L \bigl(h(x)-h(z_0)\bigr) 
\nonumber \\
&\leq & \sum_{t=0}^k \theta_t^{1-\gamma} F(x) + L \bigl(h(x)-h(z_0)\bigr) 
\nonumber \\
&=& \frac{1}{\theta_k^\gamma} F(x) + L \bigl(h(x)-h(z_0)\bigr) ,
\label{eqn:ABDA-k-steps}
\end{eqnarray}
where the second inequality used the fact that $z_{k+1}$ is the minimizer of
$\psi_{k+1}(z)+Lh(z)$, the third inequality used $\ell(x|y_t)\leq F(x)$,
and the last equality used~\eqref{eqn:theta-sum}.
Rearranging terms of~\eqref{eqn:ABDA-k-steps} yields
\[
 F(x_{k+1}) -  F(x) \leq \theta_k^\gamma L\bigl(h(x)-h(z_0)\bigr).
\]
According to Lemma~\ref{lem:theta-bound}, we have 
$\theta_k\leq\frac{\gamma}{k+\gamma}$ if~\eqref{eqn:theta-equality} holds,
which gives ~\eqref{eqn:ABDA-rate-z0-min}.

If $z_0\neq\argmin_{z\in C} h(z)$, then we can only
apply~\eqref{eqn:ABDA-onestep} for $k\geq 1$ to obtain
\begin{eqnarray*}
\frac{1}{\theta_k^\gamma}  F(x_{k+1}) - \psi_{k+1}(z_{k+1}) - L h(z_{k+1})
&\leq& \frac{1-\theta_1}{\theta_1^\gamma} F(x_1) - \psi_1(z_1)- L h(z_1) \\
&=& \frac{1}{\theta_0^\gamma} F(x_1) - \theta_0^{1-\gamma}\ell(z_1|y_0)
    - L h(z_1) \\
&=&  F(z_1) - \ell(z_1|z_0)- L h(z_1) \\
&\leq& L D_h(z_1, z_0) - L h(z_1),
\end{eqnarray*}
where the first equality used~\eqref{eqn:theta-equality}, the second 
equality used $\theta_0=1$, $y_0=z_0$ and $x_1=z_1$,
and the last inequality is due to relative smoothness:
$ F(z_1)\leq \ell(z_1|z_0) + L D_h(z_1, z_0)$.
Therefore, 
\begin{eqnarray*}
\frac{1}{\theta_k^\gamma}  F(x_{k+1})
&\leq& \psi_{k+1}(z_{k+1}) + L h(z_{k+1}) + L D_h(z_1, z_0) - L h(z_1) \\
&\leq& \frac{1}{\theta_k^\gamma} F(x) 
+ L \bigl(h(x) - h(z_1) + D_h(z_1, z_0) \bigr), 
\end{eqnarray*}
where the last inequality repeats the arguments from~\eqref{eqn:ABDA-pre-k}
to~\eqref{eqn:ABDA-k-steps}.
Rearranging terms leads to
\[
 F(x_{k+1}) -  F(x)
~\leq~ \theta_k^\gamma L \bigl(h(x) - h(z_1) + D_h(z_1, z_0) \bigr), 
\]
and further applying Lemma~\ref{lem:theta-bound} gives the desired 
result~\eqref{eqn:ABDA-rate-no-min}.
\end{proof}

As a sanity check, we show that the right-hand-side 
of~\eqref{eqn:ABDA-rate-no-min} is strictly positive for any $x\in\dom h$
such that $ F(x)< F(z_1)+L D_h(x, z_1)$.
We exploit the fact that $z_1=\argmin_{z\in C} \{ \ell(z|z_0) + L h(z)\}$. 
Using Lemma~\ref{lem:basic-lemma-h}, we have 
\[
    \ell(z_1|z_0) + L h(z_1) \leq \ell(x|z_0) + L h(x) - L D_h(x, z_1),
\]
which implies
\[
    L\bigl(h(x)-h(z_1)\bigr) \geq L D_h(x,z_1) + \ell(z_1|z_0)-\ell(x|z_0).
\]
Then we have
\begin{eqnarray*}
L\bigl(h(x)-h(z_1)+D_h(z_1,z_0)\bigr)
&\geq& L D_h(x,z_1) + \ell(z_1|z_0)-\ell(x|z_0) + L D_h(z_1, z_0)\\
&=& L D_h(x,z_1) +\bigl(\ell(z_1|z_0)+L D_h(z_1,z_0)\bigr) - \ell(x|z_0)\\
&\geq& L D_h(x,z_1) +  F(z_1) - \ell(x|z_0)\\
&\geq& L D_h(x,z_1) +  F(z_1) -  F(x),
\end{eqnarray*}
where the second inequality used the upper bound in~\eqref{eqn:sanwitch},
and the last inequality used the lower bound in~\eqref{eqn:sanwitch}. 
Therefore, for any $x$ such that $ F(x)< F(z_1)+L D_h(x,z_1)$, we have
\[
    L\bigl(h(x)-h(z_1)+D_h(z_1,z_0)\bigr) ~>~ L D_h(x,z_1) ~\geq~ 0.
\]
This completes the proof.

\section{Numerical experiments}
\label{sec:experiments}

We consider three applications of relatively smooth convex optimization:
D-optimal experiment design, Poisson linear inverse problem, 
and relative-entropy nonnegative regression.
For each application, we compare the algorithms developed in this paper
with the BPG method~\eqref{eqn:BPG} and demonstrate significant 
performance improvement. 
Our implementations and experiments are shared through an open-source
repository at \url{https://github.com/linxiaolx/accbpg}.

\subsection{D-optimal experiment design}
\label{sec:D-optimal}

\begin{figure}[t]
\begin{subfigure}[b]{0.5\linewidth}
  \centering
  \includegraphics[width=0.9\textwidth]{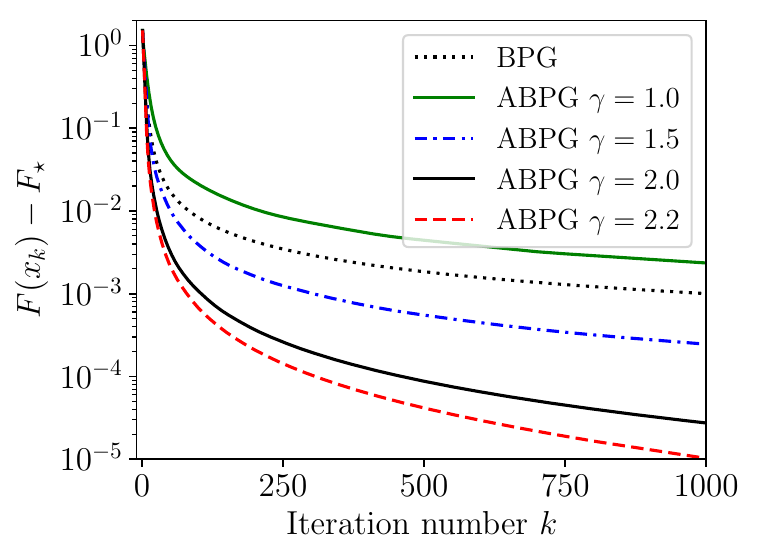}
  \caption{ABPG method with different TSE $\gamma$.}
%\vspace{1ex}
  \label{fig:D-opt-gamma-obj}
\end{subfigure}%
\begin{subfigure}[b]{0.5\linewidth}
  \centering
  \includegraphics[width=0.9\textwidth]{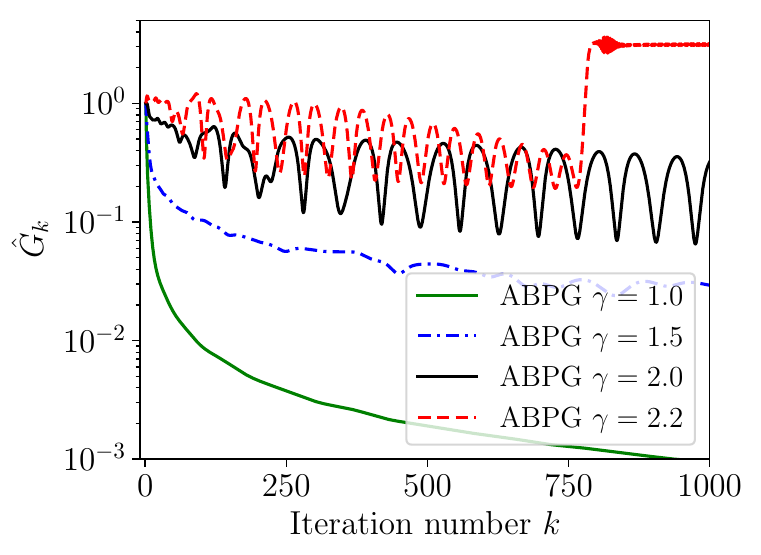}
  \caption{Local triangle scaling gain $\widehat{G}_k$.}
%\vspace{1ex}
  \label{fig:D-opt-gamma-gain}
\end{subfigure}
%\begin{subfigure}[b]{0.5\linewidth}
%  \centering
%  \includegraphics[width=0.9\textwidth]{figures/D_opt_m80n200adapt_semilog}
%  \caption{Adaptive ABPG with $\gamma=2$ (log-linear plot).}
%  \label{fig:D-opt-adapt-lin}
%\end{subfigure}%
%\begin{subfigure}[b]{0.5\linewidth}
%  \centering
%  \includegraphics[width=0.9\textwidth]{figures/D_opt_m80n200adapt_loglog}
%  \caption{Adaptive ABPG with $\gamma=2$ (log-log plot).}
%  \label{fig:D-opt-adapt-log}
%\end{subfigure}
\caption{D-optimal design: random problem instance with $m=80$ and $n=200$.}
\label{fig:D-opt-sublinear}
\end{figure}

Given $n$ vectors $v_1,\ldots,v_n\in\reals^m$ where $n\geq m+1$, 
the D-optimal design problem is
\begin{equation}\label{eqn:D-optimal}
\begin{array}{ll}
\minimize & f(x):=-\log\det\left(\sum_{i=1}^n x^{(i)} v_i v_i^T\right)\\[1ex]
\mbox{subject to} & \sum_{i=1}^n x^{(i)}  = 1 \\[1ex]
                  & x^{(i)} \geq 0, \quad i=1,\ldots,n.
\end{array}
\end{equation}
In the form of problem~\eqref{eqn:composite-min}, we have $\Psi\equiv 0$,
$ F(x)\equiv f(x)$, and $C$ is the standard simplex in~$\reals^n$.
In statistics, this problem corresponds to maximizing the determinant of the
Fisher information matrix (e.g., \cite{KieferWolfowitz59,Atwood69}). 
It is shown in \cite{LuFreundNesterov18} that~$f$ 
defined in~\eqref{eqn:D-optimal} is $1$-smooth relative to Burg's entropy
$h(x)=-\sum_{i=1}^n\log(x^{(i)})$ on $\reals^n_+$.
In this case, $D_h$ is the IS-distance defined in~\eqref{eqn:IS-distance}.

\subsubsection{Experiment on synthetic data}
In our first experiment, we set $m=80$ and $n=200$ and generated $n$ random
vectors in $\reals^m$, where the entries of the vectors were generated
following independent Gaussian distributions with zero mean and unit variance.
The results are shown in Figures~\ref{fig:D-opt-sublinear} and~\ref{fig:D-opt-sublinear-time}.

\begin{figure}[t]
\begin{subfigure}[b]{0.5\linewidth}
  \centering
  \includegraphics[width=0.9\textwidth]{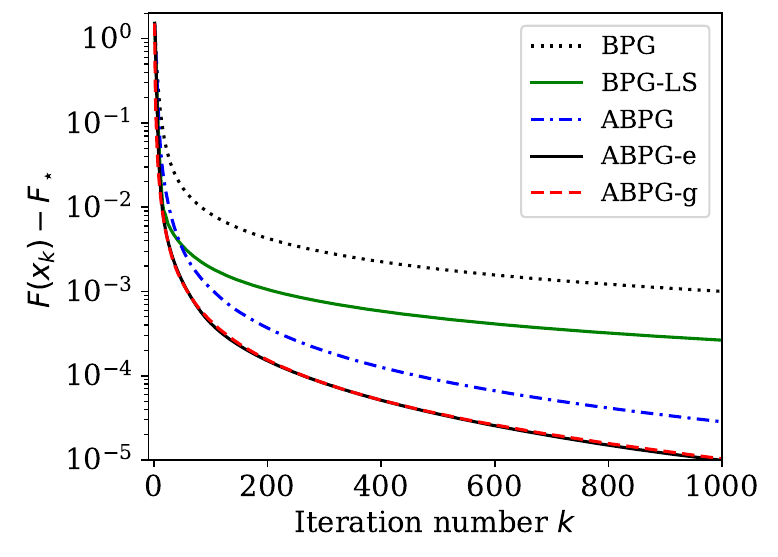}
  \caption{Objective gap versus iterations (linear scale).}
\vspace{2ex}
  \label{fig:D-opt-adapt-lin}
\end{subfigure}%
\begin{subfigure}[b]{0.5\linewidth}
  \centering
  \includegraphics[width=0.9\textwidth]{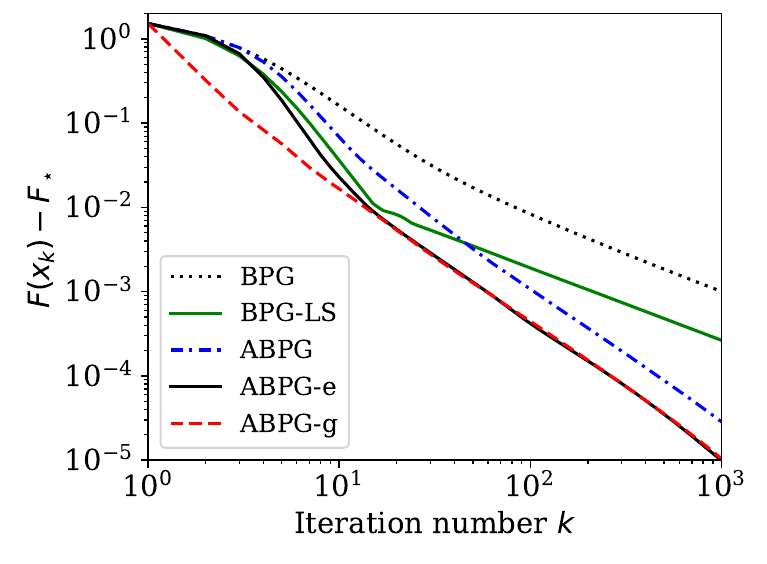}
  \caption{Objective gap versus iterations (log scale).}
\vspace{2ex}
  \label{fig:D-opt-adapt-log}
\end{subfigure}
\begin{subfigure}[b]{0.5\linewidth}
  \centering
  \includegraphics[width=0.9\textwidth]{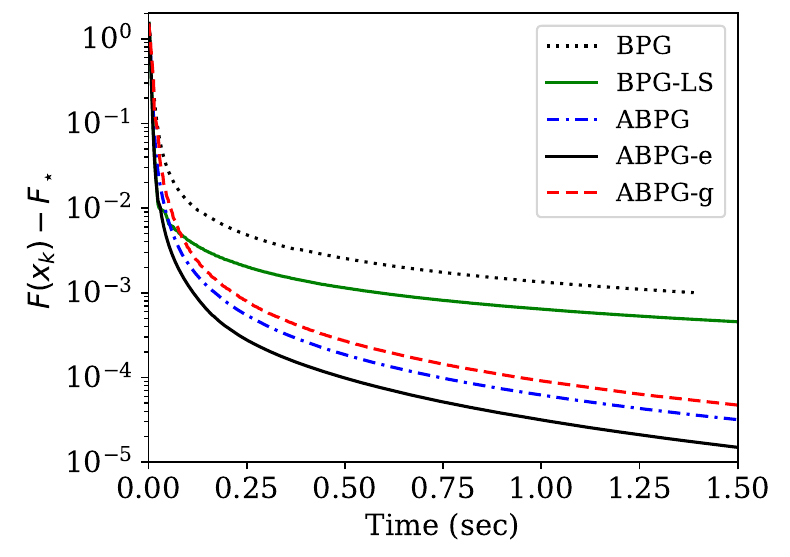}
  \caption{Objective gap versus CPU time (linear scale).}
  \label{fig:D-opt-adapt-lin-time}
\end{subfigure}%
\begin{subfigure}[b]{0.5\linewidth}
  \centering
  \includegraphics[width=0.9\textwidth]{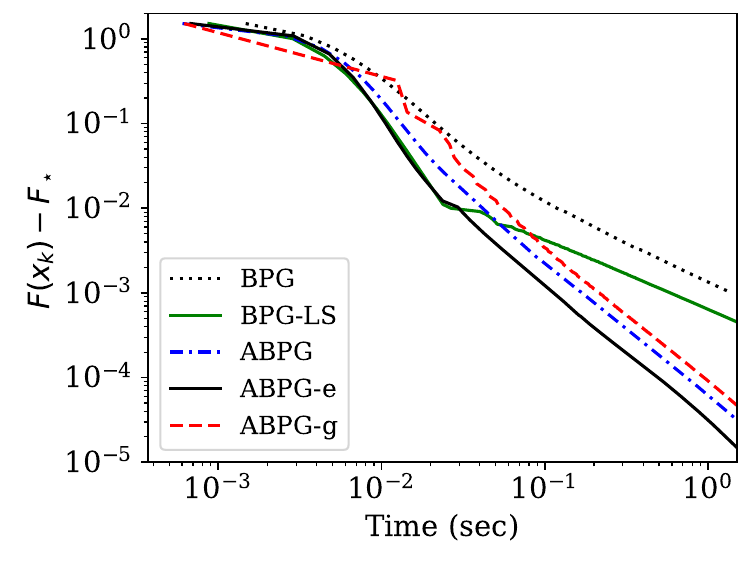}
  \caption{Objective gap versus CPU time (log scale).}
  \label{fig:D-opt-adapt-log-time}
\end{subfigure}
\caption{D-optimal design: random instance with $m=80$ and $n=200$; $\gamma=2$ for all ABPG variants.}
\label{fig:D-opt-sublinear-time}
\end{figure}

Figure~\ref{fig:D-opt-gamma-obj} shows the reduction of optimality gap by 
the BPG method~\eqref{eqn:BPG} and the ABPG method (Algorithm~\ref{alg:ABPG})
with four different values of~$\gamma$.
For $\gamma=1$, the ABPG method converges with $O(k^{-1})$ rate, 
but is slower than the BPG method.
When we increase~$\gamma$ to~$1.5$ and then~$2$, the ABPG method is 
significantly faster than BPG.
Interestingly, ABPG still converges with $\gamma=2.2$ 
(which is larger than the intrinsic TSE $\gammain=2$)
and is even faster than with $\gamma=2$.
To better understand this phenomenon, we plot the 
local triangle-scaling gain
\begin{equation}\label{eqn:effective-gain}
  \widehat{G}_k
=\frac{D_h(x_{k+1},y_k)}{\theta^\gamma D_h(z_{k+1},z_k)}
=\frac{D_h((1-\theta)x_k+\theta z_{k+1},(1-\theta)x_k+\theta z_k)}
  {\theta^\gamma D_h(z_{k+1},z_k)}.
\end{equation}
Figure~\ref{fig:D-opt-gamma-gain} shows that for $\gamma=1.0$ and $1.5$,
$\widehat{G}_k$ is mostly much smaller than~$1$.
For $\gamma=2$, $\widehat{G}_k$ is much closer to~$1$ but always less than~$1$.
This gives a numerical certificate that the ABPG method converged with 
$O(k^{-2})$ rate.
For $\gamma=2.2$, $\widehat{G}_k$ stayed close to~$1$ for the first 700 
iterations and then jumped to~$3$ and stayed around.
The method diverges with larger value of~$\gamma$.
We didn't plot the ABDA method (Algorithm~\ref{alg:ABDA}) because it overlaps
with ABPG for the same value of~$\gamma$ when the initial point is taken
as the center of the simplex, see part (a) of Theorem~\ref{thm:ABDA-rate}.

\begin{figure}[t]
  \centering
  \includegraphics[width=0.45\textwidth]{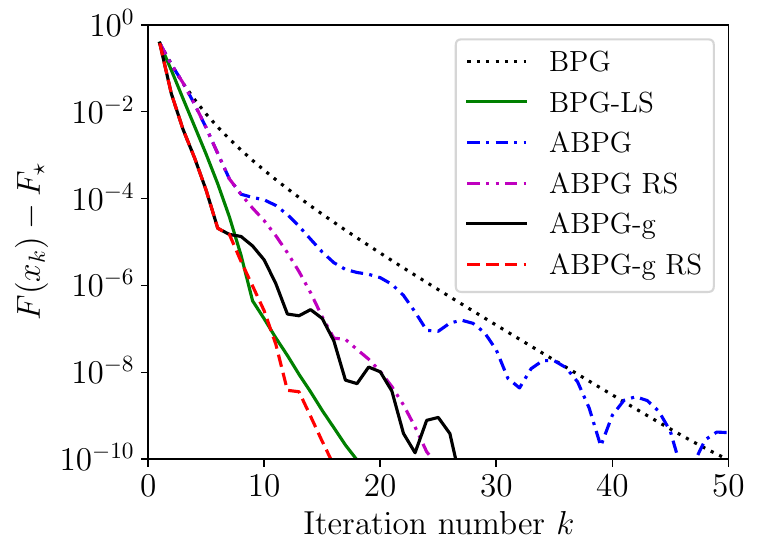}
  \hspace{1cm}
  \includegraphics[width=0.45\textwidth]{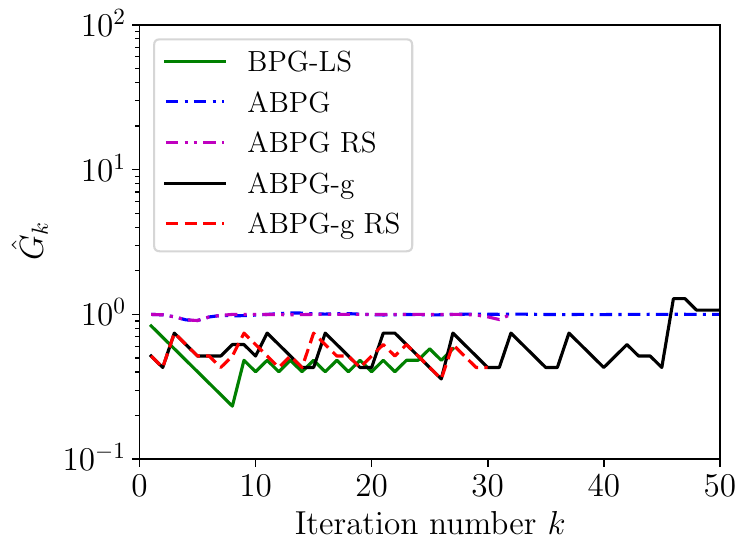}
  \caption{D-optimal design: random problem instance with $m=80$ and $n=120$.}
  \label{fig:D-opt-restart}
\end{figure}

Figure~\ref{fig:D-opt-adapt-lin}
compares the basic BPG and ABPG methods with their adaptive variants. 
The BPG-LS method is a variant of BPG equipped with the same adaptive 
line-search scheme in Algorithm~\ref{alg:ABPG-g} 
(see also \cite[Method~3.3]{Nesterov13composite}).
For all variants of ABPG, we set $\gamma=\gammain=2$.
For BPG-LS and ABPG-g, we set $\rho=1.5$ for adjusting the gain $G_k$.
The adaptive variants converged in fewer iterations than their respective basic versions.
Figure~\ref{fig:D-opt-adapt-log} shows the same results in log-log scale.
We can clearly see the different slopes of the BPG variants and ABPG 
variants, demonstrating their $O(k^{-1})$ and $O(k^{-2})$ convergence rates
respectively.
For ABPG-e, we started with $\gamma_0=3$ and it eventually settled down to
$\gamma=2$, which is reflected in its gradual change of slope in 
Figure~\ref{fig:D-opt-adapt-log}.

We also show the comparison in terms of CPU time in 
Figures~\ref{fig:D-opt-adapt-lin-time} and \ref{fig:D-opt-adapt-log-time}.
As remarked at the end of Section~\ref{sec:ABPG-expo}, the ABPG-e method only take a constant number more iterations than ABPG, thus its their comparison is very similar to the case with number of iterations.
For ABPG-g, the analysis in Section~\ref{sec:ABPG-gain} on page~\pageref{remark:ABPG-e-oracles} shows that the number of gradient calls and proximal computations is
roughly twice of the ABPG method with the same number of iterations.
This is exactly what we observe in Figures~\ref{fig:D-opt-adapt-lin-time} and \ref{fig:D-opt-adapt-log-time}.
Given such predictable scaling between number of iterations and CPU time, we only show comparisons in the number of iterations in the rest numerical experiments.

Figure~\ref{fig:D-opt-restart} shows the comparison of different methods
on another random problem instance with $m=80$ and $n=120$.
All methods converge much faster and reach very high precision.
In particular, BPG and BPG-LS look to have linear convergence.
This indicates that this problem instance is much better conditioned and the 
objective function may be strongly convex relative to Burg's entropy.
In this case, it is shown in \cite{LuFreundNesterov18} that the BPG method
attains linear convergence.
The ABPG and ABPG-g methods demonstrate periodic non-monotone behavior.
A well-known technique to avoid such oscillations and attain fast linear
convergence is to restart the algorithm whenever the function value starts
to increase \cite{ODonoghueCandes15restart}.
We applied restart (RS) to both ABPG and ABPG-g, which resulted in a much faster
convergence as shown in Figure~\ref{fig:D-opt-restart}.

%\begin{figure}[t]
%  \centering
%  \includegraphics[width=0.95\textwidth]{figures/Dopt_compareFW_m80}
%\vspace{-2ex}
%  \caption{D-optimal design: Comparison with FW with $m=80$.}
%  \label{fig:D-opt-FWm80}
%\end{figure}

\begin{figure}[p]
\begin{subfigure}[b]{0.33\linewidth}
  \centering
  \includegraphics[width=1.00\textwidth]{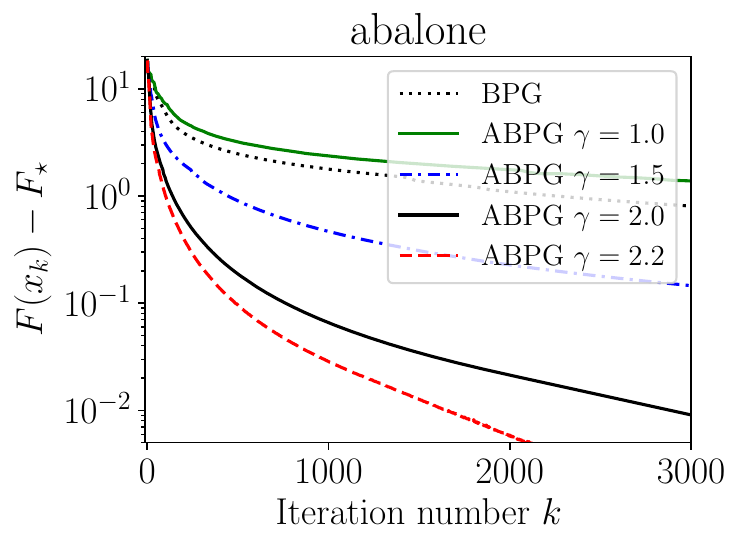}
  \label{fig:D-opt-gamma-obj-abalone}
\end{subfigure}%
%\begin{subfigure}[b]{0.25\linewidth}
%  \centering
%  \includegraphics[width=1.00\textwidth]{figures/abalone_gamma_gain}
%  \label{fig:D-opt-gamma-gain-abalone}
%\end{subfigure}
\begin{subfigure}[b]{0.33\linewidth}
  \centering
  \includegraphics[width=1.00\textwidth]{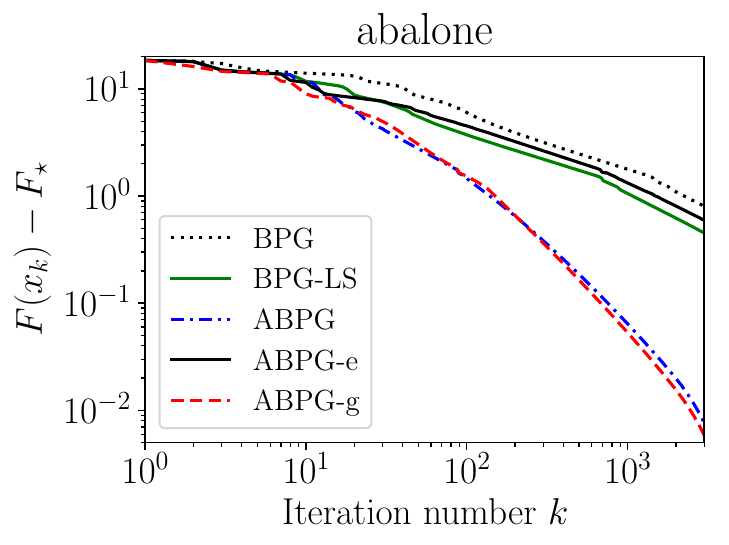}
  \label{fig:D-opt-adapt-obj-abalone}
\end{subfigure}%
\begin{subfigure}[b]{0.33\linewidth}
  \centering
  \includegraphics[width=1.00\textwidth]{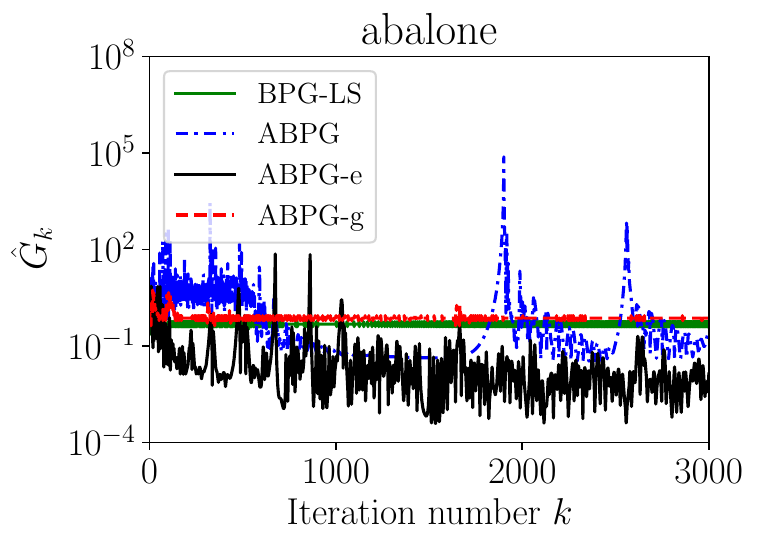}
  \label{fig:D-opt-adapt-gain-abalone}
\end{subfigure}

\begin{subfigure}[b]{0.33\linewidth}
  \centering
  \includegraphics[width=1.0\textwidth]{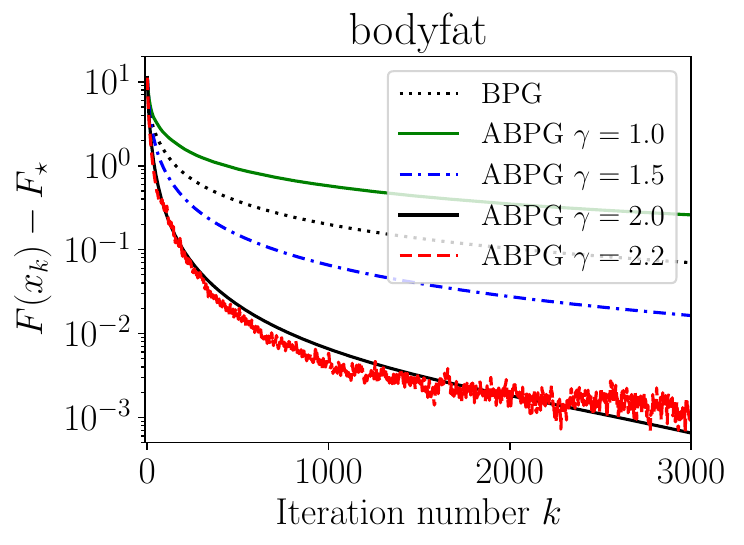}
  \label{fig:D-opt-gamma-obj-bodyfat}
\end{subfigure}%
%\begin{subfigure}[b]{0.25\linewidth}
%  \centering
%  \includegraphics[width=1.00\textwidth]{figures/bodyfat_gamma_gain}
%  \label{fig:D-opt-gamma-gain-bodyfat}
%\end{subfigure}
\begin{subfigure}[b]{0.33\linewidth}
  \centering
  \includegraphics[width=1.0\textwidth]{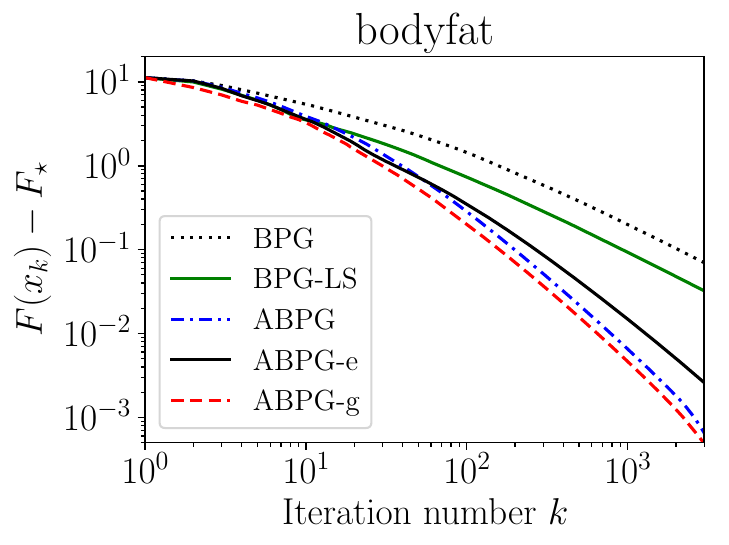}
  \label{fig:D-opt-adapt-obj-bodyfat}
\end{subfigure}%
\begin{subfigure}[b]{0.33\linewidth}
  \centering
  \includegraphics[width=1.0\textwidth]{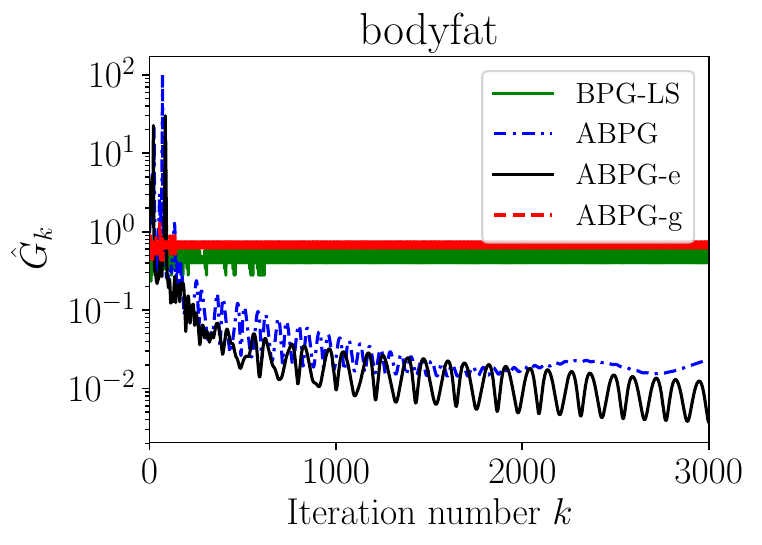}
  \label{fig:D-opt-adapt-gain-bodyfat}
\end{subfigure}

%\begin{subfigure}[b]{0.25\linewidth}
%  \centering
%  \includegraphics[width=1.00\textwidth]{figures/cpusmall_gamma}
%  \label{fig:D-opt-gamma-obj-cpusmall}
%\end{subfigure}%
%\begin{subfigure}[b]{0.25\linewidth}
%  \centering
%  \includegraphics[width=1.00\textwidth]{figures/cpusmall_gamma_gain}
%  \label{fig:D-opt-gamma-gain-cpusmall}
%\end{subfigure}
%\begin{subfigure}[b]{0.25\linewidth}
%  \centering
%  \includegraphics[width=1.00\textwidth]{figures/cpusmall_adapt_loglog}
%  \label{fig:D-opt-adapt-obj-cpusmall}
%\end{subfigure}%
%\begin{subfigure}[b]{0.25\linewidth}
%  \centering
%  \includegraphics[width=1.00\textwidth]{figures/cpusmall_adapt_gain}
%  \label{fig:D-opt-adapt-gain-cpusmall}
%\end{subfigure}%  

\begin{subfigure}[b]{0.33\linewidth}
  \centering
  \includegraphics[width=1.00\textwidth]{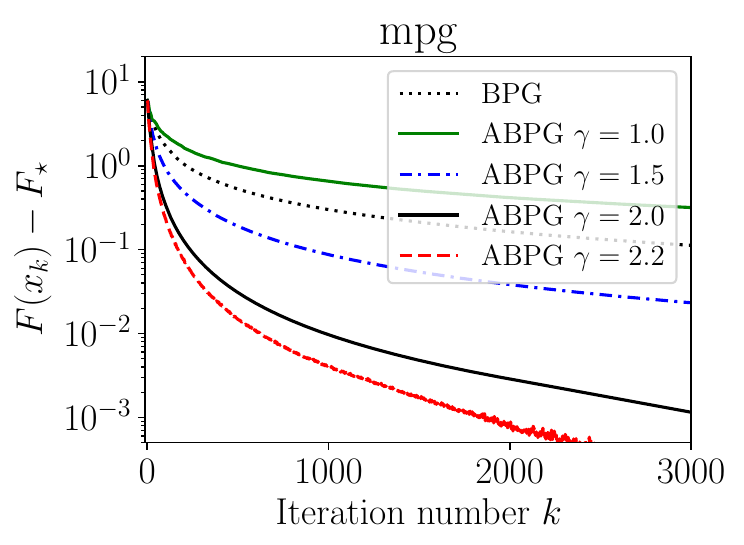}
  \label{fig:D-opt-gamma-obj-mpg}
\end{subfigure}%
%\begin{subfigure}[b]{0.25\linewidth}
%  \centering
%  \includegraphics[width=1.00\textwidth]{figures/mpg_gamma_gain}
%  \label{fig:D-opt-gamma-gain-mpg}
%\end{subfigure}
\begin{subfigure}[b]{0.33\linewidth}
  \centering
  \includegraphics[width=1.00\textwidth]{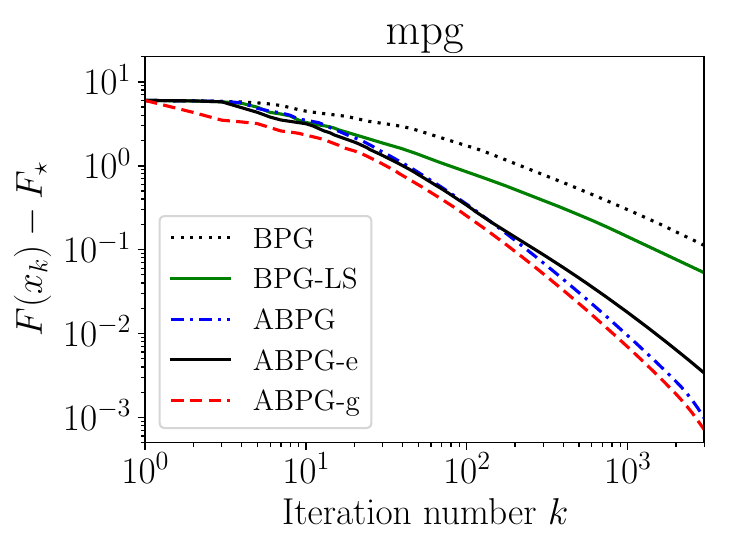}
  \label{fig:D-opt-adapt-obj-mpg}
\end{subfigure}%
\begin{subfigure}[b]{0.33\linewidth}
  \centering
  \includegraphics[width=1.00\textwidth]{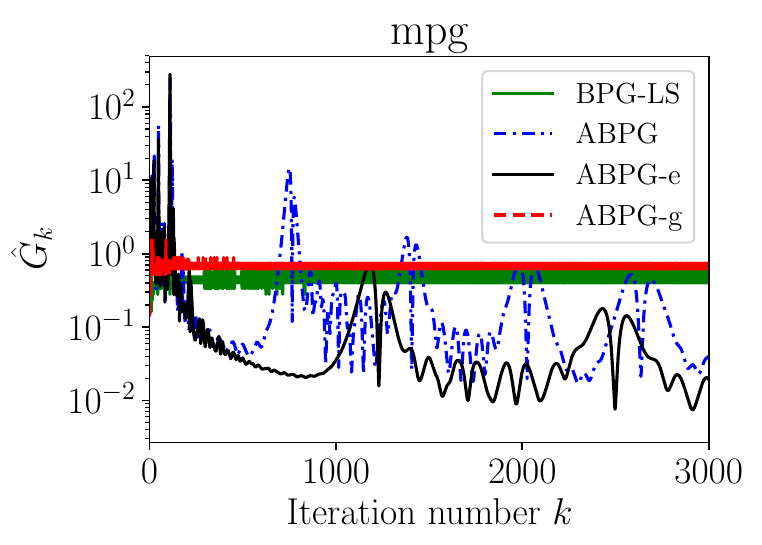}
  \label{fig:D-opt-adapt-gain-mpg}
\end{subfigure}

%  \hskip-0.3cm
\begin{subfigure}[b]{0.33\linewidth}
  \centering
  \includegraphics[width=1.00\textwidth]{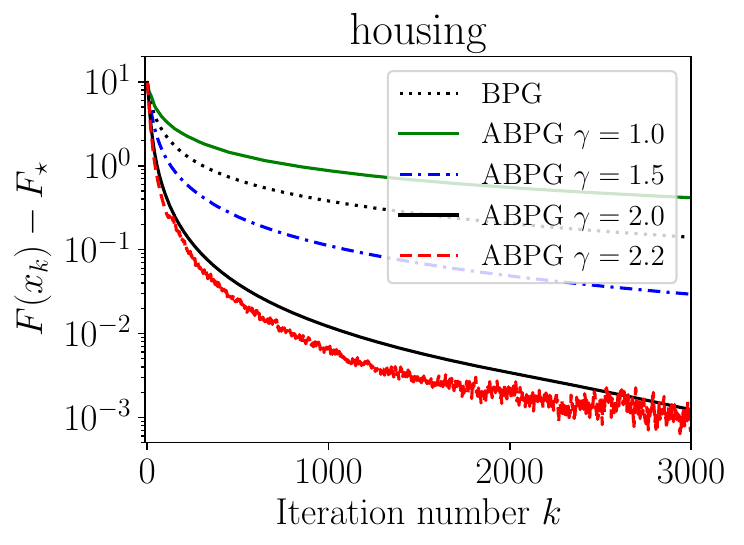}
  \label{fig:D-opt-gamma-obj-housing}
\end{subfigure}%
%\begin{subfigure}[b]{0.25\linewidth}
%  \centering
%  \includegraphics[width=1.00\textwidth]{figures/housing_gamma_gain}
%  \label{fig:D-opt-gamma-gain-housing}
%\end{subfigure}
\begin{subfigure}[b]{0.33\linewidth}
  \centering
  \includegraphics[width=1.00\textwidth]{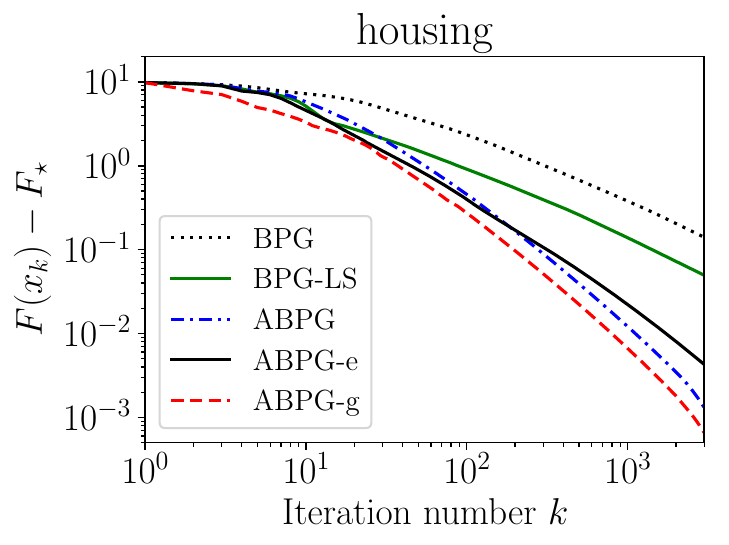}
  \label{fig:D-opt-adapt-obj-housing}
\end{subfigure}%
\begin{subfigure}[b]{0.33\linewidth}
  \centering
  \includegraphics[width=1.00\textwidth]{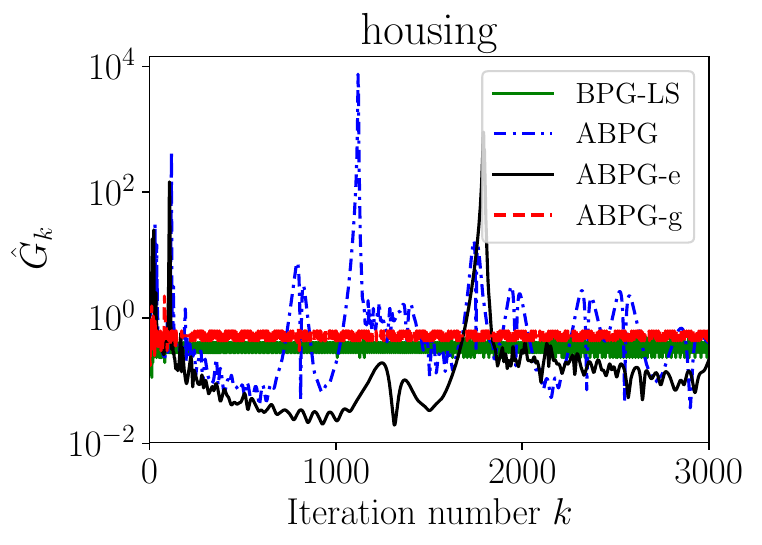}
  \label{fig:D-opt-adapt-gain-housing}
\end{subfigure}
\vspace{-1ex}
\caption{D-optimal design on several LibSVM datasets: 
\texttt{abalone}, % ($n=4177, m=8$),  
\texttt{bodyfat}, % ($n=252, m=14$), 
%\texttt{cpusmall} ($n=8192, m=12$), 
\texttt{mpg}      % ($n=392, m=7$) 
and \texttt{housing}.% ($n=506, m=13$). 
The first column compares the BPG method, ABPG method with $\gamma\in\{1.0,1.5,2.0,2.2\}$ and the ABDA method with $\gamma=2.0$, plotted in log-linear scale.
%and corresponding local triangle scaling gain $\widehat{G}_k$. 
The second column compares BPG, BPG-LS and three ABPG variants with $\gamma=2$, plotted in log-log scale, and the third column plots the corresponding local
triangle-scaling gains. 
For ABPG and ABPG-e, these are the $\hat{G}_k$ estimated using~\eqref{eqn:effective-gain};
For BPG-LS and ABPG-g, they are calculated as part of the gain adaptation schemes.
}
\label{fig:D-opt-sublinear_libsvm}
\end{figure}

\subsubsection{Experiment on real data}
In our second experiment, we construct D-optimal design instances from LibSVM data~\cite{chang2011libsvm}. In particular, we consider several regression datasets -- the goal is to find the most relevant data points where one shall run the experiment to evaluate the corresponding label.

Figure~\ref{fig:D-opt-sublinear_libsvm} shows the results on four different datasets: 
\texttt{abalone} ($n=4177, m=8$),  
\texttt{bodyfat} ($n=252, m=14$), 
%\texttt{cpusmall} ($n=8192, m=12$), 
\texttt{mpg} ($n=392, m=7$) and 
\texttt{housing} ($n=506, m=13$). 
The left column indicates that in each case, the best performance of ABPG is achieved with large TSE $\gamma = 2$ and $\gamma=2.2$. Furthermore, ABPG with $\gamma > 1$ always compared favorably over plain BPG. 
 
Next, the second column of Figure~\ref{fig:D-opt-sublinear_libsvm} shows that both ABPG-g and ABPG (with $\gamma=2$) always significantly outperform BPG and BPG-LS. We have chosen log-log scale of the plot to contrast the $O(k^{-1})$ convergence rate of BPG (with line search) with the $O(k^{-2})$ convergence rate of ABPG and ABPG-g. 
In the third column, we plot the local triangle-scaling gains. They serve as numerical certificates of the empirical $O(k^{-2})$ convergence rate of ABPG and its variants.
In particular, we see that $G_k$ for the ABPG-g algorithm is mostly flat and less than one.

\subsection{Poisson linear inverse problem}
\label{sec:Poisson}

\begin{figure}[t]
\begin{subfigure}[b]{0.5\linewidth}
  \centering
  \includegraphics[width=0.9\textwidth]{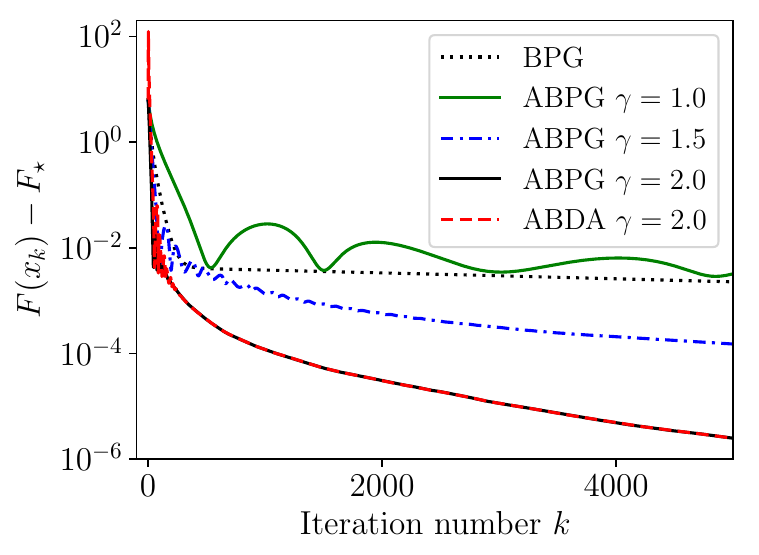}
  \caption{ABPG (varying $\gamma$) and ABDA ($\gamma=2$) methods.}
\vspace{2ex}
  \label{fig:Poisson-gamma-obj}
\end{subfigure}%
\begin{subfigure}[b]{0.5\linewidth}
  \centering
  \includegraphics[width=0.9\textwidth]{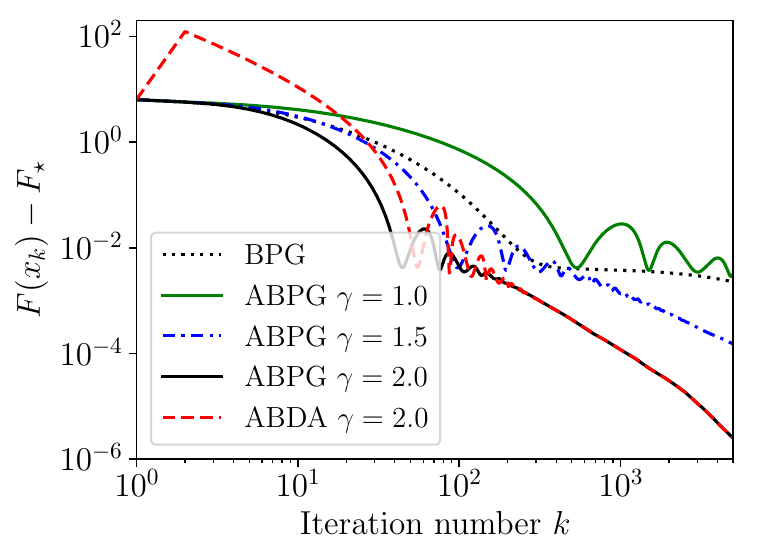}
  \caption{Same results in (a) in log-log plot.}
\vspace{2ex}
  \label{fig:Poisson-gamma-log}
\end{subfigure}
\begin{subfigure}[b]{0.5\linewidth}
  \centering
  \includegraphics[width=0.9\textwidth]{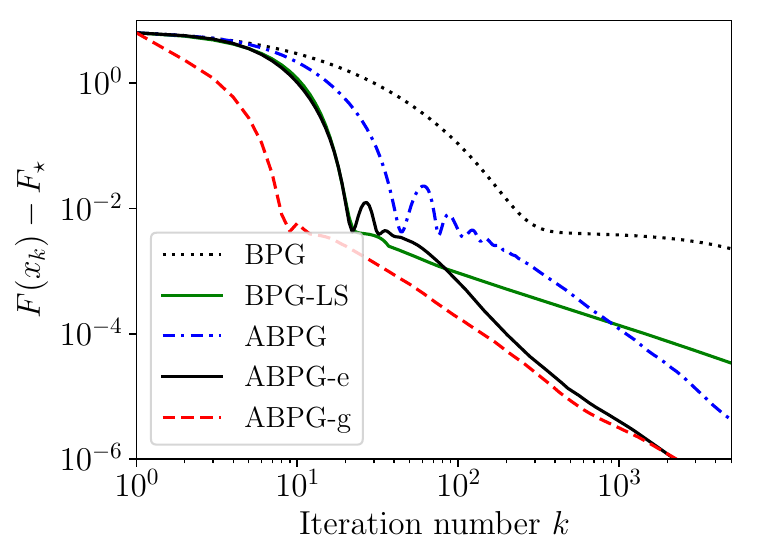}
  \caption{Adaptive ABPG methods $\gamma=2$ (log-log plot).}
  \label{fig:Poisson-adapt-log}
\vspace{1ex}
\end{subfigure}%
\begin{subfigure}[b]{0.5\linewidth}
  \centering
  \includegraphics[width=0.9\textwidth]{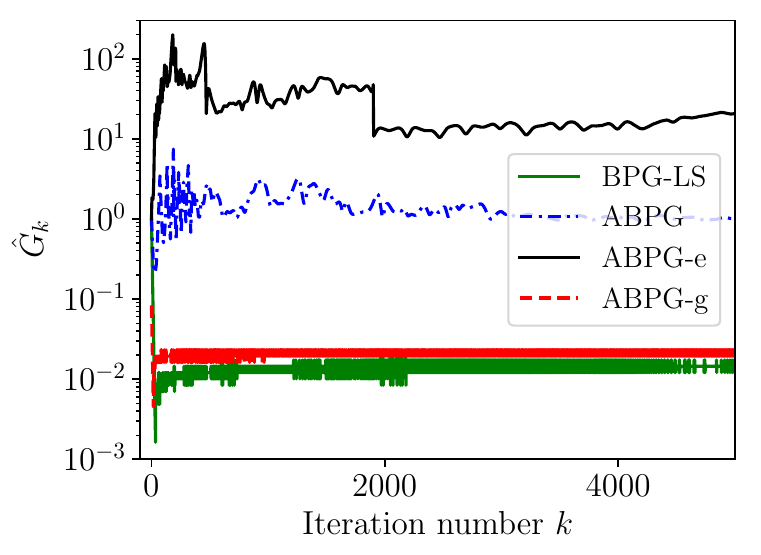}
  \caption{Local triangle-scaling gain $\widehat{G}_k$.}
  \label{fig:Poisson-adapt-gain}
\vspace{1ex}
\end{subfigure}
\caption{Poisson linear inverse problem: random instance with $m=200$ and $n=100$.}
\label{fig:Poisson-sublinear}
\end{figure}

\begin{figure}[t]
  \centering
  \includegraphics[width=0.45\textwidth]{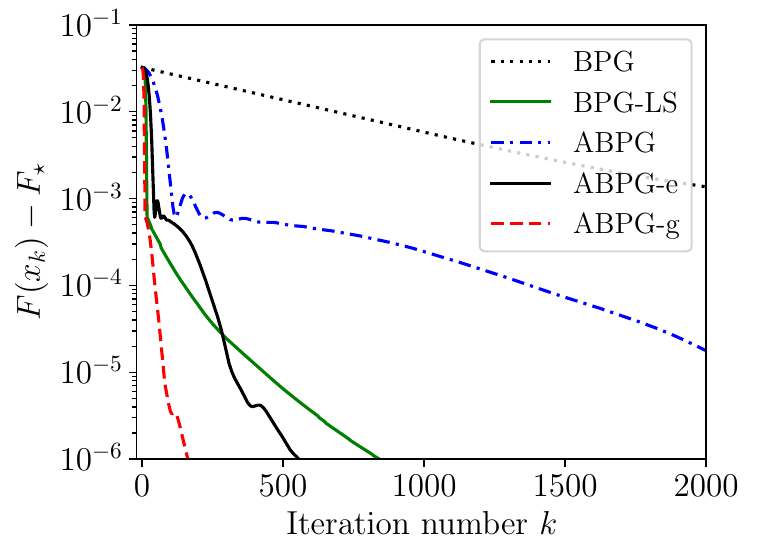}
  \hspace{0.5cm}
  \includegraphics[width=0.45\textwidth]{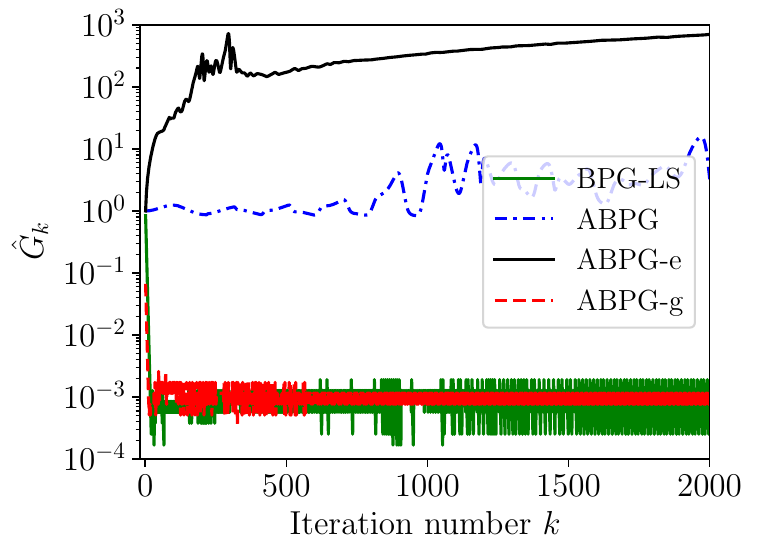}
  \caption{Poisson linear inverse problem: random instance with $m=100$ and
 $n=1000$.}
  \label{fig:Poisson-large-n}
\end{figure}

In Poisson inverse problems (e.g., \cite{Csiszar91,Bertero09}), 
we are given a nonnegative observation matrix $A\in\reals^{m\times n}_+$
and a noisy measurement vector $b\in\reals^m_{++}$, and the goal is to
reconstruct the signal $x\in\reals^n_+$ such that $Ax\approx b$.
A natural measure of closeness of two nonnegative vectors is the KL-divergence
defined in~\eqref{eqn:KL-divergence}. 
In particular, minimizing $\DKL(b,Ax)$ corresponds to maximizing the Poisson
log-likelihood function. 
We consider problems of the form
\[
\minimize_{x\in\reals^n_+} ~ F(x):=\DKL(b,Ax) + \Psi(x),
\]
where $\Psi(x)$ is a simple regularization function.
It is shown in \cite{BauschkeBolteTeboulle17} that the function
$f(x)=\DKL(b,Ax)$ is $L$-smooth relative to
$h(x)=-\sum_{i=1}^n \log (x^{(i)})$ on $\reals^n_+$
for any $L\geq\|b\|_1=\sum_{i=1}^m b^{(i)}$.
Therefore, in the BPG and ABPG methods, we use again the IS-distance $\DIS$
defined in~\eqref{eqn:IS-distance} as the proximity measure.

Figure~\ref{fig:Poisson-sublinear} shows our computational results for a
randomly generated instance with $m=200$ and $n=100$ and $\Psi\equiv 0$
(no regularization).
The entries of~$A$ and~$b$ are generated following independent uniform 
distribution over the interval $[0, 1]$.

Figure~\ref{fig:Poisson-gamma-obj} shows the reduction of objective gap
by BPG and ABPG with $\gamma=1.0$, $1.5$ and $2.0$, as well as the ABDA
method (Algorithm~\ref{alg:ABDA}).
ABPG and ABDA with $\gamma=2$ mostly overlap each other in this figure.
Figure~\ref{fig:Poisson-gamma-log} plots the same results in log-log scale,
which reveals that ABPG and ABDA (both with $\gamma=2$) behave quite 
differently in the beginning.
The ABDA method has a jump of objective value at $k=1$ because
$z_0\neq\argmin_{z\in C}h(z)$, and its convergence rate is governed by
part~(b) of Theorem~\ref{thm:ABDA-rate}.
In fact, for $C=\reals^n_{+}$, Burg's entropy 
$h(x)=-\sum_{i=1}^n \log (x^{(i)})$ is unbounded below as $\|x\|\to\infty$.
In contrast, for the D-optimal design problem in Section~\ref{sec:D-optimal},
$C$ is the standard simplex, and if we choose $z_0=x_0=(1/n,\ldots,1/n)$
then $z_0=\argmin_{z\in C} h(z)$.
In that case, we can show that ABPG and ABDA are equivalent when $\Psi\equiv 0$.

Figure~\ref{fig:Poisson-adapt-log} compares the basic and adaptive variants
of BPG and ABPG. 
For the ABPG and ABPG-g methods, we set $\gamma=\gammain=2$.
For ABPG-e, we start with $\gamma_0=3$, and the final $\gamma_k=2.8$ after
$k=5000$ iterations ($\delta=0.2$ in Algorithm~\ref{alg:ABPG-e}).
Although ABPG-e uses a much larger~$\gamma$ most of the time,
we see ABPG-g converges faster than ABPG-e in the beginning and they 
eventually become similar. 
This can be explained through the effective triangle-scaling gains plotted
in Figure~\ref{fig:Poisson-adapt-gain}.
For ABPG and ABPG-e, the effective gains plotted are $\widehat{G}_k$
defined in~\eqref{eqn:effective-gain}.
For BPG-LS and ABPG-g, we plot the $G_k$'s which are adjusted directly in 
the algorithms.
For ABPG-g, $G_k\approx 0.025$ most of the time.
The effective $\widehat{G}_k$ for ABPG-e is almost $1000$ times larger,
which counters the large value of~$\gamma$ used.
The sudden reduction of $\widehat{G}_k$ around $k=2000$ is when
$\gamma$ is reduced from~$3$ to $2.8$.
We expect $\gamma_k\to2$ as $k$ continues to increase.

Figure~\ref{fig:Poisson-large-n} shows the results for a randomly generated
instance with $m=100$ and $n=1000$. 
In this case, since $m<n$, we added a regularization 
$\Psi(x)=(\lambda/2)\|x\|^2$ with $\lambda=0.001$.
ABPG-g has the best performance. Again we observe that $G_k\ll 1$ most of the 
time, which gives a numerical certificate that the ABPG methods do converge 
with $O(k^{-2})$ rate.

\subsection{Relative-entropy nonnegative regression}

An alternative approach for solving the nonnegative linear inverse problem 
described in Section~\ref{sec:Poisson} is to minimize $\DKL(Ax,b)$, i.e.,
\[
\minimize_{x\in\reals^n_+} ~ F(x):=\DKL(Ax,b) + \Psi(x).
\]
In this case, 
it is shown in \cite{BauschkeBolteTeboulle17} that $f(x)=\DKL(Ax,b)$ is
$L$-smooth relative to the Boltzmann-Shannon entropy
$h(x)=\sum_{i=1}^n x^{(i)}\log (x^{(i)})$ on $\reals^n_+$
for any $L$ such that 
\[
L ~\geq~ \max_{1\leq j\leq n}\sum_{i=1}^m A_{ij}
~=~\max_{1\leq j\leq n}\|A_{:j}\|_1
\]
where $A_{:j}$ denotes the $j$th column of~$A$.
Therefore, in the BPG and ABPG methods, we use the KL-divergence $\DKL$
defined in~\eqref{eqn:KL-divergence} as the proximity measure.
In our experiment, we apply $\ell_1$-regularization
$\Psi(x)=\lambda\|x\|_1$ with $\lambda=0.001$.

Figure~\ref{fig:Nonneg-large-m} shows the results for a randomly generated
instance with $m=1000$ and $n=100$.
For all variants of the ABPG method, we set $\gamma=\gammain=2$.
Figure~\ref{fig:Nonneg-large-n} shows the results for a random instance
with $m=100$ and $n=1000$.
In this case, we clearly see linear convergence of the BPG and BPG-LS methods.
Since the accelerated methods demonstrate oscillations in objective value,
we tried the restart (RS) trick \cite{ODonoghueCandes15restart} 
and obtained faster convergence with apparent linear rate.
In contrast, ABPG methods with restart do not make any difference 
in~Figure~\ref{fig:Nonneg-large-m}.
Although not shown here, 
for the ABPG-g method, we always obtain small gains $G_k\leq 1$ at each step.
Therefore their geometric mean $\overline{G}_k$ is also small, which serves
as a certificate of the $O(k^{-2})$ convergence rate for this problem instance.

\begin{figure}[t]
\begin{subfigure}[b]{0.5\linewidth}
  \centering
  \includegraphics[width=0.9\textwidth]{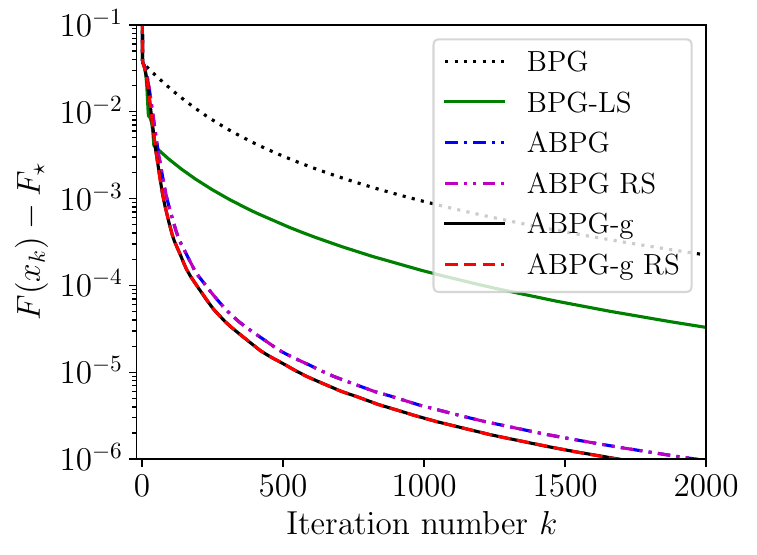}
  \caption{$m=1000$ and $n=100$.}
  \label{fig:Nonneg-large-m}
\end{subfigure}%
\begin{subfigure}[b]{0.5\linewidth}
  \centering
  \includegraphics[width=0.9\textwidth]{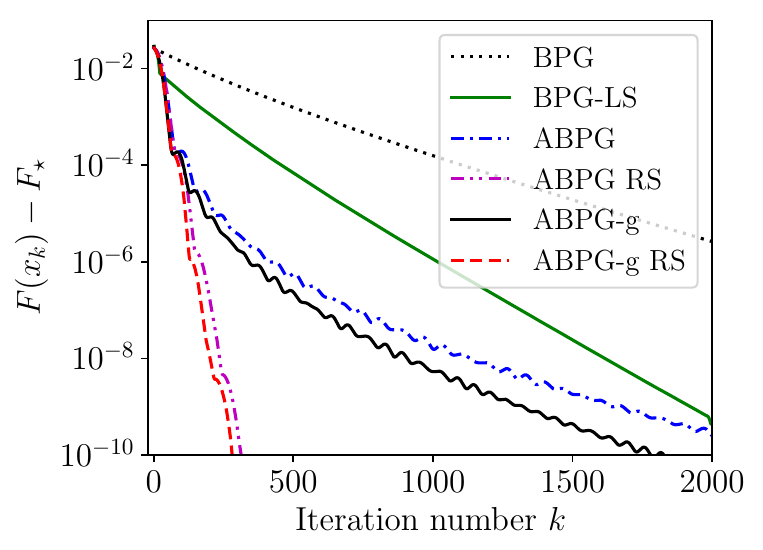}
  \caption{$m=100$ and $n=1000$.}
  \label{fig:Nonneg-large-n}
\end{subfigure}
  \centering
  \caption{Two random instances of relative entropy nonnegative regression
           ($\gamma=2$ for ABPG).}
  \label{fig:KL-regr-restart}
\end{figure}

\section*{Acknowledgments}
We thank Haihao Lu, Robert Freund and Yurii Nesterov for helpful conversations. 
%We are also grateful to the anonymous referees whose comments helped improving the clarity of the paper. 
Peter Richt{\'a}rik acknowledges the support of the KAUST Baseline Research Funding Scheme.

\bibliographystyle{abbrv}
\bibliography{ABPG}

\end{document}